% !TEX encoding = UTF-8
\documentclass[11pt,twoside]{preprint}
%%%%\usepackage{textcomp}
%%%%\usepackage[sb]{libertine}
%%%%\usepackage[varqu,varl]{inconsolata}% sans serif typewriter
%%%%\usepackage[libertine,bigdelims,vvarbb]{newtxmath} % bb from STIX
%%%%\usepackage[cal=boondoxo]{mathalfa} % mathcal
%%%%\useosf % osf for text, not math
%%%%\usepackage[supstfm=libertinesups,%
%%%%	supscaled=1.2,%
%%%%	raised=-.13em]{superiors}
%\usepackage[full]{textcomp}
%\usepackage[osf]{newtxtext}
%\usepackage{cabin}
%%\usepackage[varqu,varl]{inconsolata}
%\usepackage[cal=boondoxo]{mathalfa}
%\usepackage{microtype}

%commentaire de preuve

%\usepackage[bigdelims,vvarbb]{newtxmath}

\usepackage[full]{textcomp}
\usepackage{times}

\usepackage[cal=boondoxo]{mathalfa}
\usepackage{microtype}

\usepackage{amsmath}
\usepackage{amsfonts}
\usepackage{amssymb}
\usepackage[full]{textcomp}
\usepackage[osf]{newtxtext}
\usepackage[all]{xy}
\usepackage[utf8]{inputenc} 
\usepackage{comment}

\usepackage{hyperref}
\usepackage{breakurl}

\usepackage{mathrsfs}
\usepackage{enumitem}

\usepackage{tikz}
\usepackage{dsfont}
\usepackage{mhequ} 
\usepackage{mhsymb} 
\usepackage{mhenvs} 
\usepackage{array}

\usepackage{wasysym}
\usepackage{centernot}
\usepackage{mathtools}

\setlist{noitemsep,nolistsep,leftmargin=1.7em}
\parindent1em
 
\usetikzlibrary{snakes}
\usetikzlibrary{decorations}
\usetikzlibrary{positioning}
\usetikzlibrary{shapes}

\DeclareFontFamily{U}{mathx}{\hyphenchar\font45}
\DeclareFontShape{U}{mathx}{m}{n}{
      <5> <6> <7> <8> <9> <10>
      <10.95> <12> <14.4> <17.28> <20.74> <24.88>
      mathx10
      }{}
\DeclareSymbolFont{mathx}{U}{mathx}{m}{n}
\DeclareMathSymbol{\bigtimes}{1}{mathx}{"91}

\def\s{\mathfrak{s}}

\def\emptyset{{\centernot\ocircle}}

\definecolor{darkred}{rgb}{0.7,0.1,0.1}
\definecolor{darkblue}{rgb}{0.1,0.1,0.8}
\definecolor{darkgreen}{rgb}{0.1,0.7,0.1}
\setlength{\marginparwidth}{3cm}

\def\restr{\mathord{\upharpoonright}}

\providecommand{\figures}{false}
{ \ifthenelse{\equal{\figures}{false}} {#1}{\[ {\rm Figure \ missing !} \]} }{}
\def\id{\mathrm{id}}

\def\Labo{\mathfrak{o}}

\def\CH{\mathcal{H}}

\def\CG{\mathcal{G}}
\def\CJ{\mathcal{J}}
\def\CA{\mathcal{A}}

\def\CM{\mathcal{M}}
\def\CT{\mathcal{T}}
\def\RR{\mathfrak{R}}

\def\Labe{\mathfrak{e}}

\def\Labn{\mathfrak{n}}

\def\Labhom{\mathfrak{t}}
\def\Lab{\mathfrak{L}}
\def\Adm{\mathfrak{A}}

\def\Deltam{\Delta^{\!-}}

\def\Deltap{\Delta^{\!+}}

\def\${|\!|\!|}

\def\Co{\mathscr{C}}
\def\DeltaB{\Delta^{\!M^{\circ}}}
\def\DeltaM{\Delta^{\!M}}

\def\DeltaQ{\hat \Delta_{1}}

\newcommand{\mfL}{\mathfrak{L}}

\newcommand{\mfl}{\mathfrak{l}}

\def\Trees{\mathfrak{T}}

\def\Forests{\mathfrak{F}}

%{\sscal{\Trees}}
\def\Tra{\mathfrak{F}}%{\sscal{\Trees}}

\def\CR{\mathcal{R}}
\def\CF{\mathcal{F}}

\newenvironment{DIFnomarkup}{}{} % see man latexdiff

\newcommand{\I}{{\mathcal I}}

\newfont{\indic}{bbmss12}

\def\un#1{\hbox{{\indic 1}$_{#1}$}}

\def\Symb{\mathrm{Symb}}

\def\PPi{\boldsymbol{\Pi}}

%%%%%%%%%%%%%%%%%%%%%%%%%%%%%%%%%%%%%%%%%%%%%%%%%%%%%%%%
%
%
%              Some tikz code to draw nice trees
%
%
%%%%%%%%%%%%%%%%%%%%%%%%%%%%%%%%%%%%%%%%%%%%%%%%%%%%%%%%

\colorlet{symbols}{blue!90!black}
\colorlet{testcolor}{green!60!black}
\colorlet{connection}{red!30!black}
\def\symbol#1{\textcolor{symbols}{#1}}
\def\symbol#1{\textcolor{symbols}{#1}}

\tikzset{
root/.style={circle,fill=black!50,inner sep=0pt, minimum size=3mm},
        circ/.style={circle,fill=white,draw=black,very thin,inner sep=.5pt, minimum size=1.2mm},
        dot/.style={circle,fill=black,inner sep=0pt, minimum size=1.2mm},
        dotred/.style={circle,fill=black!50,inner sep=0pt, minimum size=2mm},
        var/.style={circle,fill=black!10,draw=black,inner sep=0pt, minimum size=3mm},
        kernel/.style={semithick,shorten >=2pt,shorten <=2pt},
        kernel1/.style={thick},
        kernels/.style={snake=zigzag,shorten >=2pt,shorten <=2pt,segment amplitude=1pt,segment length=4pt,line before snake=2pt,line after snake=5pt,},
		kernels1/.style={snake=zigzag,segment amplitude=0.5pt,segment length=2pt},
		rho1/.style={densely dotted,semithick},
        rho/.style={densely dashed,semithick,shorten >=2pt,shorten <=2pt},
           testfcn/.style={dotted,semithick,shorten >=2pt,shorten <=2pt},
        renorm/.style={shape=circle,fill=white,inner sep=1pt},
        labl/.style={shape=rectangle,fill=white,inner sep=1pt},
        xic/.style={very thin,circle,fill=symbols,draw=black,inner sep=0pt,minimum size=1.2mm},
        xi/.style={very thin,circle,fill=blue!10,draw=black,inner sep=0pt,minimum size=1.2mm},
	xib/.style={very thin,circle,fill=blue!10,draw=black,inner sep=0pt,minimum size=1.6mm},
	xie/.style={very thin,circle,fill=green!50!black,draw=black,inner sep=0pt,minimum size=1mm},
	xid/.style={very thin,circle,fill=symbols,draw=black,inner sep=0pt,minimum size=1.6mm},
	edgetype/.style={very thin,circle,draw=black,inner sep=0pt,minimum size=5mm},
	nodetype/.style={very thick,circle,draw=black,inner sep=0pt,minimum size=5mm},
	kernels2/.style={very thick,draw=connection,segment length=12pt},
clean/.style={thin,circle,fill=black,inner sep=0pt,minimum size=1mm},	not/.style={thin,circle,fill=symbols,draw=connection,fill=connection,inner sep=0pt,minimum size=0.8mm},
	>=stealth,
        }

\tikzset{ individus/.style={scale=0.40,draw,circle,thick,fill=black!10},
 individu/.style={scale=0.40,draw,circle,thick,fill=black!50},       } 
\makeatletter
\def\DeclareSymbol#1#2#3{\expandafter\gdef\csname MH@symb@#1\endcsname{\tikz[baseline=#2,scale=0.15,draw=symbols]{#3}}\expandafter\gdef\csname MH@symb@#1s\endcsname{\scalebox{0.7}{\tikz[baseline=#2,scale=0.15,draw=symbols]{#3}}}}
\def\<#1>{\csname MH@symb@#1\endcsname}
\makeatother

 \def\1{\mathbf{\symbol{1}}}

 \def\un#1{\hbox{{\indic 1}$_{#1}$}}
\def\one{\mathbf{1}}

\setlength{\headheight}{15pt}

\pagenumbering{arabic}

\DeclareMathAlphabet{\mathpzc}{OT1}{pzc}{m}{it}

\def\un#1{\hbox{{\indic 1}$_{#1}$}}
\def\one{\mathbf{1}}

\def\Deltap{\Delta^{\!+}}

\def\Deltam{\Delta^{\!-}}

\def\PPi{\boldsymbol{\Pi}}

\def\id{\mathrm{id}}

\def\simnot{\stackrel{\vbox to 0.15em{\hbox{\kern0.07em$^\circ$}}}{\sim}}

\begin{document}

\title{Recursive formulae in regularity structures}
\author{Y. Bruned}
\institute{Imperial College London \\
\email{y.bruned@imperial.ac.uk} 
}
\maketitle

\begin{abstract} We construct  renormalised models of regularity structures by using a recursive formulation for the structure group and for the renormalisation group. This construction covers all the examples of singular SPDEs which have been treated so far with the theory of regularity structures and improves the renormalisation procedure based on Hopf algebras given in Bruned-Hairer-Zambotti (2016).
\end{abstract}

\setcounter{tocdepth}{1}
\tableofcontents

\section{Introduction}

During the last years, the theory of regularity structures introduced by Martin Hairer in \cite{reg} has proven to be an essential tool for solving singular SPDEs of the form:
\begin{equs}\label{standard equation}
(\partial_t u_i - \Delta u_i) = \sum_{j = 1}^M F^{j}_{i}(u,\nabla u) \xi_j, \quad 1 \leq i \leq N, \; 1 \leq j \leq M.
\end{equs}
where the $ \xi_j $ are space-time noises and 
the $ F_{i}^j $ are non-linearities depending 
on the solution and its spacial derivatives.
A complete black box has been set up in the series of papers \cite{reg}, \cite{BHZ}, \cite{CH} and 
\cite{BCCH} covering all the equations treated so far including the generalised KPZ equation, describing the most natural evolution on loop space see \cite{KPZg}. 

Let us briefly summarise the content of this theory. Since \cite{Lyons91}, the rough path approach is a way to study
SDEs driven by non-smooth paths with an enhancement of the underlying path
which allows to recover continuity of the solution map. In the case of SPDEs the
enhancement is represented by a model $ (\Pi,\Gamma) $, to which is associated a space of
local Taylor expansions of the solution with new monomials, coded by an abstract
space $\CT$ of decorated trees. These expansions can be viewed as an extension of the controlled rough paths introduced in 
\cite{Gubinelli2004} which are quite efficient for solving singular SDEs. The main idea is to have a local control
of the behaviour of the solution at some base point. For that, one needs a recentering procedure and a way to act on the coefficients when we change the base point. This action is performed by elements of the structure group $ \CG_+ $ introduced in \cite{reg}.  

Then the resolution procedure of \ref{standard equation}  works as follows. One first mollifies the noises $ \xi_j^{(\varepsilon)} $ and constructs canonically its mollified model $ (\Pi^{(\varepsilon)},\Gamma^{(\varepsilon)}) $. In most situations this mollified model fails to converge because of the potentially ill-defined
products appearing in the right hand side of the equation \ref{standard equation}. Therefore one needs to modify the model to obtain convergence. This is where renormalisation enters the picture. The renormalisation group for the space of models has been originaly described in \cite{reg} but its construction is rather implicit and some parts have to be achieved by hand. This formulation has been used in the different works \cite{reg,wong,woKP,HS15,Hos16,SX16}. 

In  \cite{BHZ}, the authors have constructed an explicit subgroup $ \CG_- $ with Hopf algebra techniques. This group gives an explicit formula for the renormalised model and paves the way for the general convergence result obtained in \cite{CH} for a certain class of models called BPHZ models.

The main aim of this paper is to provide the reader with a direct and an easy construction of the renormalisation model without using all the Hopf algebra machinery. For that purpose, we give a recursive description of the renormalised map $M$ acting on a class of decorated trees: $ M = M^{\circ} R $. The map $ R $ performs a local renormalisation whereas the map $ M^{\circ} $ propagates $ R $ inside the decorated tree. In order to obtain a nice expression of the renormalised model in \cite{BHZ}, the authors use a co-interaction property, described in the context of B-series in \cite{CHV,MR2657947,MR2803804},  between the Hopf algebra for the structure group and the one  for the renormalisation group. This co-interaction gives powerful results but one has to work with extended decorations see \cite{BHZ} for the definitions. In that context, the renormalised model is given in \cite[Thm 6.15]{BHZ} by:
\begin{equs} \label{nice action}
\Pi_x^M = \Pi_x M, \quad  \Gamma_{xy}^{M} = \left( \id \otimes \gamma_{xy}^{M} \right) \Deltap, \quad \gamma_{xy}^{M} = \gamma_{xy} M.
\end{equs}
where $ \gamma_{xy} \in \CG_+ $ and $ \Deltap $ is the coproduct associated to the Hopf algebra describing $ \CG_+ $.
 If we want to define a renormalised model with a map of the form $  M = M^{\circ} R $ a new algebraic property is needed. The main assumption is that $ R $ commutes with the structure group which is not true in general for $ M $.
Gathering other properties on $ R $, this allows us to define a renormalised model with an explicit recursive expression on trees. This expression is given by:
\begin{equs} \label{nice action 2}
\Pi_x^M = \Pi_x^{M^{\circ}} R, \quad \Pi_x^{M^{\circ}} \tau \bar \tau = \Pi_x^{M^{\circ}} \tau \; \Pi_x^{M^{\circ}}\bar \tau, \quad \tau, \bar \tau \in \CT,
\end{equs}
and the rest of the definition of the maps
$ \Pi^{M}_x $ and $ \Pi^{M^{\circ}}_x $ is the same as the one given in \cite{reg} for admissible models. The identities \ref{nice action 2} mean that $ R $ is needed as an intermediary step before recovering the multiplicativity of the model.
 By not using the extended decoration, we are not able to give a nice expression of the action of $ M $ on the structure group like in \ref{nice action}. We circumvent this difficulty by providing a relation between $ \Gamma_{xy} $ and $ \Pi_{x} $ which works for smooth models. Thus it is enough to know
$ \Pi^M_x $ in order to define $ \Gamma^{M}_{xy} $.
 We also prove that this construction is more general than the one given by the Hopf algebra which means that all the examples treated so far are under the scope of this new formulation. 
For proving this fact, we use a factorisation
of the coproducts describing the renormalisation. The main idea is to separate the renormalisation happening at the root from the ones happening inside the tree following the steps of\ref{nice action 2}.  This representation allows us to derive a recursive definition of the coproduct extending the one giving in \cite{reg} and covering the two coproducts used for $ \CG_+ $ and $ \CG_- $.
This approach of decomposing complex coproducts into elementary steps echoes the use of pre-Lie structures through grafting operators in \cite{BCFP,BCCH}. In both cases, the caracterisation of the adjoint $ M^{*} $ of $ M $ as a morphism for the grafting operator appears to be a nice way for describing the renormalised
equation.

Finally, let us give a short review of the content of this paper.
In section~\ref{section 2}, we present the main notations
needed for the rest of the paper and we give a recursive construction of the structure group. We start with the recursive formula given in \cite{reg} as a definition and we carry all the construction of the group by using it.
In section~\ref{section 3}, we do the same for the renormalisation group by introducing the new recursive definition described above. We then present the construction of the renormalised model.
In section~\ref{section 4}, we show that the group given in \cite{BHZ} is a particular case of section~\ref{section 3}
 and we derive a recursive formula for the coproducts.
 In section~\ref{section 5}, we illustrate the construction through some classical singular SPDEs and we rank these equations according to their
 complexity by looking at some properties the renormalised model does or does not satisfy. 
  In the appendix, we show that  some 
of the coassiociativity proofs given in \cite{BHZ} can be recovered 
by using the recursive formula for the coproducts.  

\subsection*{Acknowledgements}
{\small
The author is very grateful to Christian Brouder, Martin Hairer, Dominique Manchon and Lorenzo Zambotti for interesting discussions on the topic.
The author also thanks Martin Hairer for financial support from Leverhulme Trust leadership award.
}

\section{Structure Group}
\label{section 2}

In this section, after presenting the correspondence between trees and symbols we provide an alternative construction of the structure group  using recursive formulae and we prove that this construction coincides with the one described in \cite[Sec.~8]{reg}.
\subsection{Decorated trees and symbolic notation}
In this subsection, we recall mainly the notations on decorated trees introduced in \cite{BHZ}.
Let fix a finite set $ \mfL $ of types and $ d \geq 0$ be the space dimension. We consider $ \Trees  $ the space of decorated trees such that every  
$ T^{\Labn}_{\Labe} \in \Trees $ is formed of:
\begin{itemize}
\item an underlying rooted tree $ T $ with node set $ N_T $, edge set $ E_T $ and root $ \rho_{T} $. To each edge $ e \in E_T $, we associate a type $ \Labhom(e) \in \mfL  $ through a map $ \Labhom : E_T \rightarrow \mfL$. 
\item a node decoration $ \Labn : N_T \rightarrow \N^{d+1} $ and an edge decoration $ \Labe : E_T \rightarrow \N^{d+1} $.
\end{itemize} 
Let fix $  B_{\circ} \subset \Trees $ a family of decorated trees and we denote by $ \CT $ its linear span.
Let $ \s \in \N^{d+1} $ a space-time scaling and  a degree assignment $ | \cdot|_{\s} : \mfL \rightarrow \R \setminus \lbrace 0 \rbrace $. We then associate 
to each decorated tree a degree $  | \cdot |_{\s} $. For $ T^{\Labn}_{\Labe} $ we have:
\begin{equs}
| T^{\Labn}_{\Labe} |_{\s} = \sum_{e \in E_T} ( |\Labhom(e)|_{\s}  - | \Labe(e)|_{\s} ) + \sum_{x \in N_T} | \Labn(x) |_{\s},
\end{equs}
where for $ k \in \N^{d+1} $, $   |k |_{\s} = \sum_{i=0}^d \s_i k_i $ and for $ v \in \Z(\mfL), $, $ |v |_{\s} = \sum_{\Labhom \in \mfL} v_{\Labhom} | \Labhom |_{\s} $  . We now introduce the symbolic notation following \cite[Sec~4.3]{BHZ}:
 \begin{enumerate}
 \item An edge of type $ \mfl $ such that
 $ | \mfl |_{\s} < 0$ is a noise and if it has a zero edge decoration is denoted by $ \Xi_{\mfl} $. We assume that the elements of $ B_{\circ} $ contain noise edges with a decoration equal to zero.
 \item An edge of type $ \Labhom $ such that
 $ | \Labhom |_{\s} > 0$ with decoration
 $ k \in \N^{d+1} $ is an abstract integrator and is denoted by $ \CI^{\Labhom}_{k} $. The symbol $ \CI^{\Labhom}_{k} $ is also viewed as  the operation that grafts a tree onto a new root via a new edge with edge decoration $k$
and type $ \Labhom $.
 \item A factor $ X^k $ encodes the decorated tree $ \bullet^{k} $ with $ k \in \N^{d+1} $ which is the tree composed of a single node and a node decoration equal to $ k $. We write $ X_i $ for $ i \in \lbrace 0,...,d \rbrace $
 as a shorthand notation for $ X^{e_i} $ where the $ e_i $ form the canonical basis of $ \N^{d+1} $. The element $ X^0 $ is denoted by $ \one $.
 \end{enumerate}
 The degree $  | \cdot |_{\s} $ creates a splitting among $ \mfL = \mfL_+ \sqcup \mfL_{-}$ where $ \mfL_+ $ is the set of abstract integrators and $ \mfL_- $ is the set of noises.
 The decorated trees $ X^k $ and $ \Xi_{\mfl} $ can be viewed as linear operators on $ \CT $ through the tree product. This product is defined for two decorated trees $ T_{\Labe}^{\Labn} $, $ \tilde{T}_{\tilde{\Labe}}^{\tilde{\Labn}} $ by $ \bar{T}_{\bar \Labe}^{\bar \Labn} = T_{\Labe}^{\Labn}  \tilde{T}_{\tilde{\Labe}}^{\tilde{\Labn}}$
 where $ \bar{T} = T \tilde{T} $ is the tree obtained by identifying $ \rho_{T} $ and $ \rho_{\tilde{T}} $, $ \bar \Labn $ is equal to $ \Labn $ on $ N_{T} \setminus \lbrace \rho_{T} \rbrace $ and to $ \tilde{\Labn} $ on  $ N_{\tilde{T}} \setminus \lbrace \varrho_{\tilde{T}} \rbrace $ with $ \bar  \Labn(\varrho_{\bar T}) =  \Labn(\varrho_{ T}) +
 \tilde \Labn(\varrho_{ \tilde{T}})   $, the edge decoration $ \bar \Labe $ coincides with $ \Labe $ on $ E_T $ and with $ \tilde{\Labe} $ on $ E_{\tilde{T}} $.

   We suppose that the family $ B_{\circ} $ is strongly conforming to a normal complete rule $ \mathcal{R} $ see \cite[Sec 5.1]{BHZ} which is subcritical as defined in \cite[Def. 5.14]{BHZ}. As a consequence the set $ B_{\alpha} = \lbrace \tau\in B_{\circ}  : \; |\tau |_{\s} = \alpha \rbrace $ is finite for every $ \alpha \in \R $ see  \cite[Prop. 5.15]{BHZ}.  We denote by $ \CT_{\alpha} $ the linear span of $ B_{\alpha} $. 
   
   For the sequel, we introduce another family
   of decorated trees $  B_+ $ which conforms to the rule $ \mathcal{R} $. This means that $ B_{\circ} \subset B_+ $ and we have no constraints on the product at the root. Therefore, $ B_+ $ is stable under the tree product. Then, we consider a disjoint copy $ \bar B_+ $ of $ B_+ $ such that $ B_{\circ} \nsubseteq \bar B_+ $ and we denote by $ \hat \CT_{+} $ its linear span. Elements of $ \bar B_+ $ are denoted by $ (T,2)^{\Labn}_{\Labe} $ where $ T^{\Labn}_{\Labe} \in  B_+ $. Another way to distinguish the two spaces is to use colours as in \cite{BHZ}. The $ 2 $ in the notation means that the root of the tree has the colour $ 2 $ and the other nodes are coloured by $ 0 $.    If the root is not coloured by $ 2 $, we denote the decorated tree as $ (T,0)^{\Labn}_{\Labe}= T^{\Labn}_{\Labe} $. The product on $ \hat \CT_+ $ is the tree product in the sense that the product between $ (T,2)^{\Labn}_{\Labe} $ and $ (\tilde{T},2)^{\tilde{\Labn}}_{\tilde{\Labe}} $ is given by $ (\bar{T},2)^{\bar{\Labn}}_{\bar{\Labe}} $.
   We use a different symbol for the edge incident to a root in $ \hat \CT_+ $  having the type $ \Labhom $ and the decoration $ k $: $\hat  \CJ^{\Labhom}_k $ which can be viewed as an operator from $  \CT $ to $ \hat \CT_{+} $ . 
   The space on which we will define a group in the next subsection is $ \CT_{+}  = \hat \CT_{+} / \CJ_{-} $ where $  \CJ_{-}  $ is the ideal of $ \hat \CT_+ $ generated by 
$ \lbrace \hat \CJ_k^{\Labhom}(\tau), \; \tau \in B_{\circ} \; : \; | \hat \CJ_k^{\Labhom}(\tau) |_{\s}\leq 0 \rbrace $. 
We denote by $ \Pi_+ : \hat \CT_+ \rightarrow \CT_+ $ the canonical projection and $  \CJ^{\Labhom}_k $  the operator from $ \CT $ to $ \CT_+ $ coming from $ \hat \CJ^{\Labhom}_k $.
    
 We want a one-to-one correspondence between decorated trees and certain algebraic expressions.
For a decorated tree $ T_{\Labe}^{\Labn} $, we give a mapping to the symbolic notation in the sense that every tree can be obtained as products, compositions of the symbols $ \CI_k^{\Labhom} $, $ X^k $ and $ \Xi_{\mfl} $ defined above.  We first decompose $  T_{\Labe}^{\Labn} $ into a product of planted trees which are trees of the form $  \CI_{k}^{\Labhom}\left( \tau \right)  $ or $ \Xi_{\mfl} $. Planted trees of $ B_{\circ} $ are denoted by $ \hat B_{\circ} $. The tree $T_{\Labe}^{\Labn}   $ has a unique factorisation of the form:
\begin{equ} \label{e:decomptree}
T_{\Labe}^{\Labn}  = \bullet^{n} \tau_1 \tau_2\cdots  \tau_m\;,
\end{equ}
where each $ \tau_i $ is planted. Then, we define a multiplicative map for the tree product $ \Symb $ given inductively by:
\begin{equs}
\Symb(  \bullet^{n} )  = X^n, \quad \Symb( \Xi_{\mfl} ) =  \Xi_{\mfl}, \quad \Symb(\CI_{k}^{\Labhom}\left( \tau \right)   ) =  \CI_{k}^{\Labhom}\left( \Symb(\tau) \right).  
\end{equs}
For an element $ (T,2)^{\Labn}_{\Labe} \in \CT_+$ , we have the same decomposition \eqref{e:decomptree} but now each of the $ \tau_i  $ must be of positive degree which imply that they are of the form $ \CI_k^{\Labhom}(\tau) $. Then we define another multiplicative map $\Symb^{+}$ by:
\begin{equs}
\Symb^{+}( \bullet^{n}) = X^n, \quad \Symb^{+}(\CJ_{k}^{\Labhom}\left( \tau \right)   ) =  \CJ_{k}^{\Labhom}\left( \Symb(\tau) \right)
\end{equs}
where we have identified $  (\bullet,2)^{n} $ with
$ \bullet^{n} $.
 With this map, a decorated tree $ (T,2)^{\Labn}_{\Labe} \in \CT_+ $
is of the form 
\begin{equs}
 X^{n}  \CJ_{k_1}^{\Labhom_1}(T_1) \cdots \CJ_{k_m}^{\Labhom_m}(T_m), \quad T_i \in B_{\circ}, \quad | \CI_{k_i}^{\Labhom_i}(T_i) |_{\s} > 0.
\end{equs}

Until the section \ref{section 4}, we will use only the symbolic notation in order to construct the regularity structures and the canonical model associated to it.

\subsection{Recursive formulation}

In \cite[Prop. 5.39]{BHZ}, $ \CT $ generated by a  subcritical and normal complete rule $ \mathcal{R} $ gives a regularity structure $ \mathscr{T}_{\mathcal{R}} $. We first recall its definition from  \cite[Def. 2.1]{reg}.

\begin{definition} A triple $ \mathscr{T} = 
(A,\CH,G) $ is called a regularity structure
with model space $ \CH $ and structure group $ G $ if
\begin{itemize}
\item $ A \subset \R $ is bounded from below without accumulation points.
\item The vector space $ \CH = \bigoplus_{\alpha \in A} \CH_{\alpha} $ is graded by $ A $ such that each $ \CH_{\alpha} $ is a Banach space.
\item The group $ G $ is a group of continuous
operators on $ \CH $ such that, for every $ \alpha \in A $, every $ \Gamma \in G $ and every
$ \tau \in \CH_{\alpha} $, one has
\begin{equs}
\Gamma \tau - \tau \in \bigoplus_{\beta < \alpha} \CH_{\beta}.
\end{equs}
\end{itemize}
\end{definition}

For $ \mathscr{T}_{\mathcal{R}} $, we get
\begin{equs}
A = \lbrace | \tau |_{\s} : \; \tau \in B_{\circ} \rbrace, \quad \CH = \CT, \quad \CH_{\alpha} = 
\CT_{\alpha}.
\end{equs}
Concerning the structure group, the aim of the rest of this section is to provide a recursive construction using the symbolic notation.
Let denote by $ \CG_+ $: 
\[
\CG_{+}:=\{g\in \CT_+^*: g(\tau_1\tau_2)=g(\tau_1)g(\tau_2), \ \forall \, \tau_1,\tau_2\in  \CT_+\}. 
\]
For any $g\in \CG_+$ we define a linear operator 
$\Gamma_g: \CT \mapsto \CT$ by
\[
\left\{
  \begin{aligned}
&  \Gamma_g  \one=\one, \qquad  \Gamma_g   \Xi_{\mfl} = \Xi_{\mfl}, \qquad \Gamma_g X_i = X_i+g(X_i), \qquad \Gamma_g (\tau \bar{\tau})= (\Gamma_g \tau )(\Gamma_g \bar{\tau}),  \\
 &     \Gamma_g \CI^{\Labhom}_k(\tau)  =  \CI_k^{\Labhom}(\Gamma_g \tau) + \sum_{\ell \in \N^{d+1}} \frac{X^{\ell}}{\ell!}
  g(\mathcal{J}^{\Labhom}_{k+\ell}(\tau))\;,
 \end{aligned}
 \right.
\]
and we extend it multiplicatively to $ \CT $.
\begin{remark}
 The map $ \Gamma_{g} $ is well defined for every $ g \in \CG_+ $ as a map from $ \CT $ into itself because of the fact that the rule $ \mathcal{R} $ is normal see \cite[Def. 5.7]{BHZ}  which implies that for every $ \prod_{i=1}^n \tau_i \in \CT $ one has 
 $ \prod_{i \in J} \tau_i \in \CT $ where $ J $ is a subset of $ \lbrace 1,...,n \rbrace $. Such operation arises in the definition of $ \Gamma_g $ on some product $ \prod_i \CI_{k_i}^{\Labhom_i}(\tau_i) $ where one can replace any $  \CI_{k_i}^{\Labhom_i}(\tau_i) $ by a polynomial $ \sum_{\ell \in \N^{d+1}} \frac{X^{\ell}}{\ell!}
  g(\mathcal{J}^{\Labhom_i}_{k_i+\ell}(\tau_i)) $. Then we use an inductive argument to conclude that $ \CI_{k_i}^{\Labhom_i}(\Gamma_g \tau_i) \in \CT$ when $ \tau_i \in \CT $. 
 \end{remark}
We define the product $ \circ: \CG_{+} \times \CG_{+} \mapsto \CG_{+} $ recursively by: 
 \[
\left\{ \begin{aligned}
& (g_1 \circ g_2)(X_i) = g_1(X_i) + g_2(X_i), \qquad (g_1 \circ g_2)(\tau_1\tau_2)= (g_1 \circ g_2)(\tau_1) (g_1 \circ g_2)(\tau_2),\\
 & (g_1 \circ g_2)(\mathcal{J}^{\Labhom}_k(\tau)) = g_1(\mathcal{J}^{\Labhom}_{k}( \Gamma_{g_2}\tau)) + \sum_{\ell \in \N^{d+1}} \frac{(g_1(X))^\ell}{\ell!} g_2(\mathcal{J}^{\Labhom}_{k+\ell}( \tau )).
  \end{aligned} 
  \right.
 \]
 where we have made the following abuse of notation
 $ (g_1(X))^{\ell} $ instead of $ \prod_{i =0}^{d} 
 g_1(X_i)^{\ell_i} $. we will also write $ \left( X + g(X) \right)^{\ell} $ instead of $  \prod_{i =0}^{d} \left( X_i + g(X_i) \right)^{\ell_i}  $.
\begin{proposition} 
\begin{enumerate}
\item For every $ g \in \CG_{+} $, $ \alpha \in A $, $ \tau \in \CT_{\alpha} $ and multiindex $ k $, we have $ \Gamma_g \tau - \tau \in \bigoplus_{\beta < | \tau |_{\s}} \CT_{\beta}   $ and $ \Gamma_g \CI^{\Labhom}_{k}( \tau ) - \mathcal{I}^{\Labhom}_{k}( \Gamma_g \tau ) $ is a polynomial.

\item The set $(\Gamma_g,g\in \CG_{+})$ forms a group under the composition of linear operators from $\CT$ to $ \CT$. Moreover, this definition coincides with that of \cite[(8.17)]{reg}.
\item For all $ g , \bar{g}  \in \CG_{+}$, one has $ \Gamma_{g} \Gamma_{\bar{g}} = \Gamma_{g \circ \bar{g}} $. $ (\CG_{+},\circ) $ is a group and each element $ g  $ has a unique inverse $ g^{-1} $ given by the recursive formula
 \begin{equation}\label{g-1}
 \left\{  \begin{aligned}
   & g^{-1}(X_i)  = - g(X_i), \qquad g^{-1}(\tau_1\tau_2)=g^{-1}(\tau_1)g^{-1}(\tau_2), \\
   & g^{-1}(\CJ^{\Labhom}_{k}(\tau)) = -  \sum_{\ell \in \N^{d+1}} \frac{(-g(X))^\ell}{\ell!} g(\CJ^{\Labhom}_{k+\ell}(\Gamma_{g^{-1}} \tau )).
   \end{aligned} \right.
  \end{equation}
  The product $\circ$ coincides with that defined in \cite[Def. 8.18]{reg}.
\end{enumerate}
\end{proposition}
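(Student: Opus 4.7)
The plan is to prove all three parts by induction on the decorated tree $\tau$ written in symbolic notation. Since both $\Gamma_g$ and the composition $\circ$ are defined to be multiplicative (resp.\ compatible with the tree product), the product case reduces to single-factor trees, and the atoms $\one$, $X_i$, $\Xi_\mfl$ are checked directly. The only real work therefore lies in the inductive step $\tau = \CI^{\Labhom}_k(\sigma)$.

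For Part (1), the atomic cases give $\Gamma_g\one - \one = 0$, $\Gamma_g\Xi_\mfl - \Xi_\mfl = 0$, and $\Gamma_g X_i - X_i = g(X_i)\one$, where $|\one|_\s = 0 < |X_i|_\s$; for products, $(\Gamma_g - \id)(\tau_1\tau_2) = (\Gamma_g\tau_1 - \tau_1)(\Gamma_g\tau_2) + \tau_1(\Gamma_g\tau_2 - \tau_2)$ is of strictly lower degree by induction. For the planted case, reading off the definition gives
\begin{equs}
\Gamma_g \CI^{\Labhom}_k(\sigma) - \CI^{\Labhom}_k(\sigma) = \CI^{\Labhom}_k(\Gamma_g\sigma - \sigma) + \sum_{\ell \in \N^{d+1}} \frac{X^\ell}{\ell!} g(\CJ^{\Labhom}_{k+\ell}(\sigma)),
\end{equs}
in which the first term has lower degree by the induction hypothesis (as $\CI^{\Labhom}_k$ shifts all degrees by the same constant) and, in the sum, only indices $\ell$ with $|\CJ^{\Labhom}_{k+\ell}(\sigma)|_\s > 0$ contribute (the remaining summands lie in the ideal $\CJ_-$ and vanish under any $g \in \CG_+$), for which $|X^\ell|_\s = |\ell|_\s < |\CI^{\Labhom}_k(\sigma)|_\s$. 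The polynomial claim is then immediate from the same expression.

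The heart of the proof is Part (3). The identity $\Gamma_g\Gamma_{\bar g} = \Gamma_{g\circ\bar g}$ is proved once more by induction on $\tau$, the non-trivial case being $\tau = \CI^{\Labhom}_k(\sigma)$. Using the multiplicativity of $\Gamma_g$ to obtain $\Gamma_g X^\ell = (X + g(X))^\ell$, direct expansion of the left-hand side yields
\begin{equs}
\Gamma_g\Gamma_{\bar g}\CI^{\Labhom}_k(\sigma) = \CI^{\Labhom}_k(\Gamma_g\Gamma_{\bar g}\sigma) + \sum_\ell \frac{X^\ell}{\ell!} g(\CJ^{\Labhom}_{k+\ell}(\Gamma_{\bar g}\sigma)) + \sum_\ell \frac{(X + g(X))^\ell}{\ell!} \bar g(\CJ^{\Labhom}_{k+\ell}(\sigma)).
\end{equs}
The first summand matches $\CI^{\Labhom}_k(\Gamma_{g\circ\bar g}\sigma)$ by induction on $\sigma$; comparing with the recursive definition of $g \circ \bar g$, the second summand coincides verbatim with one piece, and the third matches the remaining piece after applying the binomial expansion of $(X + g(X))^\ell$ and re-indexing the resulting double sum. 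This combinatorial re-indexing is the main obstacle, although essentially routine. From this identity, $(\CG_+,\circ)$ acquires as neutral element the character $e$ vanishing on $X_i$ and on every $\CJ^{\Labhom}_k(\tau)$ (for which $\Gamma_e = \id$ by direct check), and the recursive formula \eqref{g-1} is obtained by unfolding $g\circ g^{-1} = e$ step by step, the triangular structure of the recursion guaranteeing unique solvability.

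Part (2) then follows as a direct corollary: the group law on $(\Gamma_g)_{g\in \CG_+}$ is transported from that on $(\CG_+,\circ)$ via $g \mapsto \Gamma_g$. The coincidence with \cite[(8.17)]{reg} and \cite[Def.~8.18]{reg} is obtained by a parallel induction; the coproduct $\Delta^{\!+}$ used in \cite{reg} satisfies (by \cite[Prop.~8.9]{reg}) the same recursive identities taken here as the direct definitions of $\Gamma_g$ and $\circ$, so the two formulations agree symbol by symbol.
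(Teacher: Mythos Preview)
Your approach is essentially the same as the paper's: induction on the symbolic construction of $\tau$, with the only non-trivial step being $\tau = \CI^{\Labhom}_k(\sigma)$, and the identity $\Gamma_g\Gamma_{\bar g} = \Gamma_{g\circ\bar g}$ established by expanding both sides and matching via a binomial re-indexing. The paper carries out exactly these computations.

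There are, however, two places where your sketch skips work that the paper does explicitly. First, you never address associativity of $\circ$. The identity $\Gamma_g\Gamma_{\bar g} = \Gamma_{g\circ\bar g}$ alone does not give it; you would additionally need injectivity of $g\mapsto\Gamma_g$, which you do not mention. The paper instead verifies $(g_1\circ g_2)\circ g_3 = g_1\circ(g_2\circ g_3)$ by a direct computation on $\CJ^{\Labhom}_k(\tau)$, parallel to the one for $\Gamma_g\Gamma_{\bar g}$. Second, your derivation of \eqref{g-1} only unfolds $g\circ g^{-1} = e$, producing a right inverse; you do not argue that $g^{-1}\circ g = e$ as well. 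The paper checks both identities separately (the second one using the inductive hypothesis $\Gamma_{g^{-1}\circ g}\,\tau = \tau$ on the smaller tree). Both gaps are routine to fill, but as written the claim that $(\CG_+,\circ)$ is a group is not yet justified.
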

\begin{proof}

We prove the first property by induction on the construction of $ \CT $. Let $ g \in \CG_{+} $. The proof is obvious for $ \tau \in \lbrace  \one,X_i,\Xi_{\mfl} \rbrace $. Let $ \tau = \tau_{1} \tau_2 $ then we have
\[
 \Gamma_g \tau_1 \tau_2  = 
 \Gamma_g \tau_1 \left( \Gamma_g \tau_2 - \tau_2 \right) + \left( \Gamma_g \tau_1 - \tau_1 \right) \tau_2 + \tau_1 \tau_2.  
\] 
We apply the induction hypothesis on $ \tau_1 $ and $ \tau_2 $. Let $ \tau = \CI^{\Labhom}_k(\tau') $ then 
the recursive definition of $ \Gamma_g $ gives: 
\[
 \Gamma_g \CI^{\Labhom}_{k}(\tau') = \CI^{\Labhom}_k(\Gamma_g \tau' - \tau') + \CI^{\Labhom}_k(\tau') + \sum_{\ell \in \N^{d+1}} \frac{X^{\ell}}{\ell!}
  g(\mathcal{J}^{\Labhom}_{k+\ell}(\tau')).
\]
We apply the induction hypothesis on $ \tau' $.

Let $ g , \bar{g}  \in \CG_+$, $h=g \circ \bar{g} \in \CG_+$. Simple computations show that
\[
\Gamma_h  \one=\one, \qquad \Gamma_h   \Xi_{\mfl} = \Gamma_{g} \Gamma_{\bar{g}}\Xi_{\mfl}, \qquad \Gamma_h X_i = \Gamma_{g} \Gamma_{\bar{g}}X_i, \qquad \Gamma_h (\tau\bar{\tau})=\Gamma_{g} \Gamma_{\bar{g}}(\tau\bar{\tau}).
\]
We need to check that $\Gamma_{g} \Gamma_{\bar{g}}\CI^{\Labhom}_k(\tau) =\Gamma_{h}\CI^{\Labhom}_k(\tau) $:
\[
  \begin{aligned}
& \Gamma_{g} \Gamma_{\bar{g}}\CI^{\Labhom}_k(\tau) = \Gamma_{g} \left(\CI_k^{\Labhom}(\Gamma_{\bar{g}} \tau) + \sum_{\ell \in \N^{d+1}} \frac{X^{\ell}}{\ell!}
  {\bar{g}}(\mathcal{J}^{\Labhom}_{k+\ell}(\tau))
\right)
  \\ & = \CI^{\Labhom}_{k}(\Gamma_{g} \Gamma_{\bar{g}}\tau) + \sum_{\ell \in \N^{d+1}} \frac{X^{\ell}}{\ell!}
  g(\CJ^{\Labhom}_{k+\ell}(\Gamma_{\bar{g}}\tau))+ \sum_{\ell \in \N^{d+1}} \frac{(X+g(X))^{\ell}}{\ell!}
  {\bar{g}}(\mathcal{J}^{\Labhom}_{k+\ell}(\tau))
 \end{aligned}
\]
while
\[
  \begin{aligned}
& \Gamma_{h}\CI_k^{\Labhom}(\tau)=\CI^{\Labhom}_{k}(\Gamma_{h}\tau)+
\sum_{\ell \in \N^{d+1}} \frac{X^{\ell}}{\ell!}h(\mathcal{J}^{\Labhom}_{k+\ell}(\tau))
\\ & = \CI^{\Labhom}_{k}(\Gamma_{g} \Gamma_{\bar{g}}\tau) + \sum_{\ell \in \N^{d+1}} \frac{X^{\ell}}{\ell!}
\left( g(\mathcal{J}^{\Labhom}_{k+\ell}( \Gamma_{\bar g}\tau)) + \sum_{j \in \N^{d+1}} \frac{(g(X))^j}{j!} \bar g(\mathcal{J}^{\Labhom}_{k+\ell+j}( \tau )) \right)
\\ & = \CI^{\Labhom}_{k}(\Gamma_{g} \Gamma_{\bar{g}}\tau) +  \sum_{\ell \in \N^{d+1}} \frac{X^{\ell}}{\ell!}
 g(\mathcal{J}^{\Labhom}_{k+\ell}( \Gamma_{\bar g}\tau)) + \sum_{\ell \in \N^{d+1} } \frac{(X+g(X))^\ell}{\ell!} \bar g(\mathcal{J}^{\Labhom}_{k+\ell}( \tau ))
.
 \end{aligned}
\]
By comparing the two formulae, we obtain that $\Gamma_{g} \Gamma_{\bar{g}}=\Gamma_{g \circ \bar{g}}$.

Let us show that $\circ$ is associative on $\CG_+$, namely that $g_1\circ(g_2\circ g_3)=(g_1\circ g_2)\circ g_3$; this is obvious if tested on $X$ and on $\tau\bar{\tau}$; it remains to check this formula on $\CI^{\Labhom}_k(\tau)$:
\[
  \begin{aligned}
& g_1\circ(g_2\circ g_3)(\CJ^{\Labhom}_k(\tau))=g_1(\CJ^{\Labhom}_{k}(\Gamma_{g_2 \circ g_3}\tau)) + \sum_{\ell \in \N^{d+1} } \frac{(g_1(X))^\ell}{\ell!} g_2 \circ g_3(\mathcal{J}^{\Labhom}_{k+\ell}(\tau )) 
\\& =g_1(\CJ^{\Labhom}_{k}(\Gamma_{g_2 \circ g_3}\tau)) + \sum_{\ell \in \N^{d+1} } \frac{(g_1(X))^\ell}{\ell!} g_2 (\mathcal{J}^{\Labhom}_{k+\ell}(\Gamma_{g_3}\tau )) \\ & + \sum_{\ell \in \N^{d+1}} \frac{(g_1(X) + g_2(X))^\ell}{\ell!} g_3 (\mathcal{J}^{\Labhom}_{k+\ell}(\tau ))  ,
 \end{aligned}
\]
while
\begin{equs}
  (g_1\circ g_2)\circ g_3 (\CJ_k^{\Labhom}(\tau)) & = g_1 \circ g_2(\CJ_{k}^{\Labhom}(\Gamma_{g_3}\tau)) + \sum_{\ell \in \N^{d+1} } \frac{(g_1 \circ g_2(X))^\ell}{\ell!} g_3(\mathcal{J}^{\Labhom}_{k+\ell}( \tau )) 
 \\ & = g_1(\CJ^{\Labhom}_{k}(\Gamma_{g_2} \Gamma_{g_3}\tau)) + 
 \sum_{\ell \in \N^{d+1} } \frac{(g_1(X))^\ell}{\ell!} g_2(\mathcal{J}^{\Labhom}_{k+\ell}(\Gamma_{g_3} \tau )) \\ & + \sum_{\ell \in \N^{d+1} } \frac{(g_1(X) + g_2(X))^\ell}{\ell!} g_3(\mathcal{J}^{\Labhom}_{k+\ell}( \tau ))
\end{equs}
and again by comparing the two formulae we obtain the claim.

Let us show now that \eqref{g-1} defines the correct inverse in $(\CG_{+},\circ)$. First of all, the neutral element in $\CG_+$ is clearly ${\bf 1}^*(\tau):=\un{(\tau=\one)}$. As usual, the only non-trivial property is that $g\circ g^{-1}(\CI^{\Labhom}_{k}(\tau)) = g^{-1}\circ g(\CI^{\Labhom}_{k}(\tau)) ={\bf 1}^*(\CI^{\Labhom}_{k}(\tau))=0$. We have
\begin{equs}
& g\circ g^{-1}(\CJ^{\Labhom}_{k + \ell}(\tau)) = g(\CJ_{k+ \ell}(\Gamma_{g^{-1}}\tau)) + \sum_{m \in \N^{d+1}} \frac{(g(X))^m}{m!} g^{-1}(\mathcal{J}^{\Labhom}_{k+\ell + m}( \tau )) , \\
& = g(\CJ^{\Labhom}_{k+ \ell}(\Gamma_{g^{-1}}\tau)) - \sum_{m,\ell \in \N^{d+1}} \frac{(g(X))^m}{m!}  \frac{(-g(X))^\ell}{\ell!} g(\mathcal{J}^{\Labhom}_{k+\ell + m}(\Gamma_{g^{-1}} \tau ))  = 0
 \end{equs}
and
\begin{equs}
& g^{-1}\circ g(\CJ^{\Labhom}_{k}(\tau)) = g^{-1}(\CJ^{\Labhom}_{k}(\Gamma_{g}\tau)) + \sum_{\ell \in \N^{d+1}} \frac{(g^{-1}(X))^\ell}{\ell!} g(\mathcal{J}^{\Labhom}_{k+\ell}( \tau )) 
\\ & = g^{-1}(\CJ^{\Labhom}_{k}(\Gamma_{g}\tau)) + \sum_{\ell \in \N^{d+1} } \frac{(-g(X))^\ell}{\ell!} g(\mathcal{J}^{\Labhom}_{k+\ell}( \tau )) 
\\ & =   - \sum_{\ell \in \N^{d+1} } \frac{(-g(X))^\ell}{\ell!} g(\mathcal{J}^{\Labhom}_{k+\ell}(\Gamma_{g^{-1} \circ g} \tau ))  + \sum_{\ell \in \N^{d+1} } \frac{(-g(X))^\ell}{\ell!} g(\mathcal{J}^{\Labhom}_{k+\ell}( \tau ))  ,
\end{equs}
where we have used a recurrence assumption in the identification $\Gamma_{g^{-1}\circ g} \tau=\tau$. Since $\Gamma_{{\bf 1}^*}$ is the identity in $ \CT $, we obtain that $(\Gamma_g, g\in \CG_+)$ also forms a group. 

We show now that these objects coincide with those defined in \cite[section 8]{reg}. 
In \cite{reg,BHZ}, the action of $\CG_+$ on $ \CT $ is defined through the following co-action $ \Delta: \CT \mapsto  \CT \otimes \CT_+ $,
\[
\left\{
 \begin{aligned}
  &  \Delta \one  = \one \otimes \one,    \text{   } \Delta X_i  = X_i \otimes \one +  \one \otimes X_i,\\
   & \Delta \Xi_{\mfl}  = \Xi_{\mfl} \otimes  \one 
 , \text{   }
  \Delta(\tau \bar{\tau})   = (\Delta \tau) (\Delta \bar{\tau}) \\ &  \Delta \CI^{\Labhom}_k(\tau) =  (\CI^{\Labhom}_k \otimes \id)    \Delta \tau + \sum_{\ell \in \N^{d+1}} \frac{X^\ell}{\ell! } \otimes 
  \mathcal{J}^{\Labhom}_{k+\ell}(\tau).
  \end{aligned}
  \right.
\]
We claim that 
\begin{equation}\label{claimGamma}
 \Gamma_{g} \tau  = (\id \otimes g) \Delta \tau, \qquad \forall\, g\in \CG_+, \, \tau\in \CT.
\end{equation}
First, \eqref{claimGamma} is easily checked on $\one,X_i,\Xi_{\mfl}$ and $\tau\bar\tau\in \CT$. We check the formula on $\CI^{\Labhom}_k(\tau)$:
\[
 \begin{aligned} 
  \Gamma_g \CI^{\Labhom}_k(\tau) & =  \CI^{\Labhom}_k(\Gamma_g \tau) + \sum_{\ell \in \N^{d+1}} \frac{(g(X))^{\ell}}{\ell!}
  g(\mathcal{J}^{\Labhom}_{k+\ell}(\tau)) \\
  & = (\id \otimes g)\left[(\CI^{\Labhom}_k \otimes \id)    \Delta \tau +\sum_{\ell \in \N^{d+1}} \frac{X^\ell}{\ell! } \otimes  \mathcal{J}^{\Labhom}_{k+\ell}(\tau)\right]= (\id \otimes g)\Delta \CI^{\Labhom}_k(\tau).
 \end{aligned}
\]
In \cite{reg,BHZ}, another coproduct $ \Deltap :\CT_+ \mapsto \CT_+\otimes \CT_+$ is defined as follows:
   \[
   \left\{
   \begin{aligned}
 &  \Deltap \one =  \one \otimes \one ,\quad
   \Deltap X_i = X_i  \otimes \one + \one \otimes X_i, \quad
   \Deltap (\tau \bar{\tau}) =
(\Deltap \tau) (\Deltap \bar{\tau}) , \\ 
&   \Deltap \mathcal{J}^{\Labhom}_{k}(\tau) = ( \mathcal{J}^{\Labhom}_{k} \otimes \id )  \Delta \tau +
\sum_{\ell \in \N^{d+1}} \frac{X^\ell}{\ell !} \otimes \CJ^{\Labhom}_{k+ \ell}(\tau) . 
    \end{aligned}
    \right.
   \]
   In order to prove that the product $\circ$ is the same as in \cite{reg}, we need to check that for every $ g_1, g_2 \in \CG_+ $ we have: 
  \[
  \left( g_{1} \circ g_2 \right)(\tau) = \left( g_{1} \otimes g_2 \right) \Deltap \tau, \qquad \forall \, \tau\in \CT_+.
   \]
As usual, this formula is easily checked on $\one ,X_i$ and on products  $\tau\bar\tau\in \CT_+$. We check the formula on $\mathcal{J}^{\Labhom}_k(\tau)$:
   \[
   \begin{aligned}
    & (g_1 \circ g_2)(\mathcal{J}^{\Labhom}_k(\tau)) = g_1(\mathcal{J}^{\Labhom}_{k}(\Gamma_{g_2}\tau)) + \sum_{\ell \in \N^{d+1} } \frac{(g_1(X))^\ell}{\ell!} g_2(\mathcal{J}^{\Labhom}_{k+\ell}( \tau )) \\
    & = (g_{1} \otimes \id) ( \CJ_k^{\Labhom} \otimes g_2) \Delta \tau  
    +  (g_1 \otimes g_2)\sum_{\ell \in \N^{d+1} } \left( \frac{X^\ell}{\ell ! } \otimes \CJ^{\Labhom}_{k+\ell}(\tau) 
   \right)= \left( g_{1} \otimes g_2 \right) \Deltap \tau.
    \end{aligned} 
   \]
   \end{proof}

\section{Renormalised models}

\label{section 3}

We start the section by a general recursive formulation of the renormalisation group without coproduct. Then we use this formulation to construct the renormalised model. During this section, elements of the model space $ \CT $ are described with the symbolic notation.

   \subsection{A recursive formulation}
Before giving the recursive definition of the renormalisation map, we precise some notations. We denote by $ \Vert \tau \Vert $ the number of times the symbols $\Xi_{\mfl}$ appear in $ \tau $. We extend the definitions of $ |\cdot|_{\s} $ and $ \Vert \cdot \Vert $ to any linear combination 
$ \tau=\sum_i \alpha_i \tau_i $ of canonical basis vectors $\tau_i$ with $ \alpha_i \neq 0 $ by
\begin{equ}[e:defNorms]
 \left|\tau \right|_{\s}:= \min_i | \tau_i |_{\s} ,  \qquad \left \Vert \tau \right \Vert:= \max_i \Vert \tau_i \Vert\;,
\end{equ}
which suggests the natural conventions $|0|_{\s} = +\infty$ and $\|0\| = - \infty$.
We also define a partial order  $ <_{\mathcal{T}} $ on $\mathcal{T}$ by setting:
\begin{equation}
\label{order}
\tau_1<_{\mathcal{T}}\tau_2 \quad {\rm if} \quad \|\tau_1\|<\|\tau_2\| \quad {\rm or} \quad
(\|\tau_1\|=\|\tau_2\| \quad {\rm and} \quad |\tau_1|_{\s}<|\tau_2|_{\s}).
\end{equation}

\begin{definition}
A symbol $ \tau $ is an elementary symbol if it has the following form:
$ \Xi_{\mfl} $, $ X_i $ and $ \I^{\Labhom}_k(\sigma) $ where $ \sigma $ is a symbol.
\end{definition}

\begin{proposition} \label{pro:order}
Let $ \tau = \prod_i \tau_i $ such that the $ \tau_i $ are elementary symbols and such that $ \tau $ is not an elementary symbol then $ \tau_i <_{\mathcal{T}} \tau $.
\end{proposition}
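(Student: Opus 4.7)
The plan is to exploit the additivity of both the seminorm $\|\cdot\|$ and the degree $|\cdot|_{\mathfrak{s}}$ under the tree product: since identifying two roots only amounts to adding their node decorations while preserving all edges, both quantities split as $\|\tau\| = \sum_i \|\tau_i\|$ and $|\tau|_{\mathfrak{s}} = \sum_i |\tau_i|_{\mathfrak{s}}$. I would also record at the outset that, because $\tau$ is not elementary, the decomposition $\tau = \prod_i \tau_i$ must contain at least two factors (a single factor would coincide with an elementary symbol, and the empty product would be $\one$, which has the form $X^0$ rather than the elementary forms $X_i$, $\Xi_{\mfl}$, $\mathcal{I}^{\mathfrak{t}}_k(\sigma)$).

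Now fix $i$. From $\|\tau_i\| \le \|\tau\|$ we immediately obtain $\tau_i <_{\mathcal{T}} \tau$ whenever the inequality is strict, by the very definition of the order. The remaining case is $\|\tau_i\| = \|\tau\|$, in which $\sum_{j \neq i}\|\tau_j\| = 0$ forces every other factor to be noise-free, i.e.\ $\|\tau_j\| = 0$ for $j \neq i$. For such noise-free elementary factors I would establish the auxiliary positivity statement $|\tau_j|_{\mathfrak{s}} > 0$. When $\tau_j = X_k$ this is clear, since $|X_k|_{\mathfrak{s}} = \mathfrak{s}_k > 0$. When $\tau_j = \mathcal{I}^{\mathfrak{t}}_k(\sigma)$ with $\|\sigma\| = 0$, one argues by induction on the depth, using that the rule $\mathcal{R}$ is normal and subcritical so that every abstract integrator appearing in $B_{\circ}$ strictly increases degree, that is $|\mathfrak{t}|_{\mathfrak{s}} - |k|_{\mathfrak{s}} > 0$. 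The induction hypothesis combined with this strict positivity of the edge contribution gives $|\tau_j|_{\mathfrak{s}} > 0$. Since at least one index $j \neq i$ exists, summing these strictly positive contributions yields
\[
|\tau|_{\mathfrak{s}} - |\tau_i|_{\mathfrak{s}} = \sum_{j \neq i} |\tau_j|_{\mathfrak{s}} > 0,
\]
which together with $\|\tau_i\| = \|\tau\|$ delivers $\tau_i <_{\mathcal{T}} \tau$ in this remaining case as well.

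The main obstacle is the auxiliary positivity claim for noise-free elementary symbols: once it is in place, the rest of the proof is a direct case distinction on the two clauses defining $<_{\mathcal{T}}$. Its verification is essentially an unpacking of the constraints that the normal subcritical rule places on which edge decorations $(\mathfrak{t},k)$ can appear in trees of $B_{\circ}$, and it is the only step where any nontrivial structural property of $\mathcal{R}$ enters.
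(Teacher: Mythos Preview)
Your argument follows the paper's exactly: write $\tau = \tau_j\,\bar\tau_j$ with $\bar\tau_j = \prod_{i\neq j}\tau_i$, and split on whether $\bar\tau_j$ carries a noise. If so, $\|\tau_j\| < \|\tau\|$; if not, one needs $|\bar\tau_j|_\s > 0$ to conclude via the second clause of $<_{\CT}$. The paper simply asserts this positivity without proof, whereas you attempt to derive it.

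The derivation you sketch is not quite sound. Subcriticality in the sense of \cite[Def.~5.14]{BHZ} is a condition on the aggregate regularity balance at each node type, not on individual edges; it does not by itself force $|\Labhom|_\s - |k|_\s > 0$ for every integrator edge $(\Labhom,k)$ admitted by the rule. One can write down subcritical rules containing an edge type with $|k|_\s \ge |\Labhom|_\s$, provided that edge cannot be iterated indefinitely. In the concrete equations of Section~\ref{section 5} your edge bound does hold, and that is presumably what the paper is tacitly relying on, but as a consequence of the stated abstract hypotheses this step remains open --- in your proof just as in the paper's. You have correctly located where the nontrivial content lies; you just have not proved the auxiliary claim, and neither does the paper.
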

\begin{proof}
We consider $ \tau = \prod_i \tau_i $ and let $ \tau_j $ an elementary symbol appearing in the previous decomposition. We define $ \bar{\tau}_{j} =  \prod_{i \neq j } \tau_i $. 
If the product $ \bar{\tau}_{j} $ contains a term of the form $ \CI_{k}^{\Labhom}(\sigma) $ with $ \sigma  $ having at least one noise or a term of the form $ \Xi_{\mfl} $  then $ \Vert \bar{\tau}_j \Vert > 0 $ and  $ \Vert \tau_j  \Vert < \Vert
\bar{\tau}_{j}   \Vert + \Vert \tau_j \Vert = \Vert \tau \Vert $. Otherwise
$ \Vert \tau_j  \Vert = \Vert \tau \Vert  $ but
$  | \bar{\tau}_j |_{\s} > 0 $ which gives 
$  | \tau_j |_{\s}  < | \tau_j|_{\s} + | \bar{\tau}_j |_{\s} = | \tau |_{\s} $. 
Finally, we obtain $ \tau_i <_{\mathcal{T}} \tau $.
\end{proof}

Given a regularity structure $ (A,\CT,G) $, we consider the space  $ \mathcal{L}(\CT) $ of linear maps on $ \CT $. For our recursive formulation, we choose a subset of $ \mathcal{L}(\CT) $:  

\begin{definition}\label{def_adm}
A map $ R \in \mathcal{L}(\CT) $ is admissible if
\begin{enumerate}
\item For every elementary symbol $ \tau $, $ R \tau = \tau $.
\item For every multiindex  $  k $ and any symbol $ \tau $, $ R (X^k \tau)= X^k R \tau $.
\item For each $ \tau \in \mathcal{T} $, $ \Vert R \tau - \tau \Vert < \Vert \tau \Vert $.
 \item For each $ \tau \in \mathcal{T} $, $ | R \tau - \tau |_{\s} > | \tau |_{\s} $.
 \item It commutes with $G$: $R \Gamma = \Gamma R$ for every $\Gamma \in G$.
\end{enumerate}
\end{definition}

We denote by $ \mathcal{L}_{ad}(\CT) $ the set of admissible maps. For $ R \in \mathcal{L}_{ad}(\CT) $, we define a renormalisation map $M=M_R$ by:
\begin{equ}[e:defM]
  \left\{ \begin{aligned}
  & M^{\circ} \one  = \one, \qquad M^{\circ} X_i = X_i,  \qquad M^{\circ} \Xi_{\mfl} = \Xi_{\mfl}, \\ 
  & M^{\circ}  \tau \bar{\tau} = 
 \left( M^{\circ} \tau \right) \left(
 M^{\circ} \bar{\tau} \right), \qquad
 M \tau = M^{\circ} R \tau,    \\
  & M^{\circ} \I_k^{\Labhom}(\tau) =  \I_k^{\Labhom}(M \tau). 
  \end{aligned} \right.
  \end{equ}
The space of  maps $ M $ constructed in this way  is denoted by $ \mathfrak{R}_{ad}[\mathscr{T}] $.
  The main idea behind this definition is that $ R $ computes the interaction between several elements of the product $ \prod_i \tau_i $.
In \cite{reg} and \cite{wong}, elements of the renormalisation group are described by an exponential: $
 M = \exp(\sum_i C_i L_i)$ where $(L_i)_i\subset \mathcal{L}(\CT) $. When the exponential happens to be just equal to $ \id + \sum_i C_i L_i $ then the link with 
 the space $ \mathfrak{R}_{ad}[\mathscr{T}] $ is straightforward. But when several iterations of the $ L_i $ are needed even in the case of the $ L_i $ being commutative the link becomes quite unclear and hard to see. It is also unclear when $ M $ is described with
 a coproduct as in \eqref{Mdelta}. The main difficulty occurs when one has to face nasty divergences.
   The recursive construction is more convenient for several purposes: it gives an explicit and a canonical way of computing the diverging constant we have to subtract. Moreover, the proof of the construction of the renormalised model is simpler than the one given in \cite{reg} and \cite{BHZ}.
 
 \begin{remark}
The Definitions~\ref{e:defNorms}  as well as the convention that follows
are designed in such a way that if the third and the fourth
conditions of Definition~\ref{def_adm} hold for canonical basis vectors $\tau$, then
they automatically hold for every $\tau \in \CT$.
\end{remark}

\begin{remark} 
The first two conditions of Definition~\ref{def_adm} guarantee that $ M $ commutes with the abstract integrator map. 
The third condition is crucial for the definition of $M$: the recursion \eqref{e:defM} stops after a finite number of iterations since it decreases strictly the quantity $ \Vert \cdot \Vert $ and thus the partial order $ <_{\mathcal{T}} $. Moreover, this condition guarantees that $ R = \id + L $ where $ L $ is a nilpotent map and therefore $ R $ is invertible.
The fourth condition allows us to treat the analytical bounds in the definition of the model
and the last condition is needed for the algebraic identities. \end{remark}

\begin{remark}
Note that $M=M_R$ does not always commute with the structure group $G$ even if $ R $ does ;
we will see a counterexample with the group of the generalised KPZ equation in Section\ref{generalised_KPZ_ex}. 
\end{remark}

\begin{proposition} Let $ R \in \mathcal{L}_{ad}(\CT) $, then $ M_R $ is well-defined.
\end{proposition}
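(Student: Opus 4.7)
The plan is to show termination of the mutually recursive definitions \eqref{e:defM} of $M$ and $M^\circ$ on every basis vector $\tau \in \CT$; once this is done the maps extend uniquely by linearity. I would run a double induction with outer parameter $\|\tau\|$ and inner parameter the number of edges of $\tau$.

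I would first dispense with the base cases $\tau \in \{\one, X_i, \Xi_\mfl\}$, which are handled directly by \eqref{e:defM}. More generally, when $\|\tau\| = 0$, condition~3 of Definition~\ref{def_adm}, together with the convention $\|0\| = -\infty$, forces $R\tau = \tau$, so $M\tau = M^\circ \tau$ and only the multiplicative part of \eqref{e:defM} remains, to which the size induction applies.

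For the inductive step, three kinds of recursive calls are generated. The first is the expansion $M\tau = M^\circ \tau + M^\circ(R\tau - \tau)$: by condition~3, every basis vector in $R\tau - \tau$ has $\|\cdot\|$ strictly less than $\|\tau\|$, so the outer induction covers these terms. The second is the multiplicativity of $M^\circ$ applied to a product of elementary symbols: by Proposition~\ref{pro:order}, each factor is strictly smaller than $\tau$ in $<_\CT$, and in particular has strictly fewer edges at the same $\|\cdot\|$ level, so the inner induction applies. The third is the identity $M^\circ \I_k^\Labhom(\sigma) = \I_k^\Labhom(M\sigma)$: the subtree $\sigma$ has one edge fewer than $\I_k^\Labhom(\sigma)$, so the inner induction again applies.

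The delicate point is precisely this third call: one would like to run the whole argument on $<_\CT$ alone, but $\sigma <_\CT \I_k^\Labhom(\sigma)$ fails in general, because $\|\sigma\| = \|\I_k^\Labhom(\sigma)\|$ while $|\sigma|_\s$ can exceed $|\I_k^\Labhom(\sigma)|_\s$ --- this is exactly the negative-degree situation that motivates renormalisation in the first place. It is this gap that makes the auxiliary size induction indispensable; once it is in place, the rest of the argument is routine bookkeeping.
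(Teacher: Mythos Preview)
Your argument is close to the paper's but uses a different inductive measure. The paper runs a single induction on $<_\CT$ and, for $\tau=\CI_k^\Labhom(\tau')$, simply asserts $|\tau'|_\s<|\CI_k^\Labhom(\tau')|_\s$ (so that $\tau'<_\CT\tau$, since $\|\cdot\|$ is unchanged). You instead handle that case by edge count, precisely to avoid relying on this degree inequality, which indeed fails whenever $|k|_\s\ge|\Labhom|_\s$; the paper tacitly takes for granted that its rule never produces such integrators. So your extra layer of induction buys generality the paper does not explicitly claim.

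There is, however, a small slip in your product step. You write that each elementary factor $\tau_i$ of $\tau=\prod_i\tau_i$ ``has strictly fewer edges at the same $\|\cdot\|$ level''. This is not true: take $\tau=X_1\,\CI_k^\Labhom(\sigma)$; the factor $\CI_k^\Labhom(\sigma)$ has the same $\|\cdot\|$ \emph{and} the same number of edges as $\tau$. Proposition~\ref{pro:order} gives $\tau_i<_\CT\tau$, but says nothing about edges. The repair is immediate: an elementary factor is either a base case or of the form $\CI_k^\Labhom(\sigma)$, in which case $M^\circ$ is evaluated not by the product rule but by the integrator rule, and the recursive call is to $M\sigma$ with $\sigma$ genuinely having one edge fewer than $\tau$. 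Equivalently, one can phrase the induction entirely in terms of $M$: every call $M\tau$ unwinds via $M^\circ R$ to calls $M\sigma_i$ with $\sigma_i$ strictly smaller in $(\|\cdot\|,\text{edges})$ --- this is exactly the viewpoint spelled out in the Remark following the paper's proof.
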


\begin{proof}
We proceed by induction using the order $ <_{\mathcal{T}} $. 
If $ \tau \in \lbrace \one,   \Xi_{\mfl}, X_i 
 \rbrace $ then 
$ M  \tau = M^{\circ} R \tau = M^{\circ} \tau = \tau  $. If $ \tau = 
\CI_k^{\Labhom}(\tau') $ then 
\begin{equ} 
M \CI_k^{\Labhom}(\tau') = M^{\circ} R \CI_k^{\Labhom}(\tau') = M^{\circ} \CI_k^{\Labhom}(\tau')
= \CI_k^{\Labhom}(M \tau').
\end{equ}
We conclude by applying the induction hypothesis on $ \tau' $ because we have $ | \tau' |_{\s} < | \tau |_{\s} $.
Let $ \tau = \prod_i \tau_i \in \mathcal{T} $ a product of elementary symbols with at least two symbols in the product, we can write 
\begin{equ}
M \tau = M^{\circ} (R \tau - \tau) 
+ M^{\circ} \tau.
\end{equ}
We apply the induction hypothesis on $ R \tau - \tau <_{\mathcal{T}} \tau $ because $ \Vert R \tau - \tau \Vert < \Vert \tau \Vert $. For $ M^{\circ} \tau  $, we have
\begin{equ}
M^{\circ} \tau = \prod_i M^{\circ} R \tau_i = \prod_i M \tau_i.
\end{equ}
We know from Proposition~\ref{order} that for every $ i $, $ \tau_i <_{\mathcal{T}} \tau $. Therefore, we apply the induction hypothesis on the $ \tau_i $.

\end{proof}

\begin{remark}
In the sequel, we use the order $ <_{\mathcal{T}} $ for all the proofs by induction on the symbols. It is possible to choose other well-order on the symbols for these proofs. One minimal condition in order to have $ M_{R} $ well defined is the following on $ R $: for each $ \tau \in \CT $, there exist $ \Xi_{\mfl_j} $, $ \CI_{k_i}^{\Labhom_i}(\sigma_i) $  and $ k \in \R^{d+1} $ such that
\begin{equs}
 R \tau = X^k \prod_j \Xi_{\mfl_j} \prod_i \CI_{k_i}^{\Labhom_i}(\sigma_i), \quad \sigma_i <_{\CT} \tau. 
 \end{equs}
 Then by definition of $ M_{R} $, we get:
 \begin{equs}
 M^{\circ} R \tau = X^k \prod_j \Xi_{\mfl_j} \prod_i \CI_{k_i}^{\Labhom_i}(M \sigma_i), 
 \end{equs}
 which allows us to use an inductive argument on the $ M \sigma_i $.
\end{remark}
 
 \begin{remark}
There is an alternative definition of the map $ M $.
We denote by $ M_{L} $ the representation of $ M $ given by:
\begin{equation}
\label{recf}
  \left\{ \begin{aligned}
  & M \one  = \one, \qquad M X_i = X_i,  \qquad M \Xi_{\mfl} = \Xi_{\mfl}, \\ 
  & M \prod_i \tau_i = \prod_i M \tau_i  - M L \prod_i \tau_i, \\
  & M \I_k^{\Labhom}(\tau) =  \I_k^{\Labhom}(M \tau), 
  \end{aligned} \right.
  \end{equation}
where the $ \tau_i $ are elementary  and the map $ L $ needs to satisfy the following properties: 
\begin{enumerate}
\item For every elementary symbol $ \tau $, $ L \tau = 0 $ and for every multiindex  $  k $ and symbol $ \bar \tau $, $ L X^k \bar \tau= X^k L \bar \tau $.
\item For each $ \tau \in \mathcal{T} $, $ \Vert L \tau \Vert < \Vert \tau \Vert $ and $ | L \tau |_{\s} > | \tau |_{\s} $.
 \item It commutes with $G$: $L \Gamma = \Gamma L$ for every $\Gamma \in G$.
\end{enumerate}
This properties are very similar to those of $ R $. Noticing that the map $ L $ is nilpotent, one can check that $ R = (\id+L)^{-1} $ and $ M^{\circ} = M (\id+L) $.
\end{remark}

\subsection{Construction of the renormalised Model}

We first define a metric $ d_{\s} $ on $ \R^{d+1} $ associated to the scaling $ \s $ by:
\begin{equs}
d_{\s}(x,y) = \Vert x-y  \Vert_{\s} = \sum_{i=0}^d | x_i - y_i |^{1/\s_i}.
\end{equs}
We recall the definition of a smooth model in \cite[Def. 2.17]{reg}:
\begin{definition} A smooth model for a regularity structure $ \mathscr{T} = 
(A,\CH,G) $ consists of maps:
\[
  \begin{aligned}
  \Pi: \R^{d+1} & \rightarrow \mathcal{L}(\mathcal{H},\mathcal{C}^{\infty}(\R^{d+1})) & & \Gamma: \R^{d+1} \times \R^{d+1}  \rightarrow G \\
   x & \mapsto \Pi_{x} & &  (x,y)  \mapsto \Gamma_{xy}
  \end{aligned}
\]
such that $ \Gamma_{xy} \Gamma_{yz} = \Gamma_{xz} $ and
$ \Pi_{x} \Gamma_{xy} = \Pi_{y} $. Moreover, for every $ \alpha \in A $ and every compact set $ \mathfrak{K} \subset \R^{d+1} $ there exists a constant $ C_{\alpha,\mathfrak{K}} $  such that the bounds 
\[
  \begin{aligned}
  \vert (\Pi_{x} \tau)(y) \vert \leq C_{\alpha,\mathfrak{K}} 
  \|x-y\|_\s^{\alpha} \Vert \tau \Vert_{\alpha}, & & 
  \Vert \Gamma_{xy} \tau \Vert_{\beta} \leq C_{\alpha,\mathfrak{K}} \|x-y\|_\s^{\alpha-\beta} \Vert \tau \Vert_{\alpha}
  \end{aligned}
\]
hold uniformly over all $ (x,y) \in \mathfrak{K} $, all $ \beta \in A $ with $ \beta \leq \alpha $ and all $ \tau \in \mathcal{H}_{\alpha} $.
\end{definition}

In the previous definition, for $ \tau \in \CH $, $ \Vert \tau \Vert_{\alpha} $ denotes the norm of 
the component of $ \tau $ in the Banach space $ \CH_{\alpha} $. We suppose given a collection of kernels $ \lbrace K_{\Labhom} \rbrace_{\Labhom \in \mathcal{L}_+} $, $ K_{\Labhom} : \R^{d+1} \setminus {0} \rightarrow \R $ satisfying the condition \cite[Ass. 5.1]{reg} with $ \beta = | \Labhom|_{\s} $ and a collection of noises $\lbrace \xi_{\mfl} \rbrace_{\Labhom \in \mathcal{L}_-}  $ such that $ \xi_{\mfl} \in \mathcal{C}^{\infty}(\R^{d+1}) $.
We use the notation  $ D^k = \prod_{i=0}^d \frac{\partial^{k_i}}{\partial y_i^{k_i}} $ for $  k \in \R^{d+1} $. Until the end of the section, $ R $ is an admissible map and $ M $ is a renormalisation map built from $ R $.  

As in \cite{reg}, we want a renormalised model $ (\Pi^M_{x},\Gamma_{xy}^M) $ constructed from a map $ \PPi $ satisfying the following property: 
\begin{equ}[e:piM]
 \PPi^M \tau =\PPi M \tau.
\end{equ}
We define the linear map $\PPi^M $ by:
\begin{equ}[e:Pim]
  \left\{ \begin{aligned}
  & (\PPi^{M^{\circ}} \one)(y) = 1 ,  \qquad
  (\PPi^{M^{\circ}} X_i)(y)  = y_i,  \qquad
  (\PPi^{M^{\circ}} \Xi_{\mfl})(y) = \xi_{\mfl}(y), \\
  & (\PPi^{M^{\circ}} \CI^{\Labhom}_{k} \tau)(y) = 
  \int D^{k} K_{\Labhom}(y-z) (\PPi^{M} \tau)(z) dz,
  \\
  &  (\PPi^{M^{\circ}} \tau \bar{\tau} )(y) =  (\PPi^{M^{\circ}} \tau )(y) 
  (\PPi^{M^{\circ}} \bar{\tau} )(y) , \qquad (\PPi^{M} \tau )(y) = 
(\PPi^{M^{\circ}} R  \tau )(y), 
  \end{aligned} \right.
\end{equ}
where the recursive definition is the same as for $ M $.  The definition of $ \PPi^M  $ is really close to the definition of $\PPi $. The main difference is that $\PPi^M $ is no longer multiplicative because we have to renormalise some ill-defined products by subtracting diverging terms which is performed by the action of $ R $.
\begin{remark}
We have chosen the definition \eqref{e:Pim} for $\PPi^{M} $ instead of \eqref{e:piM} because it contains the definition of $\PPi $ when $ R = \id $.
Moreover, the recursive formula for the product is really close to the definition of $ \Pi_x^{M} $ and this fact is useful for the proofs.
\end{remark}

\begin{proposition} We have the following identities: $\PPi^M \tau =\PPi M \tau
 $ and $\PPi^{M^{\circ}} \tau   =\PPi M^{\circ} \tau$.
\end{proposition}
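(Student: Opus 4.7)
The plan is to prove both identities $\PPi^{M^\circ}\tau = \PPi M^\circ\tau$ and $\PPi^M\tau = \PPi M\tau$ simultaneously by induction on the partial order $<_\CT$ from \eqref{order}. For the base cases $\tau \in \{\one, X_i, \Xi_\mfl\}$, we have $M\tau = M^\circ\tau = \tau$, and the first three lines of the defining recursion \eqref{e:Pim} coincide with the defining formulas of the canonical model $\PPi$, so both identities are immediate.

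For $\tau = \CI_k^\Labhom(\sigma)$, note that $\sigma <_\CT \tau$ since $\CI_k^\Labhom$ preserves the noise count $\|\cdot\|$ but strictly raises the degree. Applying the definition of $\PPi^{M^\circ}$ on $\CI_k^\Labhom(\sigma)$, using the inductive hypothesis $\PPi^M\sigma = \PPi M\sigma$, and then invoking the admissibility of $\PPi$ together with the identity $M^\circ\CI_k^\Labhom(\sigma) = \CI_k^\Labhom(M\sigma)$ yields the first claim; the second follows because $R$ is the identity on elementary symbols. For $\tau = \prod_i \tau_i$ a product of at least two elementary factors, Proposition~\ref{pro:order} gives $\tau_i <_\CT \tau$, and the built-in multiplicativity of $\PPi^{M^\circ}$ in \eqref{e:Pim}, combined with the multiplicativity of $\PPi$ and $M^\circ$, yields $\PPi^{M^\circ}\tau = \PPi M^\circ\tau$ from the inductive hypothesis on each $\tau_i$. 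To obtain $\PPi^M\tau = \PPi M\tau$, I would write $\PPi^M\tau = \PPi^{M^\circ}R\tau$ and split $R\tau = \tau + (R\tau-\tau)$: the first summand is handled by the identity just established, while the remainder lies strictly below $\tau$ in $<_\CT$ since $\|R\tau - \tau\|<\|\tau\|$ by condition~(3) of Definition~\ref{def_adm} and by the conventions of \eqref{e:defNorms}, so the inductive hypothesis closes the argument.

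The main point requiring care is the sequencing within each inductive step: the identity for $\PPi^{M^\circ}\tau$ must be verified before that for $\PPi^M\tau$ on the same $\tau$, because the reduction $\PPi^M = \PPi^{M^\circ}\circ R$ in the product case applies the former identity to $\tau$ itself, with only the remainder $R\tau - \tau$ being strictly smaller in $<_\CT$. Once the two identities are established on canonical basis vectors, both extend to arbitrary elements of $\CT$ by linearity of all the maps involved, completing the proof.
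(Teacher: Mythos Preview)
Your proof is correct and follows essentially the same approach as the paper: induction along the partial order $<_\CT$, with the same treatment of the base cases, the integrator case via $\sigma <_\CT \CI_k^\Labhom(\sigma)$, and the product case via the splitting $R\tau = \tau + (R\tau - \tau)$ together with Proposition~\ref{pro:order}. Your explicit remark about the sequencing (establishing the $\PPi^{M^\circ}$ identity on $\tau$ before the $\PPi^M$ identity) makes the inductive structure slightly clearer than in the paper's version.
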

\begin{proof}
We proceed again by induction.  It's obvious for $ \one $, $ X_{i} $ and $ \Xi_{\mfl} $.
For $ \tau =\I^{\Labhom}_{k}(\tau') $,  by the induction hypothesis the claim holds for $ \tau' $ 
because $ \Vert \I^{\Labhom}_k(\tau') \Vert = \Vert \tau' \Vert $ and $ | \I^{\Labhom}_k(\tau') |_{\s} > | \tau' |_{\s}. $ 
We have: 
\begin{equs}
 (\PPi^{M} \CI_{k}^{\Labhom} (\tau'))(y)  & = 
  \int D^{k} K_{\Labhom}(y-z) (\PPi^{M} \tau')(z) dz  =  \int D^{k} K_{\Labhom}(y-z) (\PPi M \tau')(z) dz \\
  & = (\PPi \CI^{\Labhom}_{k}( M \tau'))(y) = (\PPi M \CI^{\Labhom}_{k} (\tau'))(y).
  \end{equs}
For $ \tau = \prod_i \tau_i $ product of elementary symbols, we obtain by applying the induction hypothesis on $ R \tau - \tau $ and the $ \tau_i $: 
\begin{equs}
(\PPi^{M^{\circ}}  \tau )(y) = \prod_i (\PPi^{M^{\circ}}  \tau_i )(y) = \prod_i (\PPi {M^{\circ}}  \tau_i )(y) =
 (\PPi {M^{\circ}}  \tau )(y)
\end{equs}
and 
\begin{equs}
 (\PPi^{M} \tau )(y)&  = 
  (\PPi^{M^{\circ}} (R\tau - \tau) )(y) + (\PPi^{M^{\circ}}  \tau )(y)    \\
& = (\PPi {M^{\circ}} (R\tau - \tau) )(y) +  (\PPi {M^{\circ}}  \tau )(y) = (\PPi M \tau )(y) 
\end{equs}
which conclude the proof.
\end{proof}

 The  renormalised model $ (\Pi^{M},\Gamma^{M}) $ associated to $ M=M_R $ is given by  
 \[
 \label{e:Pi}
\left\{
\begin{aligned}
& (\Pi_x^{M^{\circ}} \one)(y) = 1  , \qquad (\Pi_{x}^{M^{\circ}} \Xi_{\mfl})(y)  = \xi_{\mfl}(y) , \qquad
  (\Pi_{x}^{M^{\circ}} X_i)(y) = y_i-x_i, 
\\ &  (\Pi_{x}^{M^{\circ}} \CI^{\Labhom}_{k} \tau)(y) =  
  \int D^k K_{\Labhom}(y-z) \Pi^{M}_{x}(\tau)(z) dz - \sum_{\ell \in \N^{d+1}} 
  \frac{(y-x)^{\ell}}{\ell!} f_{x}^{M}({\mathcal J}^{\Labhom}_{k+\ell}(\tau)), \\
&   (\Pi_{x}^{M}  \tau )(y) = 
  (\Pi_{x}^{M^{\circ}} R  \tau )(y), \qquad
    (\Pi_{x}^{M^{\circ}}  \tau \bar{\tau} )(y) = 
   (\Pi_{x}^{M^{\circ}}   \tau )(y)  (\Pi_{x}^{M^{\circ}}   \bar{\tau} )(y),
   \end{aligned} \right.
\]

where $ f_{x}^{M} \in \CT_+^* $  is defined by 
\[
\left\{ \begin{aligned}
  & f_{x}^{M}(X_i) = x_i, \qquad  f_{x}^{M}(\tau \bar{\tau}) = f_{x}^{M}(\tau) f_{x}^{M}(\bar{\tau}),
\\  & f_{x}^{M}({\mathcal J}^{\Labhom}_{k}(\tau))  =  \un{(|{\mathcal I}^{\Labhom}_{k}(\tau)|_{\s}>0)}
  \int  D^{k} K_{\Labhom}(x-z) (\Pi_{x}^{M}\tau)(z) dz.
  \end{aligned} \right.
\]

We also define
\[
\left\{   \begin{aligned}
   & \Gamma_{xy}^M X_i = X_i + (x_i-y_i), \qquad \Gamma_{xy}^M \Xi_{\mfl} = \Xi_{\mfl}, \qquad
   \Gamma_{xy}^M (\tau \bar{\tau}) = ( \Gamma_{xy}^M \tau)  ( \Gamma_{xy}^M \bar{\tau}), \\
   & \Gamma_{xy}^M \CI_k^{\Labhom}(\tau) 
   = \CI_k^{\Labhom}(\Gamma_{xy}^M \tau) - \sum_{\ell \in \N^{d+1}} \frac{(X+x-y)^{\ell}}{\ell!}  f_{y}^{M}(\mathcal{J}^{\Labhom}_{k+\ell}(\tau)) +\sum_{\ell \in \N^{d+1}} \frac{X^\ell}{\ell!}
  f_{x}^{M}(\mathcal{J}^{\Labhom}_{k+\ell}(\Gamma_{xy} \tau )).
   \end{aligned} \right.
\]
and
\[
\left\{ \begin{aligned}
  & g^M_{x}(X_i) = -x_i, \qquad  g^M_{x}( \tau \bar \tau) =  g^M_{x}( \tau ) g^M_x(\bar \tau), 
\\  & g^M_{x}( \CJ_k^{\Labhom}(\tau))  =  -\sum_{\ell \in \N^{d+1} } \frac{(-x)^\ell}{\ell!} f^M_x(\CJ^{\Labhom}_{k+\ell}(\tau)).
  \end{aligned} \right.
\]

\begin{proposition}
The $ \Gamma^M $ operator is also given by: 
\[
  \Gamma^M_{xy} = (F^M_x)^{-1} \circ F^M_{y}
\]
where $ F^{M}_{x} = \Gamma_{g_x^M} $. Moreover, another equivalent recursive definition is:
\begin{equation}\label{another_gamma}
\left\{   \begin{aligned}
   & \Gamma_{xy}^M X_i = X_i + (x_i-y_i), \qquad \Gamma_{xy}^M \Xi_{\mfl} = \Xi_{\mfl}, \qquad
   \Gamma_{xy}^M (\tau \bar{\tau}) = ( \Gamma_{xy}^M \tau)  ( \Gamma_{xy}^M \bar{\tau}) \\
   & \Gamma_{xy}^M \CI^{\Labhom}_k(\tau) 
   = \CI^{\Labhom}_k(\Gamma_{xy}^M \tau) - \sum_{|\ell|_{\s}<| \CI^{\Labhom}_k(\tau) |_{\s} } (\Pi_{x}^M  \CI^{\Labhom}_{k+\ell}( \Gamma_{xy}^M \tau))(y) \frac{(X+x-y)^{\ell}}{\ell !} .
   \end{aligned} \right.
\end{equation}
\end{proposition}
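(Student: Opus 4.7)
The plan is to establish both assertions by induction on the partial order $<_{\mathcal{T}}$, reducing each identity on $\CI^{\Labhom}_{k}(\tau)$ to a Vandermonde-type regrouping combined with the explicit formulas for $\Gamma_g$, $g_1\circ g_2$ and $g^{-1}$ established in Section~\ref{section 2}.

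For the identification $\Gamma^M_{xy} = (F^M_x)^{-1}\circ F^M_y$, I would show that $\Gamma_{(g^M_x)^{-1} \circ g^M_y}$ satisfies the same recursive relations as $\Gamma^M_{xy}$; by the group property proved in Section~\ref{section 2} this object equals $\Gamma_{(g^M_x)^{-1}}\Gamma_{g^M_y} = (F^M_x)^{-1} F^M_y$. The base cases $\one, X_i, \Xi_{\mfl}$ are immediate once one notes that $g^M_x(X_i) = -x_i$ so $(g^M_x)^{-1}(X_i) = x_i$, and multiplicativity on $\tau\bar\tau$ is built into both sides. For $\CI^{\Labhom}_{k}(\tau)$, I would unfold $\Gamma_{(g^M_x)^{-1}}\Gamma_{g^M_y}\CI^{\Labhom}_{k}(\tau)$ using the recursive formula for $\Gamma_g$, apply \eqref{g-1} to rewrite $(g^M_x)^{-1}(\CJ^{\Labhom}_{k+\ell}(\Gamma_{g^M_y}\tau))$ in terms of $g^M_x$ acting on $\CJ^{\Labhom}_{\bullet}(\Gamma^M_{xy}\tau)$ (the inductive hypothesis identifying the subtree action), and then convert $g^M_x, g^M_y$ back into $f^M_x, f^M_y$ through their definitions. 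This yields two double sums indexed by $(\ell,m)$ which collapse by $\sum_{\ell+m=p} \frac{a^\ell b^m}{\ell!\,m!} = \frac{(a+b)^p}{p!}$ taken with $(a,b)=(X+x,-y)$ and $(a,b)=(X+x,-x)$ respectively, reproducing exactly the two sums $-\sum_p \frac{(X+x-y)^p}{p!}f^M_y(\CJ^{\Labhom}_{k+p}(\tau))$ and $\sum_p \frac{X^p}{p!}f^M_x(\CJ^{\Labhom}_{k+p}(\Gamma^M_{xy}\tau))$ appearing in the defining formula for $\Gamma^M_{xy}$.

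For the alternative formula \eqref{another_gamma}, the terms $\CI^{\Labhom}_{k}(\Gamma^M_{xy}\tau)$ already agree, so only the remainders need matching. Because $\CI^{\Labhom}_{k+\ell}(\sigma)$ is elementary, $R$ acts trivially on it, so $(\Pi^M_x\CI^{\Labhom}_{k+\ell}(\Gamma^M_{xy}\tau))(y) = (\Pi^{M^\circ}_x\CI^{\Labhom}_{k+\ell}(\Gamma^M_{xy}\tau))(y)$, which expands through the defining formula for $\Pi^{M^\circ}_x$ into an integral of $D^{k+\ell}K_{\Labhom}(y-\cdot)\,\Pi^M_x\Gamma^M_{xy}\tau$ minus a Taylor remainder $\sum_m \frac{(y-x)^m}{m!}f^M_x(\CJ^{\Labhom}_{k+\ell+m}(\Gamma^M_{xy}\tau))$. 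Invoking $\Pi^M_x\Gamma^M_{xy} = \Pi^M_y$ (a smooth-model identity provable directly or obtainable from Part 1) reduces the integral to $f^M_y(\CJ^{\Labhom}_{k+\ell}(\tau))$ precisely when $|\CI^{\Labhom}_{k+\ell}(\tau)|_{\s} > 0$, i.e.\ $|\ell|_{\s} < |\CI^{\Labhom}_{k}(\tau)|_{\s}$. Setting $p = \ell + m$ and applying the same binomial identity with $(a,b)=(X+x-y,y-x)$ collapses the resulting double sum into $\sum_p \frac{X^p}{p!}f^M_x(\CJ^{\Labhom}_{k+p}(\Gamma^M_{xy}\tau))$, matching the original definition of $\Gamma^M_{xy}$.

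The main obstacle is the careful bookkeeping of summation ranges: one must verify that the restriction $|\ell|_{\s} < |\CI^{\Labhom}_{k}(\tau)|_{\s}$ imposed in \eqref{another_gamma} is compatible with the unrestricted Vandermonde regrouping. This compatibility relies on the monotonicity $|\ell|_{\s} \leq |p|_{\s}$ for $\ell \leq p$ (valid because the scaling $\s$ is positive) together with the fact that $\Gamma^M_{xy}\tau$ lives in degrees $\leq |\tau|_{\s}$, which ensures $f^M_x(\CJ^{\Labhom}_{k+p}(\Gamma^M_{xy}\tau)) = 0$ whenever $|p|_{\s} \geq |\CI^{\Labhom}_{k}(\tau)|_{\s}$, so that the restricted and unrestricted sums agree and the Vandermonde collapse is legitimate.
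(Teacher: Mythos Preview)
Your proposal is correct and follows essentially the same route as the paper: for the first identity you unfold $\Gamma_{(g^M_x)^{-1}}\Gamma_{g^M_y}$ via the recursive formula for $\Gamma_g$ and the explicit inverse \eqref{g-1}, then collapse double sums by Vandermonde, exactly as the paper does; for \eqref{another_gamma} you expand $(\Pi^M_x\CI^{\Labhom}_{k+\ell}(\Gamma^M_{xy}\tau))(y)$, invoke $\Pi^M_x\Gamma^M_{xy}=\Pi^M_y$, and regroup, which is the paper's computation of the quantity it calls $A^M_{y,x,k,\ell}$ run in the reverse direction. Your explicit treatment of the summation-range issue (showing $f^M_x(\CJ^{\Labhom}_{k+p}(\Gamma^M_{xy}\tau))$ vanishes for $|p|_{\s}\ge|\CI^{\Labhom}_k(\tau)|_{\s}$) is a point the paper leaves implicit.
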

\begin{proof} We have
\[
\begin{aligned}
(g_{x}^{M})^{-1}(\mathcal{J}^{\Labhom}_{k}(\tau)) =& 
-  \sum_{\ell \in \N^{d+1} } \frac{(-g_{x}^{M}(X))^\ell}{\ell!} g_{x}^{M}(\mathcal{J}^{\Labhom}_{k+\ell}(\Gamma_{(g_{x}^{M})^{-1}} \tau ))
\\ =& 
-  \sum_{\ell,m \in \N^{d+1}} \frac{(x)^\ell}{\ell!} \frac{(-x)^m}{m!} f_{x}^{M}(\mathcal{J}^{\Labhom}_{k+\ell + m }(\Gamma_{(g_{x}^{M})^{-1}} \tau )) \\
= & -   f_{x}^{M}(\mathcal{J}^{\Labhom}_{k }(\Gamma_{(g_{x}^{M})^{-1}} \tau )).
 \end{aligned}
\]
%\[
%  (g_1 \circ g_2)(\mathcal{J}_k(\tau)) = g_2(\mathcal{J}_{k}(\tau)) + \sum_l \frac{(-g_2(X))^l}{l!} g_1(\mathcal{J}_{k+l}(\Gamma_{g_2} \tau )).
%\]
Since by definition
\[
\Gamma_g \CI^{\Labhom}_k(\tau)  =  \CI^{\Labhom}_k(\Gamma_g \tau) + \sum_{\ell \in \N^{d+1}} \frac{X^{\ell}}{\ell!}
  g(\mathcal{J}^{\Labhom}_{k+\ell}(\tau)),
\]
then
\[
\Gamma_{g_{y}^{M}} \CI_k^{\Labhom}(\tau)  =   \CI_k^{\Labhom}(\Gamma_{g_{y}^{M}} \tau) - \sum_{\ell \in \N^{d+1}} \frac{(X-y)^{\ell}}{\ell!}   f_{y}^{M}(\mathcal{J}^{\Labhom}_{k+\ell}(\tau))
\]
and
\[
\begin{aligned}
\Gamma_{(g_{x}^{M})^{-1}} \CI_k^{\Labhom}(\tau)  & =  \CI_k^{\Labhom}(\Gamma_{(g_{x}^{M})^{-1}} \tau) + \sum_{\ell \in \N^{d+1}} \frac{X^{\ell}}{\ell!}
  (g_{x}^{M})^{-1}(\mathcal{J}^{\Labhom}_{k+\ell}(\tau))
  \\ & = \CI^{\Labhom}_k(\Gamma_{(g_{x}^{M})^{-1}} \tau) - \sum_{\ell \in \N^{d+1}} \frac{X^{\ell}}{\ell!}
  f_{x}^{M}(\mathcal{J}^{\Labhom}_{k+l}(\Gamma_{(g_{x}^{M})^{-1}} \tau )),
  \end{aligned}
\]
so that
\[
\begin{aligned}
\Gamma_{(g_{x}^{M})^{-1}}\Gamma_{g_{y}^{M}}  \CI_k^{\Labhom}(\tau)  = &  \CI_k^{\Labhom}(\Gamma_{(g_{x}^{M})^{-1}}\Gamma_{g_{y}^{M}} \tau) - \sum_{\ell \in \N^{d+1}} \frac{X^{\ell}}{\ell!}
  f_{x}^{M}(\mathcal{J}^{\Labhom}_{k+l}(\Gamma_{(g_{x}^{M})^{-1}}\Gamma_{g_{y}^{M}} \tau ))
\\ & 
+ \sum_{\ell \in \N^{d+1}} \frac{(X+x-y)^{\ell}}{\ell!}   f_{y}^{M}(\mathcal{J}^{\Labhom}_{k+\ell}(\tau)). 
\end{aligned}
\]
Therefore, $\Gamma_{(g_{x}^{M})^{-1}}\Gamma_{g_{y}^{M}}$ satisfies the same recursive property as $\Gamma^M_{xy}$. 

Finally, we need to prove \eqref{another_gamma}. We have
\[
 \Gamma^{M}_{xy} \CI^{\Labhom}_{k}(\tau) = 
 \CI^{\Labhom}_{k}(\Gamma^{M}_{xy} \tau) 
  +  \sum_{\ell \in \N^{d+1}} \frac{(X+x-y)^{\ell}}{\ell!}  A^{M}_{y,x,k,\ell},
\]
where 
\[
A^{M}_{y,x,k,\ell} = f^M_{y}(\mathcal{J}^{\Labhom}_{k+\ell}(\tau)) - \sum_{m \in \N^{d+1}} \frac{(y-x)^m}{m!} 
  f^M_{x}(\mathcal{J}^{\Labhom}_{k+\ell+m}( \Gamma^M_{xy} \tau)).
\]
We write $ \Gamma^{M}_{xy} \tau = \sum_i \tau_i $ with $ |\tau_i |_{\s} \leq |\tau|_{\s} $; 
note that $\Pi^M_{y} \tau=\Pi^M_{x} \Gamma_{xy}^M \tau=\sum_i \Pi^M_{x} \tau_i $, and
$A^{M}_{y,x,k,\ell}$ is zero unless $ | \mathcal{J}^{\Labhom}_{k+\ell}(\tau) |_{\s} > 0 $, and if this condition is satisfied 
then
\[
\begin{aligned}
& A^{M}_{x,y,k,\ell} =\int D^{k+\ell} K_{\Labhom}(y-z) (\Pi^M_{y} \tau)(z)  dz   - \sum_i \sum_{m \in \N^{d+1}} \frac{(y-x)^m}{m!} 
  f^M_{x}(\mathcal{J}^{\Labhom}_{k+\ell+m}(  \tau_i))
\\ &  =  \sum_i \left[ \int D^{k+\ell} K_{\Labhom}(y-z) (\Pi^M_{x} \tau_i)(z)  dz - \sum_{m \in \N^{d+1}} \frac{(y-x)^m}{m!} 
  f^M_{x}(\mathcal{J}^{\Labhom}_{k+\ell+m}(  \tau_i))\right]
 \\ & = \sum_i \Pi_x^{M}(\mathcal{I}^{\Labhom}_{k+\ell}(\tau_i))(y)
 = \Pi_x^M(\CI^{\Labhom}_{k+\ell}(\Gamma^{M}_{xy} \tau))(y).
  \end{aligned}
\]
This allows us to conclude.
\end{proof}

\begin{remark} The interest of the previous formula for $ \Gamma^M $ is to show a strong link with the definition of $ \Pi_x^M $. Moreover it simplifies the proof of the analytical bounds of the model. Indeed, analytical bounds on $ \Pi_x^M $ give the bounds for $ \Gamma^M $.  
\end{remark}

\begin{proposition}
The following identities hold: $ \Pi_x^M =\PPi^M F_x^M $ and 
 $ \Pi_x^{M^{\circ}} =\PPi^{M^{\circ}} F_x^M $. 
\end{proposition}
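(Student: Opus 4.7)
The plan is a joint induction on both identities with respect to the partial order $<_{\CT}$, handling the two statements simultaneously since $\Pi_x^M = \Pi_x^{M^\circ} R$ and $\PPi^M = \PPi^{M^\circ} R$ (see \eqref{e:Pim} and \eqref{e:Pi}). The main observation is that once the $M^\circ$-identity is established at some level, the $M$-identity at the same level follows because $R$ is admissible and therefore commutes with every element of the structure group: applying this to $F_x^M \in \CG_+$ gives $\PPi^M F_x^M \tau = \PPi^{M^\circ} R F_x^M \tau = \PPi^{M^\circ} F_x^M R \tau = \Pi_x^{M^\circ} R \tau = \Pi_x^M \tau$.

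For the base cases $\tau \in \{\one, X_i, \Xi_\mfl\}$ the identity is immediate: $F_x^M$ fixes $\one$ and $\Xi_\mfl$, and sends $X_i$ to $X_i - x_i$, which matches the definitions of $\Pi_x^{M^\circ}$ on these symbols. For a product $\tau\bar\tau$ of elementary symbols one uses the multiplicativity of both $\PPi^{M^\circ}$ and $F_x^M = \Gamma_{g_x^M}$ to reduce to the inductive hypothesis applied to each elementary factor $\tau_i <_{\CT} \tau\bar\tau$ (cf.\ Proposition~\ref{pro:order}). The critical case is $\tau = \CI_k^\Labhom(\tau')$, for which we expand
\[
F_x^M \CI_k^\Labhom(\tau') = \CI_k^\Labhom(F_x^M \tau') + \sum_{\ell \in \N^{d+1}} \frac{X^\ell}{\ell!}\, g_x^M(\CJ_{k+\ell}^\Labhom(\tau')),
\]
apply $\PPi^{M^\circ}$, and use the inductive hypothesis $\PPi^M F_x^M \tau' = \Pi_x^M \tau'$ to identify the convolution term with the one appearing in the definition of $\Pi_x^{M^\circ}\CI_k^\Labhom(\tau')$.

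The remaining step is the purely algebraic check that the polynomial correction terms match, i.e.\
\[
\sum_{\ell} \frac{y^\ell}{\ell!}\, g_x^M(\CJ_{k+\ell}^\Labhom(\tau')) = -\sum_{\ell} \frac{(y-x)^\ell}{\ell!}\, f_x^M(\CJ_{k+\ell}^\Labhom(\tau')).
\]
This is obtained by inserting the defining formula $g_x^M(\CJ_k^\Labhom(\sigma)) = -\sum_m \tfrac{(-x)^m}{m!} f_x^M(\CJ_{k+m}^\Labhom(\sigma))$ and recognising the resulting double sum, via the change of index $j = \ell + m$ and the binomial identity $\sum_{\ell+m=j}\tfrac{y^\ell(-x)^m}{\ell!\,m!} = \tfrac{(y-x)^j}{j!}$, as the right-hand side. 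I expect this combinatorial identification to be the only non-bookkeeping step; every other ingredient (multiplicativity, commutation of $R$ with $\CG_+$, the recursion of $F_x^M$ on planted trees) is supplied by the earlier material and by admissibility of $R$.
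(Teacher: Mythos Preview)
Your proposal is correct and follows essentially the same inductive route as the paper's proof. The only visible difference is presentational: you make explicit the binomial rearrangement that turns $\sum_\ell \tfrac{y^\ell}{\ell!}\, g_x^M(\CJ_{k+\ell}^\Labhom(\tau'))$ into $-\sum_j \tfrac{(y-x)^j}{j!}\, f_x^M(\CJ_{k+j}^\Labhom(\tau'))$, whereas the paper jumps over this step in one line; and you package the passage from the $M^\circ$-identity to the $M$-identity as a general observation up front, while the paper splits $R\tau = (R\tau-\tau)+\tau$ at the product step and applies the induction hypothesis to each piece separately. One small clarification worth adding to your write-up: when you say ``the $M$-identity at the same level follows'', note that applying the $M^\circ$-identity to $R\tau$ uses it on $\tau$ itself (just established) \emph{and} on $R\tau-\tau <_\CT \tau$ (inductive hypothesis), so the inductive structure is exactly the paper's.
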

\begin{proof}
We proceed by induction. The proof is obvious for $ \tau \in \lbrace \one,\Xi_{\mfl},X_i \rbrace $. For $ \tau = \CI^{\Labhom}_k(\tau') $, we apply the induction hypothesis on $ \tau' $, it follows:  
\[
\begin{aligned}
 (\Pi_x^{M^{\circ}} \tau )(y) & = 
 \int D^k K_{\Labhom}(y-z) (\Pi^{M}_{x}\tau')(z) dz - \sum_{\ell \in \N^{d+1}} 
  \frac{(y-x)^{\ell}}{\ell!} f_{x}^{M}({\mathcal J}^{\Labhom}_{k+\ell}(\tau')) \\
  & =  \int D^k K_{\Labhom}(y-z) (\Pi^{M} F_x^M \tau')(z) dz - \sum_{\ell \in \N^{d+1}} 
  \frac{(y-x)^{\ell}}{\ell!} f_{x}^{M}({\mathcal J}^{\Labhom}_{k+\ell}(\tau')) \\
  & =( \Pi^M F_x^M \CI^{\Labhom}_k(\tau'))(y).
  \end{aligned}
\]
It remains to check the identity on a product $ \tau = \prod_i \tau_i $ where each $\tau_i$ is elementary.
We have 
 \begin{equ}
\PPi^M F_x^M \tau =  \PPi^{M^{\circ}} R   F_x^M \tau =\PPi^{M^{\circ}}    F_x^M R \tau
 \end{equ}
since by definition $F^{M}_{x} = \Gamma_{g_x^M} \in G $ and $ R $ commutes with $ G $. Then by applying the induction hypothesis on $ R \tau - \tau $ and  the $ \tau_i $, we have
\begin{equs}
\Pi^{M^{\circ}}    F_x^M  \tau = \prod_i\PPi^{M^{\circ}}    F_x^M  \tau_i = \prod_i\PPi_x^{M^{\circ}}     \tau_i = \PPi_x^{M^{\circ}}     \tau
\end{equs}
and
\begin{equs}
\PPi^{M^{\circ}}    F_x^M R \tau & =\PPi^{M^{\circ}}    F_x^M (R \tau - \tau) +\PPi^{M^{\circ}}    F_x^M  \tau 
\\ & = \PPi_x^{M^{\circ}}     (R \tau - \tau) + \PPi_x^{M^{\circ}}    \tau
= \PPi_x^{M^{\circ}}     R \tau  = \PPi_x^{M}     \tau.
\end{equs}

\end{proof}

\begin{proposition}
If $ R $ is an admissible map then $ (\Pi^M,\Gamma^M) $ is a model.
\end{proposition}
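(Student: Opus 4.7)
The plan is to verify the three conditions in the definition of a model: the algebraic identities $\Gamma_{xy}^M\Gamma_{yz}^M = \Gamma_{xz}^M$ and $\Pi_x^M\Gamma_{xy}^M = \Pi_y^M$, together with the analytical bounds. The algebraic identities are almost free from the preceding results. Since the previous proposition gives $\Gamma_{xy}^M = (F_x^M)^{-1}\circ F_y^M$ with $F_x^M = \Gamma_{g_x^M}\in G$, composition yields $\Gamma_{xy}^M\Gamma_{yz}^M = (F_x^M)^{-1}F_y^M(F_y^M)^{-1}F_z^M = (F_x^M)^{-1}F_z^M = \Gamma_{xz}^M$, while $\Pi_x^M\Gamma_{xy}^M = \PPi^M F_x^M(F_x^M)^{-1}F_y^M = \PPi^M F_y^M = \Pi_y^M$. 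The grading property $\Gamma_{xy}^M\tau-\tau\in\bigoplus_{\beta<|\tau|_{\s}}\CT_\beta$ follows from the first assertion of the proposition on the structure group applied to $g_x^M$ and $g_y^M$.

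The remaining task, the analytical bounds, will be carried out simultaneously on $\Pi_x^M\tau$ and $\Pi_x^{M^\circ}\tau$ by induction on the order $<_{\CT}$. For the base cases $\one,X_i,\Xi_{\mfl}$ the bounds are immediate, the noises being smooth. For $\tau = \CI_k^{\Labhom}(\tau')$, one uses the now-classical argument from \cite{reg}: the polynomial subtraction $\sum_\ell (y-x)^\ell/\ell!\, f_x^M(\CJ_{k+\ell}^\Labhom(\tau'))$ is exactly the Taylor polynomial of $z\mapsto \int D^kK_\Labhom(y-z)(\Pi_x^M\tau')(z)\,dz$ truncated at order $|\CI_k^\Labhom\tau'|_\s$, so the inductive bound on $\Pi_x^M\tau'$ combined with the kernel assumption on $K_\Labhom$ gives an estimate of order $\|y-x\|_\s^{|\CI_k^\Labhom\tau'|_\s}$.

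The real work is the product case $\tau=\prod_i\tau_i$ for elementary $\tau_i$, and this is where admissibility of $R$ is essential. Writing $\Pi_x^M\tau = \Pi_x^{M^\circ}(\tau) + \Pi_x^{M^\circ}(R\tau-\tau)$, the first term factorises as $\prod_i\Pi_x^{M^\circ}\tau_i$; applying the induction hypothesis to each $\tau_i$ (which satisfies $\tau_i<_{\CT}\tau$ by Proposition~\ref{pro:order}) gives a product of bounds totalling $\|y-x\|_\s^{\sum_i|\tau_i|_\s} = \|y-x\|_\s^{|\tau|_\s}$. For the second term, admissibility of $R$ ensures both $R\tau-\tau <_{\CT}\tau$ and $|R\tau-\tau|_\s > |\tau|_\s$, so the induction hypothesis provides an estimate of order $\|y-x\|_\s^{|R\tau-\tau|_\s}$, which is stronger than the required $\|y-x\|_\s^{|\tau|_\s}$. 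This is precisely the point where the axioms in Definition~\ref{def_adm} are designed to fit.

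For the bounds on $\Gamma_{xy}^M$, the most convenient route is to use the alternative recursive expression \eqref{another_gamma}. The base cases and the multiplicative case are standard. For $\CI_k^\Labhom(\tau)$, the recursion expresses $\Gamma_{xy}^M\CI_k^\Labhom(\tau)$ in terms of $\CI_k^\Labhom(\Gamma_{xy}^M\tau)$ plus a sum of polynomials with coefficients $(\Pi_x^M\CI_{k+\ell}^\Labhom(\Gamma_{xy}^M\tau))(y)$. The inductive bounds on $\Pi_x^M$ established above, together with the inductive hypothesis on $\Gamma_{xy}^M\tau$, give exactly the required factors of $\|x-y\|_\s$. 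The main obstacle in the whole argument is reconciling the product case of $\Pi_x^M$ with the non-multiplicativity induced by $R$; this is resolved cleanly by conditions (3) and (4) of admissibility, which were designed precisely so that the corrections introduced by $R$ are of strictly higher degree than $\tau$ and involve strictly fewer noises, allowing the induction on $<_{\CT}$ to close.
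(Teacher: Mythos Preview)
Your proposal is correct and follows essentially the same route as the paper: the algebraic identities are derived from the previous proposition via $\Gamma_{xy}^M=(F_x^M)^{-1}F_y^M$ and $\Pi_x^M=\PPi^M F_x^M$, the analytical bounds on $\Pi_x^M$ are obtained by induction on $<_{\CT}$ using the splitting $\Pi_x^M\tau=\Pi_x^{M^\circ}(R\tau-\tau)+\Pi_x^{M^\circ}\tau$ for products together with properties (3) and (4) of admissibility, and the bounds on $\Gamma_{xy}^M$ are deduced from the recursive expression \eqref{another_gamma}. The only difference is that you spell out the algebraic verifications explicitly, whereas the paper simply refers to the previous proposition.
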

\begin{proof}
The algebraic relations are given by the previous proposition. It just remains to check the analytical bounds. For $ \tau = \Xi_{\mfl} $ or $ \tau = 
\CI_k^{\Labhom}(\tau')$, the proof is the same as in \cite[Prop. 8.27]{reg}. For $ \tau = \prod_i \tau_i $ a product of elementary symbols, we have
\[
\Pi_x^M \tau =  \Pi_x^{M^{\circ}} (R \tau - \tau ) +   \Pi_x^{M^{\circ}} \tau . 
\]
We apply the induction hypothesis on the $\tau_i $ and $ R \tau - \tau$: 
\begin{equs}
  \vert (\Pi^{M^{\circ}}_{x} \tau)(y) \vert = \prod_i \vert (\Pi^{M^{\circ}}_{x} \tau_i)(y) \vert \lesssim  
  \prod_i \Vert x-y \Vert_{\s}^{| \tau_i|_{\s}} = \Vert x-y \Vert_{\s}^{| \tau|_{\s}}  ,
  \\    \vert (\Pi^{M^{\circ}}_{x} (R \tau - \tau))(y) \vert \lesssim  
 \Vert x-y \Vert_{\s}^{\vert R \tau - \tau \vert_{\s}} \lesssim \Vert x-y \Vert_{\s}^{\vert \tau \vert_{\s}} .
 \end{equs}

It just remains the analytical bound for $ \Gamma^M $. We proceed by induction. For $ \tau = \Xi_{\mfl} $ or $ \tau = X_i $, the bound is obvious. Let $ \tau = \prod_i \tau_i $ where the $ \tau_i $ are elementary symbols. For $ \beta < | \tau |_{\s} $, we have
\[
\begin{aligned}
 \Vert \Gamma_{xy}^M \tau \Vert_{\beta} & = \sum_{\sum_{i} \alpha_{i} = \beta \atop 
 \alpha_{i} < \vert \tau_i \vert_{\s} }  \prod_i \Vert \Gamma_{xy}^M
 \tau_i \Vert_{\alpha_{i}} \\
 & \lesssim  \sum_{\sum_{i} \alpha_{i} = \beta \atop 
 \alpha_{i} < \vert \tau_i \vert_{\s} }  \prod_i   \| x-y\|_\s^{\vert \tau_i \vert_{\s} - \alpha_{i}  } \Vert \tau \Vert \lesssim \| x-y\|_\s^{\alpha - \beta}.
 \end{aligned}
\]
For $ \tau' = \mathcal{I}^{\Labhom}_{k}(\tau) $, the recursive definition \eqref{another_gamma} gives:
\[
 \Gamma_{xy}^M \CI^{\Labhom}_k( \tau) =  \CI^{\Labhom}_k(\Gamma_{xy}^M \tau) - \sum_{|\ell|_{\s} < |\CI^{\Labhom}_k( \tau)|_{\s}} (\Pi^M_{x}  \CI^{\Labhom}_{k+\ell}( \Gamma_{xy}^M \tau))(y) \frac{(X+x-y)^{\ell}}{\ell !} .
\]
 Let $\alpha<|\CI^{\Labhom}_k( \tau)|_{\s}$. If $ \alpha \in \R \setminus \N $, let us write $\Gamma_{xy}^M \tau=\tau+\sum_i \tau^i_{xy}$ with 
 \[
 |\tau^i_{xy}|_{\s}=\alpha_i<|\tau|_{\s}, \qquad \|\tau^i_{xy}\|_{\alpha_i}\lesssim \| x-y\|_\s^{|\tau|_{\s}-\alpha_i} ;
 \]
  then if 
\begin{equs}
\Vert \Gamma_{xy}^M \tau' \Vert_{\alpha} = 
\Vert \CI^{\Labhom}_k(\Gamma_{xy}^M \tau) \Vert_{\alpha} & \lesssim\sum_i\un{(\alpha_i+ | \Labhom |_{\s}-|k|_{\s}=\alpha)}\| x-y\|_\s^{|\tau|_{\s}-\alpha_i} \\ & \lesssim \|x-y\|_\s^{|\tau|_{\s}+| \Labhom |_{\s} - |k|_{\s} -\alpha}.
\end{equs}
Now, if $ \alpha \in \N $ and $ \alpha < |\CI^{\Labhom}_k( \tau)|_{\s}$ then 
 \[
 \begin{aligned}
  & \Vert \Gamma_{xy}^M \CI^{\Labhom}_k( \tau) \Vert_{\alpha}   = \left\vert 
 \sum_{ \alpha \leq | \ell |_{\s} < | \Labhom |_{\s} + |\tau|_{\s} -|k|_{\s} } \frac{(X+x-y)^{\ell}}{\ell !} (\Pi^M_{x}  \CI^{\Labhom}_{k+\ell}( \Gamma_{xy}^M \tau))(y) \right\vert  \\
 & \lesssim \sum_{ \alpha \leq | \ell |_{\s} < | \Labhom |_{\s} + |\tau|_{\s} -|k|_{\s} } \frac{\|x-y\|_\s^{ |\ell|_{\s}-\alpha}}{\ell!} \sum_{\gamma \leq  | \tau |_{\s}} \|x-y\|_\s^{| \Labhom |_{\s} + \gamma -|k|_{\s}-|\ell|_{\s}} \Vert \Gamma_{xy}^M  \tau\Vert_{| \Labhom |_{\s}+\gamma-|k|_{\s}} \\
 & \lesssim  \sum_{ \alpha \leq | \ell |_{\s} < | \Labhom |_{\s} + |\tau|_{\s} -|k|_{\s} } \frac{\|x-y\|_\s^{| \ell |_{\s}-\alpha}}{\ell !} \sum_{\gamma \leq  | \tau |_{\s}} \|x-y\|_\s^{| \Labhom |_{\s} + \gamma -|k|_{\s}-|\ell|_{\s}} \|x-y\|_\s^{|\tau|_{\s} - | \Labhom |_{\s} - \gamma + |k|_{\s}}  \\ 
 & \lesssim \|x-y\|_\s^{|\tau|_{\s}-\alpha}.
 \end{aligned}
 \]
\end{proof}

\begin{proposition} \label{prop_PiM}
We suppose that for every $ \tau = \mathcal{I}_{k}(\tau') \in\mathcal{T}$ such that $ |\tau |_{\s} < 0 $, we have $ (\Pi_x^M \tau)(x) = (\Pi_x M \tau)(x) $.  
Then
the following identities hold: $ (\Pi_x^M \tau )(x) = (\Pi_{x} M \tau )(x) $ and $ (\Pi_x^{M^{\circ}} \tau )(x) = (\Pi_{x} M^{\circ} \tau )(x) $  for every $ \tau \in \mathcal{T} $. 
\end{proposition}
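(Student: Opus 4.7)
The plan is to prove both identities simultaneously by induction on the partial order $<_{\mathcal{T}}$ on $\mathcal{T}$. The base cases $\tau \in \{\one, X_i, \Xi_{\mfl}\}$ are immediate because $R$, $M^\circ$ and $M$ all fix elementary symbols. A first key observation is that the two identities are equivalent modulo the inductive hypothesis: since $\Pi_x^M = \Pi_x^{M^\circ} R$ and $M = M^\circ R$, and since $R\tau - \tau <_{\mathcal{T}} \tau$ by admissibility of $R$, applying the inductive hypothesis to $R\tau - \tau$ shows that the $M$-identity at $\tau$ is equivalent to the $M^\circ$-identity at $\tau$. In each inductive step I therefore only need to verify one of them.

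For a non-elementary product $\tau = \prod_i \tau_i$ of elementary symbols, Proposition~\ref{pro:order} gives $\tau_i <_{\mathcal{T}} \tau$. Using multiplicativity of both $\Pi_x^{M^\circ}$ and the smooth model $\Pi_x$, together with $M^\circ \prod_i \tau_i = \prod_i M \tau_i$, the value $(\Pi_x^{M^\circ} \tau)(x)$ factors as $\prod_i (\Pi_x^{M^\circ} \tau_i)(x) = \prod_i (\Pi_x^M \tau_i)(x)$, where the second equality uses $R\tau_i = \tau_i$ for elementary factors. The inductive hypothesis on the $\tau_i$ then rewrites this as $\prod_i (\Pi_x M\tau_i)(x) = (\Pi_x M^\circ \tau)(x)$, closing the step.

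For a planted tree $\tau = \mathcal{I}_k^{\Labhom}(\tau')$ I would split on the sign of $|\tau|_{\s}$. The case $|\tau|_{\s} < 0$ is precisely the standing hypothesis. When $|\tau|_{\s} > 0$, evaluation at $y = x$ collapses the polynomial sum in the definition of $\Pi_x^{M^\circ}$ to the $\ell = 0$ term, and the subtraction $f_x^M(\mathcal{J}_k^{\Labhom}(\tau'))$ exactly cancels the remaining integral, so that $(\Pi_x^{M^\circ} \tau)(x) = 0$. The analogous component-wise calculation on a decomposition $M\tau' = \sum_j c_j \sigma_j$ shows $(\Pi_x M^\circ \tau)(x) = 0$, provided that $|\mathcal{I}_k^{\Labhom}(\sigma_j)|_{\s} > 0$ for every $j$. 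This in turn requires the auxiliary degree-monotonicity $|M\tau'|_{\s} \geq |\tau'|_{\s}$, which I would establish by a parallel induction using the third and fourth conditions of Definition~\ref{def_adm} and the multiplicativity of $M^\circ$ on products.

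The hard part will be making sure the integrator step really exhausts all sub-cases: the boundary value $|\mathcal{I}_k^{\Labhom}(\tau')|_{\s} = 0$ is not directly covered by the standing hypothesis, and one must either rule it out structurally (via the degree assignment on $\mfL$ together with the edge decoration $k$) or reconcile both sides directly, noting that the indicators $\un{(|\mathcal{I}_k^{\Labhom}(\cdot)|_{\s} > 0)}$ inside $f_x^M$ and $f_x$ vanish on the top component and that the matching of lower components again reduces to the inductive hypothesis. Once the $M^\circ$-identity is in place for $\tau$, the $M$-identity follows from the equivalence stated in the opening paragraph, completing the induction.
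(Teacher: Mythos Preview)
Your proposal is correct and follows essentially the same inductive scheme as the paper: base cases, then the product case via multiplicativity of $\Pi_x^{M^\circ}$ and $M^\circ$, then the planted-tree case split according to the sign of $|\CI_k^{\Labhom}(\tau')|_{\s}$, invoking the standing hypothesis for negative degree and the vanishing $(\Pi_x^{M^\circ}\CI_k^{\Labhom}\tau')(x)=0=(\Pi_x\CI_k^{\Labhom}M\tau')(x)$ for positive degree via $|M\tau'|_{\s}\ge|\tau'|_{\s}$. Your explicit observation that the $M$- and $M^\circ$-identities are interchangeable modulo the inductive hypothesis (since $R\tau-\tau<_{\CT}\tau$) is a clean repackaging of what the paper does implicitly, and your flagging of the boundary case $|\CI_k^{\Labhom}(\tau')|_{\s}=0$ is a point the paper simply omits (it is standard to assume degrees are chosen generically so that this does not occur).
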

\begin{proof}
We proceed by induction. For $ \tau \in \lbrace \one,\Xi_{\mfl},X_{i} \rbrace $, we have 
\[
 (\Pi_x^M \tau )(x) =  (\Pi_x \tau )(x) = (\Pi_x M \tau )(x).
\]
For $ \tau = \CI^{\Labhom}_{k}( \tau')  $ , if $ | \tau |_{\s} > 0 $ then the recursive definition of $ \Pi_x^M $ gives 
\[
\begin{aligned}
 (\Pi_{x}^{M} \CI^{\Labhom}_{k} \tau')(x)  & = 0\\
 ( \Pi_x M  \CI^{\Labhom}_{k} \tau')(x)&  = ( \Pi_x   \CI^{\Labhom}_{k} M \tau')(x) = 0.
  \end{aligned}
\]
For the second identity, we have used the fact that $ | M \tau' |_{\s} \geq |\tau' |_{\s} $.
Otherwise, if $ |\tau|_{\s} < 0 $ then the hypothesis allows us to conclude.
For an elementary product $ \tau =  \prod_i \tau_i $, it follows by using the induction hypothesis
\begin{equs}
(\Pi_x^{M^{\circ}} \tau)(x) & =  \prod_i (\Pi_x^{M^{\circ}}  \tau_i)(x) = \prod_i (\Pi_x M^{\circ} \tau_i)(x) =  (\Pi_x {M^{\circ}} \tau)(x)
 \\
(\Pi_x^{M} \tau)(x) & = (\Pi_x^{M^{\circ}} (R \tau - \tau) )(x) + (\Pi_x^{M^{\circ}} \tau)(x)
\\ & =  (\Pi_x {M^{\circ}} (R \tau - \tau) )(x) + (\Pi_x M^{\circ} \tau)(x) = (\Pi_x M \tau)(x).
\end{equs}

\end{proof}

\begin{remark}
Proposition \ref{prop_PiM} is crucial for deriving the renormalised equation in many examples. Indeed, the reconstruction map $ \mathcal{R}^{M} $ associated to the model $ (\Pi^M,\Gamma^M) $ is given for every $ \tau \in \mathcal{T} $ by: 
\[
 (\mathcal{R}^{M} \tau )(x) = (\Pi^M_{x} \tau)(x) = (\Pi_x M \tau)(x)
\]
because for every $ \tau \in \mathcal{T} $, $ \Pi_x \tau $ is a function. The result of Proposition~\ref{prop_PiM} has just been checked on  examples 
\cite{reg}, \cite{wong} and \cite{woKP} but not in a general setting. In general for $y\ne x$,  $ (\Pi_x^M \tau)( y)   $ is not necessarily equal to $(\Pi_x M \tau)(y)$ as mentioned in  \cite{reg}. But if you carry more information
on the decorated tree with an extended decoration, then this identity turns to be true see \cite[Thm. 6.15]{BHZ}. 
\end{remark}

We finish this section by establishing a link between the renormalisation maps introduced in \cite{reg}  and $  \mathfrak{R}_{ad}[\mathscr{T}] $.
From \cite[Lem.~8.43, Thm~8.44]{reg} and \cite[Thm~B.1]{woKP}, 
${\mathfrak R} \subset \mathcal{L}(\CT)$ is the set of maps $M$ such that
\begin{itemize}
\item One has $\CI_k^\Labhom M\tau =M \CI_k^\Labhom\tau$ and $MX^k \tau=X^k M \tau$
for all $\Labhom\in\Lab_+$, $k\in\N^{d+1}$, and $\tau \in \CT$.
\item Consider the (unique) linear operators $\DeltaM:\CT\to\CT\otimes\CT_+$ and $\hat M:\CT_+\to\CT_+$ 
such that $\hat M$ is an algebra morphism,  $\hat M X^k=X^k$ for all $k$, and such that,
for every $\tau, \sigma \in \CT$ and $k \in \N^{d+1}$ with $|\CJ^\Labhom_k(\sigma)|_{\s}>0$,
\begin{equs}
\hat M \tilde\CJ^\Labhom_k(\sigma) =\CM_+(\tilde\CJ^\Labhom_k&\otimes\id)\DeltaM\sigma\;,  \label{e:hatM} \\
(\id \otimes\CM_+)(\Delta\otimes\id)\DeltaM \tau &=(M\otimes\hat M)\Delta\tau\;, \label{e:hatM2}
\end{equs}
where $\tilde\CJ^\Labhom_k \colon \CT \to \CT_+$ is defined for every $ \tau \in \CT $ by $ \tilde\CJ^\Labhom_k (\tau) = \sum_{\ell \in \N^{d+1}} \frac{(-X)^{\ell}}{\ell !} \CJ^\Labhom_{k+ \ell} (\tau)  $ and $ \CM_+ : \CT_+ \otimes \CT_+ \rightarrow \CT_+ $ is the product on $ \CT_+ $, $ \CM_+(\tau_1 \otimes \tau_2) = \tau_1 \tau_2 $.
Then, for all $\tau\in\CT$, one can write
$\DeltaM \tau= \sum \tau^{(1)}\otimes\tau^{(2)}$
with $| \tau^{(1)}|_{\s} \ge | \tau|_{\s}$. Having this latest property, $ \DeltaM $ is called an upper triangular map.
\end{itemize}

Let $ M \in \mathfrak{R}_{ad}[\mathscr{T}] $, we 
build two linear maps $\DeltaM$ and $\DeltaB$ by setting
\begin{equ}
\DeltaB \one = \one\otimes \one\;,\qquad \DeltaB X_i = X_i \otimes \one\;,\qquad \DeltaB \Xi_{\mfl} = \Xi_{\mfl} \otimes \one\;,
\end{equ}
and then recursively
\begin{equ}[e:DeltaM]
\DeltaB \tau\bar \tau = \bigl(\DeltaB \tau\bigr)\bigl(\DeltaB \bar\tau\bigr)\;,\qquad
\DeltaM \tau = \DeltaB R\tau\;,
\end{equ}
as well as
\begin{equ}[e:IDeltaB]
\DeltaB \CI_k^\Labhom(\tau) = (\CI_k^\Labhom \otimes \id)\DeltaM \tau - \sum_{| \ell|_{\s} \ge |\CI^\Labhom_k\tau|_{\s}} {X^{\ell} \over \ell!} \otimes  \CM_+ \bigl(\tilde{\CJ}^\Labhom_{k+\ell} \otimes \id\bigr)\DeltaM \tau\;.
\end{equ}
We claim that if $ \DeltaB $ and $\DeltaM$ are defined in this way, then provided that one defines
$\hat M$ by \eqref{e:hatM}, the identity \eqref{e:hatM2} holds.

\begin{proposition} \label{propDeltaM}
If $M \in \mathfrak{R}_{ad}[\mathscr{T}]$, $\DeltaM$ is defined as above and $ \hat M  $ is defined by \eqref{e:hatM}, then the identity 
\eqref{e:hatM2} holds and $M$ belongs to $\RR$.
\end{proposition}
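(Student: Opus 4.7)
The plan is to verify the three defining properties of $\RR$ in turn: the commutation of $M$ with $\CI_k^\Labhom$ and with left multiplication by $X^k$, the upper triangularity of $\DeltaM$, and the identity \eqref{e:hatM2}. The first property is immediate from Definition~\ref{def_adm}: by condition~(1) of admissibility $R\CI_k^\Labhom(\tau) = \CI_k^\Labhom(\tau)$, hence $M\CI_k^\Labhom(\tau) = M^\circ \CI_k^\Labhom(\tau) = \CI_k^\Labhom(M\tau)$ via \eqref{e:defM}; and by condition~(2) $R(X^k\tau) = X^k R\tau$, so multiplicativity of $M^\circ$ gives $M(X^k\tau) = X^k M\tau$.

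Upper triangularity of $\DeltaM$ is proved by induction along $<_{\mathcal{T}}$ using the recursive definition \eqref{e:DeltaM} and \eqref{e:IDeltaB}. Base cases and the product case are immediate from multiplicativity of $\DeltaB$. For $\tau = \CI_k^\Labhom(\sigma)$ (elementary, so $R\tau = \tau$ and $\DeltaM \tau = \DeltaB \tau$), formula \eqref{e:IDeltaB} yields a first piece $(\CI_k^\Labhom \otimes \id)\DeltaM \sigma$ whose first-component degrees are $\geq |\CI_k^\Labhom(\sigma)|_\s$ by the induction hypothesis together with additivity of degrees under $\CI_k^\Labhom$, plus a polynomial remainder with first components $X^\ell$ satisfying $|\ell|_\s \geq |\CI_k^\Labhom(\sigma)|_\s$. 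For non-elementary $\tau$ the decomposition $\DeltaM \tau = \DeltaB \tau + \DeltaB(R\tau - \tau)$ preserves upper triangularity thanks to admissibility condition~(4), which gives $|R\tau - \tau|_\s > |\tau|_\s$.

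The crux is the identity \eqref{e:hatM2}. The key reduction uses admissibility condition~(5): since $R$ commutes with every $\Gamma_g = (\id \otimes g)\Delta$ for $g \in \CG_+$, and characters of $\CG_+$ separate points in $\CT_+$, one derives the coaction compatibility $(R \otimes \id)\Delta \tau = \Delta R\tau$. Therefore
\[
(M \otimes \hat M)\Delta \tau = (M^\circ R \otimes \hat M)\Delta \tau = (M^\circ \otimes \hat M)\Delta R\tau,
\]
and since $\DeltaM \tau = \DeltaB R\tau$, it suffices to establish the auxiliary identity
\[
(\id \otimes \CM_+)(\Delta \otimes \id)\DeltaB \sigma = (M^\circ \otimes \hat M)\Delta \sigma
\]
for every $\sigma \in \CT$, then specialise to $\sigma = R\tau$.

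This auxiliary identity is proved by induction on $<_{\mathcal{T}}$. Base cases are direct. For products $\sigma = \sigma_1 \sigma_2$ of elementary symbols, multiplicativity of all operators involved ($\DeltaB$, $\Delta$, $M^\circ$, $\hat M$ and $\CM_+$) reduces the statement to the inductive hypothesis on $\sigma_1$ and $\sigma_2$. The main obstacle is the integrator case $\sigma = \CI_k^\Labhom(\rho)$: expand the left-hand side via \eqref{e:IDeltaB}, apply $(\Delta \otimes \id)$ to the first piece using the recursive formula for $\Delta \CI_k^\Labhom$, and invoke the inductive identity applied to $R\rho$ to replace $(\id \otimes \CM_+)(\Delta \otimes \id)\DeltaM \rho$ by $(M \otimes \hat M)\Delta \rho$; on the right use $M^\circ \CI_k^\Labhom = \CI_k^\Labhom M$, multiplicativity of $\hat M$, and the defining relation \eqref{e:hatM} to compute each factor $\hat M \CJ^\Labhom_{k+\ell}(\rho)$ via the expansion $\CJ^\Labhom_m = \sum_j \frac{X^j}{j!}\tilde\CJ^\Labhom_{m+j}$. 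A combinatorial cancellation between the polynomial sum over $|\ell|_\s \geq |\CI_k^\Labhom(\rho)|_\s$ appearing in \eqref{e:IDeltaB} and the polynomial part of $\Delta \CI_k^\Labhom(\rho)$ for the complementary range $|\ell|_\s < |\CI_k^\Labhom(\rho)|_\s$ then closes the case. This combinatorial bookkeeping is morally the same co-associativity computation carried out in Section~\ref{section 2} to prove associativity of the product $\circ$ and is the main technical obstacle, but its structure is entirely dictated by the recursive definitions.
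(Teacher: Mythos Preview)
Your proof is correct and follows the same overall strategy as the paper: reduce \eqref{e:hatM2} to a ``circle'' version using that $R$ commutes with $\Delta$ (the paper also uses invertibility of $R$ for this reduction), then prove that version by induction with the integrator case as the only nontrivial step. The upper-triangularity and commutation arguments are identical.

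Where you diverge from the paper is in \emph{which} formulation you induct on. The paper first passes to the equivalent identity \eqref{e:defDeltaM} via the explicit inverse of $D=(\id\otimes\CM_+)(\Delta\otimes\id)$, then proves its circle version \eqref{e:wantedDR}. In the inductive step the paper uses the antipode identity \eqref{e:magic}, namely $\CM_+(\CA_+\CJ^\Labhom_k\otimes\id)\Delta\tau=-\tilde\CJ^\Labhom_k(\tau)$, to collapse the polynomial tails cleanly. Your route stays with \eqref{e:hatM2} itself and instead relies on the elementary inversion $\CJ^\Labhom_m=\sum_n \frac{X^n}{n!}\tilde\CJ^\Labhom_{m+n}$ together with \eqref{e:hatM} to handle the range splitting $|\ell|_\s\gtrless |\CI^\Labhom_k\rho|_\s$: expanding $\CM_+(\CJ^\Labhom_{k+m}\otimes\id)\DeltaM\rho$ via this inversion produces an unconstrained sum over $\ell=m+n$, and subtracting the piece coming from \eqref{e:IDeltaB} leaves exactly the range $|\ell|_\s<|\CI^\Labhom_k\rho|_\s$, where \eqref{e:hatM} identifies $\CM_+(\tilde\CJ^\Labhom_{k+\ell}\otimes\id)\DeltaM\rho$ with $\hat M\tilde\CJ^\Labhom_{k+\ell}(\rho)$; resumming gives $\sum_m\frac{X^m}{m!}\otimes\hat M\CJ^\Labhom_{k+m}(\rho)$ as required. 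This is the computation you sketch as a ``combinatorial cancellation'', and it does go through. Your approach is more elementary in that it avoids the antipode and the lemma on $D^{-1}$; the paper's approach packages the same cancellation into the single identity \eqref{e:magic} and yields the explicit $D^{-1}$ as a by-product.
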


Before giving the proof of Proposition~\ref{propDeltaM}, we need to rewrite \eqref{e:hatM2}. Indeed, the identity
\eqref{e:hatM2} is equivalent to 
\begin{equ}[e:defDeltaM]
\DeltaM = (\id \otimes \CM_+) \bigl((\id \otimes \CA_+)\Delta M \otimes \hat M\bigr)\Delta \;,
\end{equ}
where $ \CA_+ : \CT_{+} \rightarrow \CT_{+} $ is the
antipode associated to $ \Deltap $ defined by: 
 \begin{equs}
 \mathcal{M}_{+} \left( \id \otimes \CA_+  \right) \Deltap = \one^* \one  =  \mathcal{M}_{+} \left(  \CA_+ \otimes \id \right) \Deltap. 
 \end{equs}
 The identity \eqref{e:defDeltaM} is the consequence of the following lemma:

\begin{lemma}
Let $ D : \CT \otimes \CT_{+} \rightarrow \CT \otimes \CT_{+} $ given by
\begin{equs}
D  = (\id \otimes \CM_+)(\Delta \otimes \id)
\end{equs}
then $ D $ is invertible and $ D^{-1} $ is given by 
\begin{equs}
D^{-1} = (\id \otimes \CM_+) (\id \otimes \CA_+ \otimes \id)(\Delta \otimes \id).
\end{equs}
\end{lemma}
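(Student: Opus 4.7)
The plan is to verify directly that the proposed map is a two-sided inverse of $D$. Writing $\Delta\tau = \sum \tau_{(0)} \otimes \tau_{(1)}$ in Sweedler-type notation, with $\tau_{(0)} \in \CT$ and $\tau_{(1)} \in \CT_+$, the two maps act on an elementary tensor by
\begin{equs}
D(\tau \otimes \sigma) &= \sum \tau_{(0)} \otimes \tau_{(1)} \sigma\;, \\
D^{-1}(\tau \otimes \sigma) &= \sum \tau_{(0)} \otimes \CA_+(\tau_{(1)}) \sigma\;,
\end{equs}
so the problem reduces to a standard Hopf-algebra computation with the coaction $\Delta$.

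The key structural inputs are that $\Delta : \CT \to \CT \otimes \CT_+$ is a right comodule map for the Hopf algebra $(\CT_+,\Deltap,\one^*,\CA_+)$, which provides the coaction identity $(\Delta \otimes \id)\Delta = (\id \otimes \Deltap)\Delta$, the counit identity $(\id \otimes \one^*)\Delta = \id$, and the antipode axioms $\CM_+(\CA_+ \otimes \id)\Deltap = \one\,\one^* = \CM_+(\id \otimes \CA_+)\Deltap$. All of these can be read off from the recursive definitions recalled in Section~\ref{section 2} (or invoked from \cite{BHZ}).

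To check $D \circ D^{-1} = \id$, I apply $D$ to $\sum \tau_{(0)} \otimes \CA_+(\tau_{(1)}) \sigma$. Unfolding produces the triple sum $\sum \tau_{(0)(0)} \otimes \tau_{(0)(1)} \CA_+(\tau_{(1)}) \sigma$. The coaction identity lets me rewrite this as $\sum \tau_{(0)} \otimes \tau_{(1)(1)} \CA_+(\tau_{(1)(2)}) \sigma$, where $\Deltap\tau_{(1)} = \sum \tau_{(1)(1)} \otimes \tau_{(1)(2)}$. The antipode axiom then collapses the factor $\sum \tau_{(1)(1)} \CA_+(\tau_{(1)(2)})$ to $\one^*(\tau_{(1)}) \one$, and the counit identity $(\id \otimes \one^*)\Delta = \id$ yields $\tau \otimes \sigma$. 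The verification of $D^{-1} \circ D = \id$ is entirely parallel, using the other antipode axiom $\CM_+(\CA_+ \otimes \id)\Deltap = \one\,\one^*$ at the corresponding step.

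The main (and essentially only) obstacle is bookkeeping: the composition introduces two layers of Sweedler indices, and one must apply the coaction identity first in order to bring $\CA_+$ into contact with both tensor factors of a single $\Deltap\tau_{(1)}$; after this reordering, the antipode and counit axioms finish the argument in two lines each.
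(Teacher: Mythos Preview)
Your proof is correct and follows essentially the same route as the paper's: both verify the inverse property directly using the comodule identity $(\Delta \otimes \id)\Delta = (\id \otimes \Deltap)\Delta$, the antipode axioms, and the counit identity $(\id \otimes \one^*)\Delta = \id$. The only cosmetic difference is that the paper carries out the computation in operator form (and starts with $D^{-1}D$), while you use Sweedler notation (and start with $DD^{-1}$); the underlying manipulations are identical.
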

\begin{proof}
We have by using the fact that  $ (\Delta \otimes \id) \Delta = (\id \otimes \Deltap) \Delta $   and $ \CM_+ (\id \otimes \CM_+) = 
 \CM_+ (\CM_+ \otimes \id) $
\begin{equs}
D^{-1} D & = (\id\otimes \CM_+) (\id\otimes \CA_+ \otimes \id) (\id \otimes \id \otimes \CM_+)( (\Delta \otimes \id)\Delta \otimes \id)  \\
& =  (\id \otimes \CM_+ \left( \CA_+ \otimes \CM_+ \right) )  ( (\id \otimes \Deltap)\Delta \otimes \id) \\
& = (\id \otimes \CM_+)  (\id \otimes \CM_+ \left( \CA_+ \otimes \id \right) \otimes \id )  ( (\id \otimes \Deltap)\Delta \otimes \id) \\
& = (\id \otimes \CM_+)   ( (\id \otimes \CM_+ \left( \CA_+ \otimes \id \right)\Deltap)\Delta \otimes \id). 
\end{equs}
Now it follows with the identities $ \CM_+ \left( \CA_+ \otimes \id \right)\Deltap = \one^* \one $ and $ (\id \otimes \one^{*})\Delta \tau = (\tau \otimes \one) $
\begin{equs}
D^{-1} D 
& = (\id \otimes \CM_+)   ( (\id \otimes \one^{*})\Delta \otimes \id) = \id \otimes \id.
\end{equs}
Using the same properties, we prove that $ D D^{-1} = \id \otimes \id $.
\end{proof}

\begin{remark}
The previous lemma gives an explicit expression of the inverse of $ D $. It is a refinement of \cite[Proposition 8.38]{reg} 
which proves the fact that $ D $ is invertible.
\end{remark}

\begin{remark}\label{rem:strong}
The equivalence between \eqref{e:defDeltaM} and \eqref{e:hatM2} is in the strong sense
that \eqref{e:defDeltaM} holds for any given symbol $\tau$ if and only if \eqref{e:hatM2} holds
for the same symbol $\tau$.
\end{remark}
Before giving the proof of Proposition~\ref{propDeltaM}, we provide some identities concerning the antipode $ \CA_+ $.
Regarding the antipode $\CA_+$, one has the recursive definition
\begin{equs}
& \CA_+ \one = \one, \quad \CA_+ X_i = -X_i, \quad
\CA_+(\tau_1 \tau_2) = \CA_+(\tau_1) \CA_+(\tau_2), \\ & \CM_+ \left( \id \otimes \CA_+   \right) \Deltap \mathcal{J}^{\Labhom}_{k} \tau = 0.
\end{equs}
which gives
\begin{equ}[e:defA]
\sum_{\ell \in \N^{d+1}} \frac{X^{\ell}}{\ell!}\CA_+ \CJ^{\Labhom}_{k+ \ell }(\tau) = - \CM_+ \Bigl(\CJ^{\Labhom}_{k } \otimes \CA_+\Bigr)\Delta\tau\;.
\end{equ}
 As a consequence of this, one has the identity
\begin{equ}[e:magic]
 \CM_+ \bigl(\CA_+ \CJ^{\Labhom}_{k} \otimes \id \bigr)\Delta \tau = -\tilde{\CJ}^{\Labhom}_{k}(\tau)
\;.
\end{equ}
To see this, simply apply on both sides in \eqref{e:defA} the antipode $ \CA_+ $.

\begin{proof}[Proposition~\ref{propDeltaM}]
Since \eqref{e:hatM} holds by definition and it is straightforward to verify that 
$\DeltaM$ is upper triangular (just proceed by induction using \eqref{e:IDeltaB}
and \eqref{e:DeltaM}), 
we only need to verify that \eqref{e:hatM2}, or equivalently \eqref{e:defDeltaM}, holds.
For this, we first note that since $\DeltaM = \DeltaB R$, $M = M^\circ R$,
$R$ commutes with $\Delta$, and since $R$ is invertible by assumption, 
\eqref{e:defDeltaM} is equivalent to the identity
\begin{equ}[e:wantedDR]
\DeltaB = (\id \otimes \CM_+) \bigl(( \id \otimes \CA_+)\Delta M^\circ \otimes \hat M\bigr)\Delta\;,
\end{equ}
and it is this identity that we proceed to prove now.
Both sides in \eqref{e:wantedDR} are morphisms so that, 
by induction, it is sufficient to show that 
if \eqref{e:wantedDR} holds for some element $\tau$, then it also holds for $\CI^{\Labhom}_k(\tau)$. 
(The fact that it holds for $\one$, $X_i$ and $\Xi_{\mfl}$ is easy to verify.)

 Starting from \eqref{e:IDeltaB}, we first use \eqref{e:hatM} and the fact that 
$\DeltaM$ and $\DeltaB$ agree on elements of the form $\CI_k^{\Labhom}(\tau)$
to rewrite $\DeltaB \CI_k^{\Labhom}(\tau)$ as
\begin{equ}[e:start]
\DeltaB \CI^{\Labhom}_k(\tau) = (\CI_k \otimes \id)\DeltaM \tau + \sum_{\ell \in \N^{d+1}} {X^{\ell} \over \ell!} \otimes \bigl(\hat M \tilde{\CJ}^{\Labhom}_{k+ \ell}(\tau) - \CM_+ (\tilde{\CJ}^{\Labhom}_{k+ \ell}\otimes \id)\DeltaM\tau\bigr)\;,
\end{equ}
where the sum runs over all multiindices $\ell$ (but only finitely many terms in the sum
are non-zero). By  \eqref{e:magic}, we have 
\begin{equs}
\CM_+ (\tilde{\CJ}^{\Labhom}_{k+ \ell}\otimes \id)\DeltaM\tau 
&=-   \CM_+ \Bigl(\CM_+(\CA_+ \CJ^{\Labhom}_{k+\ell} \otimes \id)\Delta\otimes \id\Bigr)\DeltaM\tau\\
&=-   \CM_+ \bigl(\CA_+ \CJ^{\Labhom}_{k+\ell} \otimes \id\bigr) \bigl(\id \otimes \CM_+\bigr)\bigl(\Delta\otimes \id\bigr)\DeltaM\tau\;,
\end{equs}
Recall that by Remark~\ref{rem:strong}, the induction hypothesis implies that
\eqref{e:hatM2} holds, so that we finally conclude that 
\begin{equ}
\CM_+ (\tilde{\CJ}^{\Labhom}_{k+ \ell}\otimes \id)\DeltaM\tau  = -  \CM_+ \bigl(\CA_+ \CJ_{k+ \ell}^{\Labhom}M \otimes \hat M\bigr)\Delta\tau\;.
\end{equ}

Using again the induction hypothesis, but this time in its form \eqref{e:defDeltaM},
we thus obtain from \eqref{e:start}  the identity
\begin{equs}
\DeltaB \CI^{\Labhom}_k(\tau) &= (\id \otimes \CM_+) \bigl((\CI^{\Labhom}_k\otimes \CA_+)\Delta M \otimes \hat M\bigr)\Delta\tau + \sum_{\ell \in \N^{d+1}} {X^{\ell} \over \ell!} \otimes  \hat M \tilde{\CJ}^{\Labhom}_{k + \ell}(\tau) \\
&\quad + \sum_{\ell \in \N^{d+1}} {X^{\ell} \over \ell!} \otimes   \CM_+ \bigl(\CA_+ \CJ^{\Labhom}_{k+\ell}M \otimes \hat M\bigr)\Delta\tau \;.
\end{equs}
At this stage, we see that we can use the definition of $\Delta$ to combine the first and the last term, yielding
\begin{equs}
 \DeltaB &  \CI^{\Labhom}_k(\tau) = (\id \otimes \CM_+) \bigl((\id\otimes \CA_+)\Delta \CI^{\Labhom}_k M \otimes \hat M\bigr)\Delta\tau  +  \sum_{\ell \in \N^{d+1}} {X^{\ell} \over \ell!} \otimes  \hat M \tilde{\CJ}^{\Labhom}_{k+ \ell}(\tau)\\
&= (\id \otimes \CM_+) \bigl((\id\otimes \CA_+)\Delta  M^\circ  \otimes \hat M\bigr)(\CI^{\Labhom}_k\otimes \id)\Delta\tau + \sum_{\ell \in \N^{d+1}} {X^{\ell} \over \ell!} \otimes  \hat M \tilde{\CJ}^{\Labhom}_{k+\ell}(\tau)\;.
\end{equs}
We now rewrite the last term
\begin{equs}
\sum_{\ell \in \N^{d+1}} {X^{\ell} \over \ell!} &\otimes  \hat M \tilde{\CJ}^{\Labhom}_{k+ \ell}(\tau)
= \sum_{m,\ell \in \N^{d+1}} (\id \otimes\CM_+)\Bigl({X^{\ell} \over \ell!} \otimes {(-X)^{m} \over m!} \otimes  \hat M \CJ^{\Labhom}_{k+ \ell + m}(\tau)\Bigr)\\
&\quad= \sum_{\ell \in \N^{d+1}} (\id \otimes\CM_+)\Bigl((\id \otimes \CA_+)\Delta M^\circ{X^\ell \over \ell!} \otimes \hat M \CJ^{\Labhom}_{k+\ell}(\tau)\Bigr)\\
&\quad= \sum_{\ell \in \N^{d+1}} (\id\otimes\CM_+)\bigl(( \id\otimes \CA_+)\Delta M^\circ \otimes  \hat M \bigr)\Bigl({X^\ell \over \ell!} \otimes \CJ^{\Labhom}_{k+\ell}(\tau)\Bigr)\;.
\end{equs}
Inserting this into the above expression and using the definition of $\Delta$ 
finally yields \eqref{e:wantedDR} as required, thus concluding the proof. 
\end{proof}

\section{Link with the renormalisation group}
\label{section 4}

In this section, we establish a link between the renormalisation group $ \mathfrak{R}[\mathscr{T}] $
defined in \cite{BHZ} and 
the maps $ M $ constructed from admissible maps $ R $.

\begin{theorem} \label{inclusion theorem}
One has  $ \mathfrak{R}[\mathscr{T}] \subset \mathfrak{R}_{ad}[\mathscr{T}]$.
\end{theorem}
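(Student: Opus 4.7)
The plan is to take $M \in \mathfrak{R}[\mathscr{T}]$ and produce an admissible $R \in \mathcal{L}_{ad}(\CT)$ such that $M = M_R$, the renormalisation map built recursively from $R$ via \eqref{e:defM}. Throughout, I use the coproduct $\DeltaM : \CT \to \CT \otimes \CT_+$ and the algebra morphism $\hat M$ attached to $M$ by the defining properties of $\mathfrak{R}[\mathscr{T}]$. First I define a candidate $M^{\circ}$ directly from $M$ by mimicking the recursion \eqref{e:defM}: on elementary symbols set $M^{\circ} \tau = \tau$ for $\tau \in \{\one, X_i, \Xi_{\mfl}\}$ and $M^{\circ} \CI_k^{\Labhom}(\sigma) = \CI_k^{\Labhom}(M\sigma)$, then extend multiplicatively. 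Since $M$ commutes with $\CI_k^{\Labhom}$, this prescription is forced. The upper triangularity of $\DeltaM$ gives $M^{\circ} = \id + N$ with $N$ strictly increasing the grading, so $M^{\circ}$ is invertible on $\CT$, and I set $R := (M^{\circ})^{-1} M$. By construction $M = M^{\circ} R$, matching the first line of \eqref{e:defM}.

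I then verify the five conditions of Definition~\ref{def_adm}. Property~(1) is clear since $M$ and $M^{\circ}$ agree on elementary symbols. Property~(2), commutation with multiplication by $X^k$, follows from the same property for $M$ (the first defining condition of $\mathfrak{R}[\mathscr{T}]$) and for $M^{\circ}$ (by multiplicativity). For properties~(3) and (4), I observe that on a non-elementary product $\tau = \prod_i \tau_i$ of elementary factors one has $M^{\circ} \tau = \prod_i M\tau_i$, and that $\|M\tau - M^{\circ}\tau\| < \|\tau\|$ together with $|M\tau - M^{\circ}\tau|_{\s} > |\tau|_{\s}$ follows from the upper triangularity of $\DeltaM$; these bounds propagate to $R\tau - \tau = (M^{\circ})^{-1}(M\tau - M^{\circ}\tau)$ via the nilpotency of $N$.

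The main difficulty is property~(5), the commutation $R\Gamma = \Gamma R$ for every $\Gamma \in G$. I exploit the coproduct identity \eqref{e:hatM2}, which by the inverse-of-$D$ lemma preceding Proposition~\ref{propDeltaM} rewrites as
\begin{equs}
\DeltaM = (\id \otimes \CM_+)\bigl((\id \otimes \CA_+)\Delta M \otimes \hat M\bigr)\Delta.
\end{equs}
Pairing both sides in the right factor with an arbitrary character $g \in \CG_+$ and using $\Gamma_g = (\id \otimes g)\Delta$, I read off that $M$ intertwines the structure group by a rule $\Gamma_g M = M \Gamma_{g'}$, with $g'$ depending only on $g$ and $\hat M$. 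The analogous identity $\Gamma_g M^{\circ} = M^{\circ} \Gamma_{g'}$, with the \emph{same} $g'$, follows by replaying this argument elementary-symbol by elementary-symbol and using multiplicativity of $M^{\circ}$. Cancelling gives $R \Gamma_g = \Gamma_g R$.

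Finally, $M_R = M$ follows by induction on the order $<_{\mathcal{T}}$: the base case is trivial; on a product of elementary symbols the recursion yields $M_R \tau = M^{\circ} R\tau = M\tau$ by construction of $R$; the case $\CI_k^{\Labhom}(\sigma)$ closes by the commutation $M \CI_k^{\Labhom} = \CI_k^{\Labhom} M$ together with the induction hypothesis on $\sigma$. The main obstacle is property~(5): one must extract from the single identity \eqref{e:hatM2} the fact that the local part $R = (M^{\circ})^{-1} M$ commutes with $G$, even though $M$ itself does not in general, by carefully separating the contributions of propagation (carried by $M^{\circ}$ and $\Delta$) from those of local renormalisation.
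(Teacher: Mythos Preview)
Your proposal misidentifies the object $\mathfrak{R}[\mathscr{T}]$. In this paper $\mathfrak{R}[\mathscr{T}]$ is \emph{not} the Hairer group $\mathfrak{R}$ characterised by \eqref{e:hatM}--\eqref{e:hatM2} and the upper triangularity of $\DeltaM$; it is the BHZ group \eqref{Mdelta}, whose elements are of the explicit form $M_\ell = (\ell \otimes \id)\Deltam$ for characters $\ell \in \CG_-$. You never use $\ell$, $\Deltam$, or the assumptions on $\CG_-$ (that $\ell$ vanishes on planted trees and on trees with non-zero root decoration). Instead you invoke the upper triangularity of $\DeltaM$ and the identity \eqref{e:defDeltaM}, neither of which has been established for elements of $\mathfrak{R}[\mathscr{T}]$ at this point of the paper (Proposition~\ref{propDeltaM} only gives $\mathfrak{R}_{ad}[\mathscr{T}] \subset \mathfrak{R}$, which is the reverse direction). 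In effect you are trying to prove $\mathfrak{R} \subset \mathfrak{R}_{ad}[\mathscr{T}]$, a different and unestablished statement.

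Even granting those tools, your argument for property~(5) has a genuine gap. From \eqref{e:defDeltaM} one can extract a twisted intertwining $\Gamma_g M = M\,\Gamma_{g'}$, but your assertion that $M^\circ$ satisfies the \emph{same} relation with the \emph{same} $g'$, ``by replaying this argument elementary-symbol by elementary-symbol'', is not a proof: multiplicativity of $M^\circ$ and commutation of $M$ with $\CI_k^\Labhom$ do not by themselves force the twisting characters to coincide. This is exactly the delicate point, since $M$ itself does not commute with $G$ in general (cf.\ Section~\ref{generalised_KPZ_ex}).

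The paper's proof proceeds quite differently and uses the specific structure of $M_\ell$. It factorises the coproduct as $\Deltam = (\CM_- \otimes \id)(\id \otimes \Deltam_\circ)\Deltam_r$ (Proposition~\ref{factorisation_admt}), where $\Deltam_r$ extracts only a subtree containing the root and $\Deltam_\circ$ extracts only away from the root. This yields $M_\ell = M^\circ_\ell R_\ell$ with $R_\ell = (\ell \otimes \id)\Deltam_r$ (Corollary~\ref{recursive_f}). Properties~(1) and~(2) follow from the imposed vanishing of $\ell$ on planted trees and on trees with non-zero root decoration; properties~(3) and~(4) follow from the combinatorics of $\Deltam_r$ (the extracted subtree has negative degree and at least one noise); and property~(5) is obtained from the cointeraction identity $(\id \otimes \Deltap)\Deltam_r = (\Deltam_r \otimes \id)\Deltap$ (Proposition~\ref{commutation_R}), proved via \eqref{e:deltar} and the coassociativity of $\Delta_2$. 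That identity is the structural reason $R_\ell$ commutes with $G$, and it has no counterpart in your argument.
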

The outline of this section is the following. We first start by recalling the definition of $ \mathfrak{R}[\mathscr{T}] $ and then we show that these maps are of the form $ M^{\circ} R $. Then we prove the commutative property with the structure group and we  give a proof of Theorem~\ref{inclusion theorem}. Finally, we derive a recursive formula.

\subsection{The renormalisation group}

 Before giving the definition of $ \mathfrak{R}[\mathscr{T}] $, we need to introduce some notations. Let $ \hat\CT_- $ the free commutative algebra generated by $ B_{\circ}$. We denote by $ \cdot $ the forest product associated to this algebra. Elements of $ \hat\CT_- $ are of the form  $ (F,\Labn,\Labe) $ where $ F $ is now a forest.
 Then  for the forest product we have:
 \begin{equs}
 (F,\Labn,\Labe) \cdot (G,\bar \Labn,\bar \Labe)  = (F \cdot G,\bar \Labn + \Labn,\bar \Labe + \Labe)
 \end{equs} 
where the sums $ \bar \Labn + \Labn $ and  $ \bar \Labe + \Labe $ mean that decorations defined on one of the forests are extended to the disjoint union by setting them to vanish on the other forest.
 Then we set $ \CT_{-} = \hat \CT_{-} / \CJ_{+} $ where $  \CJ_{+}  $ is the ideal of $ \hat \CT_- $ generated by 
$ \lbrace \tau \in B_{\circ} \; : \; | \tau |_{\s}\geq 0 \rbrace $. Then the map $ \Deltam :  \CT \rightarrow \CT_- \otimes \CT $ defined in \cite{BHZ} is given for $ T^{\Labn}_{\Labe} \in \CT $ by: 
\begin{equs}\label{co-action minus}
 \Deltam  T^{\Labn}_{\Labe} & = 
 \sum_{A \in \Adm(T) } \sum_{\Labe_A,\Labn_A}  \frac1{\Labe_A!}
\binom{\Labn}{\Labn_A}
 (A,\Labn_A+\pi\Labe_A, \Labe \restr E_A) 
 \\& \qquad  \otimes( \CR_A T, [\Labn - \Labn_A]_A, \Labe + \Labe_A)\;, 
\end{equs}
where
\begin{itemize}
\item For $ C \subset D $ and $ f : D \rightarrow \N^{d+1} $, let $ f \restr C$ the restriction of $ f $ to $ C $.
\item The first sum runs over $ \Adm(T) $,  all subgraphs $ A  $ of
$ T $, $ A $ may be empty. The second sum runs over all  $ \Labn_A : N_A \rightarrow \N^{d+1}  $ and $ \Labe_{A} : \partial(A,T) \rightarrow \N^{d+1} $ where 
$ \partial(A,F) $ denotes the edges in $ E_T \setminus E_A $ that are adjacent to $ N_A $.
\item  We write $ \CR_A T $ for the tree obtained by contracting the connected components of $ A $. Then we have an action on the decorations in the sense that for $ f : N_T \rightarrow \N^{d+1}$ such that $ A \subset T $ one has: $ [f]_A(x) = \sum_{x \sim_{A} y} f(y) $ where $ x $ is an equivalence class of $ \sim_A $ and
$ x \sim_A y $ means that $ x $ and $ y $ are connected in $ A $. For $ g : E_T  \rightarrow \N^{d+1}  $, we define for every $ x \in N_T $, $ (\pi g)(x) = \sum_{e=(x,y) \in E_T} g(x)$.
\end{itemize}

Then one can turn this map into a coproduct 
$ \Deltam : \CT_-  \rightarrow \CT_{-} \otimes \CT_{-}$ and obtain a Hopf algebra for $  \CT_{-}  $ endowed with this coproduct and the forest product see \cite[Prop. 5.35]{BHZ}.  The main difference here is that we do not consider extended decorations but the results for the Hopf algebra are the same as in \cite{BHZ}. This allows us to consider the group of character of this Hopf algebra denoted by $ \mathcal{G}_- $ which is the set of multiplicative 
elements of $ \CT_-^* $ the dual of $ \CT_- $ endowed with the convolution product $ \circ $ described below: 
\begin{equs}
\ell \circ \bar \ell = \left( \ell \otimes \bar \ell \right) \Deltam, \quad \ell^{-1} = \ell(\CA_- \cdot), \quad \ell, \bar \ell \in \mathcal{G}_-,
\end{equs}
where $ \CA_- $ is the antipode for $ \Deltam $. In comparison to \cite{BHZ}, we add the assumptions on $ \CG_- $ that every $ \ell \in \CG_- $ is zero on $ \hat B_{\circ} \cap B_{\circ}^{-} $ and on every $T^{\Labn}_{\Labe} \in B_{\circ}^{-}  $ such that $ \Labn(\rho_T) \neq 0 $ where $  \hat B_{\circ} $ are the planted trees of $ B_{\circ} $ and $ B^{-}_{\circ} $ are the elements of $ B_{\circ} $ with negative degree. Actually it is easy to check that these assumptions define a subgroup of $ \CG_- $.
Then the renormalisation group $ \mathfrak{R}[\mathscr{T}] $ is given by:
\begin{equs} \label{Mdelta}
\mathfrak{R}[\mathscr{T}] = \lbrace M_{\ell} = \left( \ell \otimes \id \right) \Deltam, \; \ell \in \CG_- \rbrace.
\end{equs}
In order to rewrite these maps $ M_{\ell} $ in terms of some $ M^{\circ} $ and $ R $, we need to derive a factorisation of the map $ \Deltam $. We denote the forest product by $ \cdot $ and the empty forest by $ \one_1 $.

\begin{definition}
For all trees $T$ we denote by $\Adm^\circ(T)$ the family of all (possibly empty) 
$A\in \Adm(T)$ such that if $A=\{S_1,\ldots,S_k\}$ then $\rho_{S_i}\ne\rho_T$ for all $i=1,\ldots,k$. For all forests $F$ with $F=T_1\cdot T_2\cdots T_n$ we set 
\begin{equs}
\Adm^\circ(F):=\{ A\in\Adm(F): \, A= A_1 \sqcup \cdots  \sqcup A_m, \,  A_i\in
\Adm^\circ(T_i), \, i=1,\ldots,m\},
\end{equs}
and $\Adm^\circ(\one):=\emptyset$. We also define $ \Adm^r(T) $ containing the empty forest and the non-empty forests  $A $ such that $ A = \lbrace S \rbrace $ and $ \rho_S = \rho_T $.
\end{definition}

We define the maps $ \Deltam_{\circ} $ (resp. $ \Deltam_r $) as the same as $ \Deltam  $ by replacing $ \Adm(T) $ in \eqref{co-action minus}
by $ \Adm^{\circ}(T) $(resp. $ \Adm^r(T) $).

\begin{proposition} \label{mult_Mcirc}
The map $\Deltam_{\circ}$ is multiplicative on $\CT$, i.e. $\Deltam_{\circ}(\tau_1\tau_2)=(\Deltam_{\circ}\tau_1)(\Deltam_{\circ}\tau_2)$ for all $\tau_1,\tau_2\in\CT$, where we consider on $\CT_-\otimes\CT$ the product 
\[
(\phi_1,\tau_1)\otimes(\phi_2,\tau_2)\mapsto(\phi_1\cdot\phi_2,\tau_1\tau_2).
\]
\end{proposition}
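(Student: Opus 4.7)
The plan is to build a bijection between $\Adm^{\circ}(\tau_1\tau_2)$ and $\Adm^{\circ}(\tau_1)\times\Adm^{\circ}(\tau_2)$, and then show that under this bijection each term in the sum defining $\Deltam_\circ(\tau_1\tau_2)$ corresponds exactly to the product of the matching terms in $\Deltam_\circ\tau_1$ and $\Deltam_\circ\tau_2$.

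First I would set up the combinatorial bijection. Recall that $\tau_1\tau_2$ is the tree obtained by gluing the roots of $\tau_1$ and $\tau_2$ into a single root $\rho$. For any $A\in\Adm^{\circ}(\tau_1\tau_2)$, by definition no component of $A$ contains $\rho$; since every edge of $\tau_1\tau_2$ lies in exactly one of the two ``sides'' hanging off $\rho$, each connected component of $A$ lies entirely on one side, so $A$ decomposes uniquely as $A=A_1\sqcup A_2$ with $A_i\in\Adm^{\circ}(\tau_i)$. Conversely any such pair yields an element of $\Adm^{\circ}(\tau_1\tau_2)$. This is exactly the point of restricting to $\Adm^{\circ}$: if we allowed components touching the root we would have to split a single component across $\tau_1$ and $\tau_2$, which is incompatible with multiplicativity.

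Next I would check that every decoration in \eqref{co-action minus} factorises along $A=A_1\sqcup A_2$. Because $\rho\notin N_A$, one has $N_A=N_{A_1}\sqcup N_{A_2}$ with all nodes lying strictly inside one of the $\tau_i$; hence any $\Labn_A\colon N_A\to\N^{d+1}$ corresponds to a pair $(\Labn_{A_1},\Labn_{A_2})$, and
\[
\binom{\Labn}{\Labn_A}=\binom{\Labn\restr N_{\tau_1}}{\Labn_{A_1}}\binom{\Labn\restr N_{\tau_2}}{\Labn_{A_2}},
\]
because the root decoration of $\tau_1\tau_2$ (which is the one shared quantity) contributes no binomial factor since $\rho\notin N_A$. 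Similarly $\partial(A,\tau_1\tau_2)=\partial(A_1,\tau_1)\sqcup\partial(A_2,\tau_2)$, so $\Labe_A!$ and the assignments $\Labe_A$ and $\Labe\restr E_A$ factorise. The left tensor factor then becomes $(A_1,\Labn_{A_1}+\pi\Labe_{A_1},\Labe\restr E_{A_1})\cdot(A_2,\Labn_{A_2}+\pi\Labe_{A_2},\Labe\restr E_{A_2})$ in $\CT_-$ via the forest product.

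For the right tensor factor I would use that the contraction $\CR_A(\tau_1\tau_2)$ is obtained by contracting $A_1$ inside $\tau_1$ and $A_2$ inside $\tau_2$ without touching $\rho$, so it equals the tree product $\CR_{A_1}(\tau_1)\cdot \CR_{A_2}(\tau_2)$. The resulting root decoration $[\Labn-\Labn_A]_A(\rho)$ is, by definition of $[\,\cdot\,]_A$, the sum of the $\Labn-\Labn_A$ values taken on the equivalence class of $\rho$, which splits into the equivalence classes of $\rho_{\tau_1}$ in $A_1$ and of $\rho_{\tau_2}$ in $A_2$; this is exactly the decoration prescribed by the tree product. Edge decorations of the right factor split likewise, being edge-local. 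Putting these paragraphs together and summing over all pairs $(A_1,A_2)$ and all pairs of decorations $(\Labn_{A_1},\Labe_{A_1},\Labn_{A_2},\Labe_{A_2})$ gives $\Deltam_\circ(\tau_1\tau_2)=(\Deltam_\circ\tau_1)(\Deltam_\circ\tau_2)$.

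The only potential obstacle is the bookkeeping around the glued root: one must verify that no part of the root node or root edge decoration gets ``shared'' in a way that prevents factorisation. This is precisely what the $\Adm^{\circ}$ restriction is designed to avoid, which is why the proof reduces to a clean product of independent contributions from $\tau_1$ and $\tau_2$.
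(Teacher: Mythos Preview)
Your argument is correct: the bijection $\Adm^\circ(\tau_1\tau_2)\leftrightarrow\Adm^\circ(\tau_1)\times\Adm^\circ(\tau_2)$ together with the factorisation of all numerical data is exactly what is needed, and your observation that $\rho\notin N_A$ is the reason every piece splits cleanly is the heart of the matter. One small wording issue: the equivalence class of $\rho$ under $\sim_A$ is simply $\{\rho\}$ (not a union of two classes), so $[\Labn-\Labn_A]_A(\rho)=\Labn(\rho)$; the match with the tree product then comes from $\Labn(\rho)=\Labn_1(\rho_{\tau_1})+\Labn_2(\rho_{\tau_2})$ and the analogous trivial identities on each side. The conclusion is unaffected.

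The paper takes a slightly different route. Rather than working with $\Adm^\circ$ on both sides, it first reduces to products $\tau_1\cdots\tau_m$ of \emph{planted} trees, introduces the map $\hat\Pi$ that removes the unique root edge of a planted tree, and proves the identity
\[
\Deltam_\circ\,\tau=(\id\otimes X^{\Labn(\rho)}\CI^{\Labhom(e)}_{\Labe(e)})\,\Deltam\,\hat\Pi\tau
\]
for planted $\tau$, together with $\Adm^\circ(\tau)=\Adm(\hat\Pi\tau)$. Multiplicativity is then read off from a bijection with $\prod_i\Adm(\hat\Pi\tau_i)$. Your direct argument is shorter and entirely self-contained for this proposition. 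The paper's detour through $\hat\Pi$ has a payoff elsewhere: the displayed identity above is precisely the recursive relation \eqref{recursive_identity_Mcirc} that is invoked later in the proof of Theorem~\ref{inclusion theorem} to show that $M^\circ_\ell$ satisfies the recursion \eqref{e:defM}. If you adopt your proof here, you would need to establish that identity separately when it is used.
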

\begin{proof}
We introduce $\hat\Pi:\hat B_{\circ}\mapsto  B_{\circ}$ the map which associates to a planted decorated tree $T^\Labn_\Labe$ a decorated tree obtained by erasing the only edge $e=(\rho,y)$ incident to the root $\rho$ in $T$ and setting the root to be $y$.

Since all decorated trees are products of elementary trees, it is enough to prove that for $\tau_1,\ldots,\tau_m\in\hat B_{\circ}$ such that
$ \tau_1 ... \tau_m \in B_{\circ} $ we have $\Delta_{\circ}(\tau_1\cdots\tau_m)=(\Delta_{\circ}\tau_1)\cdots(\Delta_{\circ}\tau_m)$. It is easy to see that for all $T^\Labn_\Labe\in\hat B_{\circ}$
\begin{equs} \label{recursive_identity_Mcirc}
\Adm^{\circ}(T^\Labn_\Labe)=\Adm(\hat\Pi T^\Labn_\Labe), \qquad \Deltam_{\circ} T^\Labn_\Labe=(\id \otimes X^{\Labn(\rho)}\I^{\Labhom(e)}_{\Labe(e)})\Deltam\hat\Pi T^\Labn_\Labe
\end{equs}
where as above $\rho$ is the root of $T^\Labn_\Labe$ and $e=(\rho,y)$ is the only edge incident to $\rho$. 

Now if $T^\Labn_\Labe=\tau_1\cdots\tau_m\in B_{\circ}$ with $\tau_1,\ldots,\tau_m\in\hat B_{\circ}$, then we have a canonical bijection between $\Adm^{\circ}(T^\Labn_\Labe)$ and $\Adm(\hat\Pi \tau_1)\times\cdots\times\Adm(\hat\Pi\tau_m)$ and the numerical coefficients factorise nicely, so that
\begin{equs}
 \Deltam_\circ( \tau_1\cdots\tau_m) 
& = \prod_{i=1}^m\sum_{A_i\in \Adm(\hat\Pi \tau_i)}
\sum_{\Labe_{A_i},\Labn_{A_i}} {1\over \Labe_{A_i}!}\binom{\Labn}{\Labn_{A_i}} \, (A_i, \Labn_{A_i} + \pi \Labe_{A_i}, \Labe \restr E_{A_i} ) \\ &   \otimes X^{\Labn(\rho_i)}\I^{\Labhom(e_i)}_{\Labe(e_i)} ( \CR_{A_i} \hat\Pi \tau_i, [ \Labn - \Labn_{A_i} ]_{A_i}, \Labe+\Labe_{A_i} )
\\ & = \prod_{i=1}^m (\id \otimes X^{\Labn(\rho_i)}\I^{\Labhom(e_i)}_{\Labe(e_i)})\Deltam \hat\Pi \tau_i=
\prod_{i=1}^m \Deltam_{\circ} \tau_i.
\end{equs}
\end{proof}

\begin{proposition} \label{factorisation_admt}
Let $ {\mathcal M}_-:\CT_{-}\otimes \CT_- \mapsto \CT_-$, $\phi\otimes\bar\phi\mapsto\phi\cdot\bar\phi$. Then
\begin{equs}
({\mathcal M}_{-} \otimes \id)(\id \otimes \Deltam_{\circ})\Deltam_r=\Deltam
\end{equs}
\end{proposition}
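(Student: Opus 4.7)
The plan is to verify the identity on a single basis vector $T^\Labn_\Labe \in B_\circ$ and extend by linearity. The combinatorial heart of the argument is that every admissible subforest $A \in \Adm(T)$ admits a unique decomposition $A = S \sqcup A'$, where $S \in \Adm^r(T)$ is the (possibly empty) component of $A$ whose root is $\rho_T$ and $A'$ is the union of the remaining components. Since the components of $A'$ avoid $\rho_T$ and are disjoint from $S$ as subgraphs, they descend unchanged to $\CR_S T$ and give an element of $\Adm^\circ(\CR_S T)$; conversely, every pair $(S, A') \in \Adm^r(T) \times \Adm^\circ(\CR_S T)$ arises from a unique such $A$. Since contractions of disjoint subgraphs commute, one moreover has $\CR_A T = \CR_{A'} \CR_S T$.

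With this bijection in hand, I would check that the decorations and combinatorial prefactors split compatibly. On the disjoint union $N_A = N_S \sqcup N_{A'}$ any $\Labn_A$ factors uniquely as $(\Labn_S, \Labn_{A'})$, and
\begin{equs}
\binom{\Labn}{\Labn_A} = \binom{\Labn \restr N_S}{\Labn_S}\binom{\Labn \restr N_{A'}}{\Labn_{A'}},
\end{equs}
where the second factor equals $\binom{[\Labn - \Labn_S]_S \restr N_{A'}}{\Labn_{A'}}$ because $[\Labn - \Labn_S]_S$ agrees with $\Labn$ on nodes outside $N_S$. A parallel argument, using the fact that $\sim_A$-classes split cleanly into $\sim_S$-classes on $N_S$ and $\sim_{A'}$-classes on $N_{A'}$, yields the residual-decoration identities $[\Labn - \Labn_A]_A = [[\Labn - \Labn_S]_S - \Labn_{A'}]_{A'}$ and $\Labe + \Labe_A = (\Labe + \Labe_S) + \Labe_{A'}$ on $\CR_A T$.

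The main obstacle is the bookkeeping for the boundary edges $\partial(A, T)$ when $A'$ has a component whose root is a child in $T$ of some node of $S$: such an edge then sits simultaneously in $\partial(S, T)$ and $\partial(A', T)$. Using the standard convention of \cite{BHZ} that $\partial(A, T)$ consists of half-edges $(e, v)$ with $v \in N_A$ and $e \in E_T \setminus E_A$ incident to $v$, I would verify the literal disjoint union $\partial(A, T) = \partial(S, T) \sqcup \partial(A', T)$ (no edge of $E_{A'}$ is adjacent to $N_S$ and vice versa, because $S$ and $A'$ have disjoint node sets), which under $\CR_S$ further identifies the second piece with $\partial(A', \CR_S T)$. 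This gives $\Labe_A! = \Labe_S!\, \Labe_{A'}!$, the factorisation of $\Labe_A$ as $(\Labe_S, \Labe_{A'})$, and $(\pi \Labe_A)\restr N_S = \pi \Labe_S$ together with $(\pi \Labe_A)\restr N_{A'} = \pi \Labe_{A'}$.

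Combining the three steps, each term in the sum defining $\Deltam T^\Labn_\Labe$ matches exactly one term in the sum defining $({\mathcal M}_- \otimes \id)(\id \otimes \Deltam_{\circ})\Deltam_r T^\Labn_\Labe$, with matching combinatorial factors, and conversely. Summing over all decompositions and applying ${\mathcal M}_-$ to merge the two forest factors then yields the identity for a single decorated tree, which extends by linearity to all of $\CT$.
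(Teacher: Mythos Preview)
Your approach is exactly the paper's: the same bijection $A \leftrightarrow (S, A')$ with $S \in \Adm^r(T)$ and $A' \in \Adm^\circ(\CR_S T)$, the same factorisation of the binomial coefficients via the Chu--Vandermonde identity, and the same identification $\CR_A T = \CR_{A'}\CR_S T$.

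The one place you diverge is the boundary-edge bookkeeping, and there your resolution is not quite right. You invoke a half-edge convention for $\partial(A,T)$, but neither this paper nor \cite{BHZ} uses half-edges: $\Labe_A$ is a function on \emph{edges}, and a half-edge index set would change the sum $\sum_{\Labe_A}$ in the very definition of $\Deltam$. If one took the paper's written definition literally (``edges in $E_T\setminus E_A$ adjacent to $N_A$''), your worry would indeed be genuine, since the edge from a node of $S$ to the root of an adjacent component of $A'$ would lie in both $\partial(S,T)$ and $\partial(A',\CR_S T)$. The operative convention, however, is oriented: $\partial(A,T)$ consists of those $e\notin E_A$ whose \emph{parent} endpoint lies in $N_A$ (this is what makes $\pi\Labe_A$ a node decoration on $A$ and is how the paper's own proof reads the definition). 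Under this reading the disjointness is immediate --- a boundary edge of $S$ has parent in $N_S$, a boundary edge of $A'$ in $\CR_S T$ has parent in $N_{A'}$, and $N_S\cap N_{A'}=\emptyset$ --- and one obtains the literal disjoint union $\partial(A,T)=\partial(S,T)\sqcup\partial(A',\CR_S T)$. The edge you flag lands only in $\partial(S,T)$, because in $\CR_S T$ its parent is the contracted root, which is not in $N_{A'}$. So no half-edges are needed, and the factorisations $\Labe_A!=\Labe_S!\,\Labe_{A'}!$ and $\pi\Labe_A\restr N_S=\pi\Labe_S$, $\pi\Labe_A\restr N_{A'}=\pi\Labe_{A'}$ follow directly.
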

holds on $ \hat \CT_{-} $.
\begin{proof}
By multiplicativity on $\hat \CT_-$, it is enough to prove the equality on all $T_\Labe^\Labn\in B_{\circ}$. Note that
\begin{equs}
(\id \otimes \Deltam_\circ)\Deltam_r T_\Labe^\Labn= &\sum_{A\in\Adm^r(T)} \sum_{\Labe_A,\Labn_A} {1\over \Labe_A!} \binom{\Labn}{\Labn_A}\sum_{B\in\Adm^\circ(\CR_A T)} \sum_{\Labe_B,\Labn_B} {1\over \Labe_B!} \binom{ [\Labn - \Labn_A]_A}{\Labn_B} \cdot
\\
&\quad \cdot \left( (A, \Labn_A + \pi \Labe_A,  \Labe \restr A ) \cdot
  (B, \Labn_B + \pi \Labe_B,\Labe+\Labe_A \restr B) \right)
 \\ & \otimes ( \CR_B \CR_A T,   [[ \Labn-\Labn_A-\Labn_B ]_A ]_B,\Labe+\Labe_A + \Labe_B)
\end{equs}
and therefore   
\begin{equs}
( &{\mathcal M}_{-}\otimes \id)( \id \otimes\Deltam_\circ)\Deltam_r \\ = 
 &\sum_{A\in\Adm^r(T)} \sum_{\Labe_A,\Labn_A} {1\over \Labe_A!} \binom{\Labn}{\Labn_A}\sum_{B\in\Adm^\circ(\CR_A T)} \sum_{\Labe_B,\Labn_B} {1\over \Labe_B!} \binom{[\Labn - \Labn_A]_A}{\Labn_B} \cdot
\\
&\quad \cdot (A, \Labn_A + \pi \Labe_A,  \Labe \restr A )\otimes 
  (B, \Labn_B + \pi \Labe_B,\Labe+\Labe_A \restr B)
 \\ & \otimes ( \CR_B \CR_A T,  [[ \Labn-\Labn_A-\Labn_B ]_A]_B,\Labe+\Labe_A + \Labe_B)
\end{equs}
At this point, we note that since $B \in \Adm^\circ(\CR_A T)$, 
$\Labe_A$ and $\Labe_B$ have disjoint support so that 
$\Labe_A!\Labe_B! = (\Labe_A + \Labe_B)!$. Similarly, thanks to the fact
that $\Labn_B$ has support away from the root of $\CR_A T$, one
has 
\begin{equ}
\binom{[\Labn - \Labn_A]_A}{\Labn_B} = \binom{\Labn - \Labn_A}{\Labn_B}\;,
\end{equ}
so that 
\begin{equs}
\binom{\Labn}{\Labn_A}\binom{[\Labn - \Labn_A]_A}{\Labn_B}
&= 
{\Labn! (\Labn-\Labn_A)! \over \Labn_A! (\Labn-\Labn_A)! \Labn_B! (\Labn-\Labn_A-\Labn_B)!} \\
&=
{\Labn! \over (\Labn_A+ \Labn_B)! (\Labn-\Labn_A-\Labn_B)!}
= 
\binom{\Labn}{\Labn_A+ \Labn_B}\;.
\end{equs}
We note also that the map $(A,B)\mapsto C = A \cup B$ is a bijection between
$\{(A,B):A\in\Adm^r(T), B\in\Adm^\circ(\CR_A T)\}$ and $\Adm(T)$, since every $C \in \Adm(T)$ is either in
$\Adm^\circ(T)$ (if none of the subtrees touches the root of $T$), 
or of the form $\{S\} \cup B$, where $\rho_S=\rho_T$ and $B \in \Adm^\circ(\CR_A T)$. Moreover setting,
$\Labe_C = \Labe_A + \Labe_B$ and $\Labn_C = \Labn_A + \Labn_B$,
the above sum can also be rewritten as
\begin{equs}
 \sum_{C\in\Adm(T)}\sum_{\Labn_C,\Labe_C}
{1\over \Labe_C!} \binom{\Labn}{\Labn_C}  
\,(C, \Labn_C + \pi \Labe_C, \Labe \restr C ) 
\\   \otimes
 (\CR_C T, [ \Labn - \Labn_C ]_C,\Labe + \Labe_C) = \Deltam T_\Labe^\Labn.
 \end{equs}
This concludes the proof.
\end{proof}

\begin{corollary}\label{recursive_f}
Let $\ell \in \mathcal{G}_{-}$, $R_\ell \eqdef (\ell \otimes \id)\Deltam_r$  and $M^{\circ}_\ell \eqdef (\ell \otimes \id)\Deltam_{\circ}$. Then $ M_{\ell} = M^\circ_{\ell} R_{\ell} $.
\end{corollary}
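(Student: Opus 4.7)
The plan is to deduce the corollary directly from Proposition~\ref{factorisation_admt} by applying the character $\ell$ to the first tensor factor of the factorisation $({\mathcal M}_{-} \otimes \id)(\id \otimes \Deltam_{\circ})\Deltam_r=\Deltam$ and exploiting the multiplicativity of $\ell$.

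First, by definition, $M_{\ell} = (\ell \otimes \id) \Deltam$, so substituting the factorisation gives
\begin{equs}
M_{\ell} = (\ell \otimes \id)({\mathcal M}_{-} \otimes \id)(\id \otimes \Deltam_{\circ})\Deltam_r .
\end{equs}
Since $\ell \in \CG_-$ is a character on $(\CT_-,\cdot)$, one has $\ell \circ {\mathcal M}_- = \ell \otimes \ell$ as a map $\CT_- \otimes \CT_- \to \R$, so that
\begin{equs}
(\ell \otimes \id)({\mathcal M}_- \otimes \id) = (\ell \otimes \ell \otimes \id)
\end{equs}
as maps from $\CT_-\otimes\CT_-\otimes\CT$ to $\CT$.

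Next I would reorganise the evaluations from left to right. Writing $\Deltam_r \tau = \sum \tau^{(1)}\otimes \tau^{(2)}$, one gets
\begin{equs}
(\ell\otimes\ell\otimes \id)(\id \otimes \Deltam_\circ)\Deltam_r \tau
= \sum \ell(\tau^{(1)}) \, (\ell \otimes \id)\Deltam_\circ \tau^{(2)}
= \sum \ell(\tau^{(1)}) \, M^\circ_\ell \tau^{(2)} .
\end{equs}
By linearity of $M^\circ_\ell$ the right-hand side equals $M^\circ_\ell\bigl(\sum \ell(\tau^{(1)})\tau^{(2)}\bigr) = M^\circ_\ell R_\ell \tau$, which is exactly the claimed identity $M_\ell = M^\circ_\ell R_\ell$.

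There is essentially no obstacle beyond bookkeeping: the computation is a formal consequence of Proposition~\ref{factorisation_admt} together with the fact that $\ell$ is multiplicative. The one point requiring a modicum of care is the domain on which the factorisation is used. Proposition~\ref{factorisation_admt} is stated on $\hat\CT_-$ but we are applying it to elements of $\CT$ via $\Deltam_r$; since $\Deltam$ and $\Deltam_\circ$ are first defined at the level of decorated trees before passing to the quotient $\CT_-$, and $\ell$ vanishes on $\CJ_+$ by definition of $\CG_-$, the identity descends to the quotient without change, and the argument goes through verbatim.
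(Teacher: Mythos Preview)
Your proof is correct and follows essentially the same approach as the paper's: both arguments combine Proposition~\ref{factorisation_admt} with the multiplicativity of $\ell$, the only cosmetic difference being that the paper starts from $M^\circ_\ell R_\ell$ and arrives at $M_\ell$, whereas you run the computation in the opposite direction. Your closing remark about the domain is a reasonable point of care that the paper leaves implicit.
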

\begin{proof}
Note that
\begin{equs}
M^\circ_{\ell} R_{\ell} = 
\left( \ell \otimes \id \right) \Deltam_{\circ} \left( \ell \otimes \id \right) \Deltam_r =\left( \ell \otimes \ell \otimes \id \right) \left( \id \otimes \Deltam_{\circ} \right) \Deltam_r.
\end{equs}
Now, since $\ell: \CT_-  \mapsto\R$ is multiplicative, we obtain
\begin{equs}
M^\circ_{\ell} R_{\ell} = (\ell\ \otimes \id)({\mathcal M}_{-}\otimes \id)( \id \otimes \Deltam_{\circ})\Deltam_r =(\ell \otimes \id)\Deltam =M_\ell.
\end{equs}
\end{proof}

 Through the next tree $ T$, we illustrate the Proposition \ref{factorisation_admt} by considering $ A =  \lbrace S_1, S_2, S_3 \rbrace \in \Adm(T) $: 

\begin{equs}
 T =  \begin{tikzpicture}[scale=0.2,baseline=0.2cm]
        \node at (0,0)  [dot,green,label={[label distance=-0.2em]below: \scriptsize  $ \rho_{S_3} $}] (root) {};
         \node at (-8,6)  [dot,label={[label distance=-0.2em]above: \scriptsize  $ \ell_1 $} ] (leftll) {};
          \node at (-1,1.5)  [dot,green,label={[label distance=-0.2em]above: \scriptsize  $ \ell_5 $} ] (center) {};
           \node at (1,1.5)  [dot,green,label={[label distance=-0.2em]above: \scriptsize  $ \ell_6 $} ] (center1) {};
          \node at (-6,4)  [dot,red,label=left:] (leftl) {};
          \node at (-4,6)  [dot,red,label={[label distance=-0.2em]above: \scriptsize  $ \ell_3 $}] (leftlr) {};
          \node at (-6,6)  [dot,label={[label distance=-0.2em]above: \scriptsize  $ \ell_2 $}] (leftlc) {};
      \node at (-4,2)  [dot,color=red,label={[label distance=-0.2em]below: \scriptsize  $ \rho_{S_1} $}] (left) {};
         \node at (-2,4)  [dot,red,label={[label distance=-0.2em]above: \scriptsize  $ \ell_4 $}] (leftr) {};
         \node at (2,4)  [dot,red,label=above:   ] (rightl) {};
          \node at (1,6)  [dot,label={[label distance=-0.2em]above: \scriptsize  $ \ell_7 $} ] (rightll) {};
           \node at (3,6)  [dot,red,label={[label distance=-0.2em]above: \scriptsize  $ \ell_8 $}] (rightlr) {};
           \node at (6,4)  [dot,red,label=left:] (rightr) {};
            \node at (5,6)  [dot,red,label={[label distance=-0.2em]above: \scriptsize  $ \ell_9 $}] (rightrl) {};
        \node at (4,2) [dot,red,label={[label distance=-0.4em]  below right: \scriptsize  $ \rho_{S_2} $}] (right) {};
         \node at (7,6)  [dot,label={[label distance=-0.2em]above: \scriptsize  $ \ell_{10} $} ] (rightrr) {};
        
        \draw[kernel1] (left) to node [sloped,below] {\small } (root); ;
        \draw[kernel1,red] (leftl) to
     node [sloped,below] {\small }     (left);
     \draw[kernel1,green] (center) to
     node [sloped,below] {\small }     (root);
     \draw[kernel1,green] (center1) to
     node [sloped,below] {\small }     (root);
     \draw[kernel1,red] (leftlr) to
     node [sloped,below] {\small }     (leftl); 
     \draw[kernel1] (leftll) to
     node [sloped,below] {\small }     (leftl);
     \draw[kernel1,red] (leftr) to
     node [sloped,below] {\small }     (left);  
        \draw[kernel1] (right) to
     node [sloped,below] {\small }     (root);
      \draw[kernel1] (leftlc) to
     node [sloped,below] {\small }     (leftl); 
     \draw[kernel1,red] (rightr) to
     node [sloped,below] {\small }     (right);
     \draw[kernel1] (rightrr) to
     node [sloped,below] {\small }     (rightr);
     \draw[kernel1,red] (rightrl) to
     node [sloped,below] {\small }     (rightr);
     \draw[kernel1,red] (rightl) to
     node [sloped,below] {\small }     (right);
     \draw[kernel1,red] (rightlr) to
     node [sloped,below] {\small }     (rightl);
     \draw[kernel1] (rightll) to
     node [sloped,below] {\small }     (rightl);
     \end{tikzpicture}
      \longrightarrow   
 A = 
     \begin{tikzpicture}[scale=0.2,baseline=0.2cm]
        \node at (0,0)  [dot,red,label={[label distance=-0.2em]below: \scriptsize  $ \rho_{S_1} $}] (root) {};
         \node at (1,2)  [dot,red,label={[label distance=-0.2em]above: \scriptsize  $ \ell_4 $}] (right) {};
         \node at (-1,2)  [dot,red,label=left:] (left) {};
         \node at (0,4)  [dot,red,label={[label distance=-0.2em]above: \scriptsize  $ \ell_3 $} ] (leftr) {};
            \draw[kernel1,red] (right) to
     node [sloped,below] {\small }     (root); \draw[kernel1,red] (left) to
     node [sloped,below] {\small }     (root);
      \draw[kernel1,red] (leftr) to
     node [sloped,below] {\small }     (left);
     \end{tikzpicture} 
     \begin{tikzpicture}[scale=0.2,baseline=0.2cm]
        \node at (0,0)  [dot,red,label={[label distance=-0.2em]below: \scriptsize  $ \rho_{S_2} $}] (root) {};
         \node at (1,2)  [dot,red,label=right:] (right) {};
         \node at (-1,2)  [dot,red,label=left:] (left) {};
         \node at (-1,4)  [dot,red,label={[label distance=-0.2em]above: \scriptsize  $ \ell_8 $}] (leftr) {};
          \node at (1,4)  [dot,red,label={[label distance=-0.2em]above: \scriptsize  $ \ell_9 $} ] (rightr) {};
            \draw[kernel1,red] (right) to
     node [sloped,below] {\small }     (root); \draw[kernel1,red] (left) to
     node [sloped,below] {\small }     (root);
      \draw[kernel1,red] (leftr) to
     node [sloped,below] {\small }     (left);
     \draw[kernel1,red] (rightr) to
     node [sloped,below] {\small }     (right);
     \end{tikzpicture}
     \begin{tikzpicture}[scale=0.2,baseline=0.2cm]
        \node at (0,0)  [dot,green,label={[label distance=-0.2em]below: \scriptsize  $ \rho_{S_3} $}] (root) {};
         \node at (1,2)  [dot,green,label={[label distance=-0.2em]above: \scriptsize  $ \ell_6 $}] (right) {};
         \node at (-1,2)  [dot,green,label={[label distance=-0.2em]above: \scriptsize  $ \ell_5 $}] (left) {};
            \draw[kernel1,green] (right) to
     node [sloped,below] {\small }     (root); \draw[kernel1,green] (left) to
     node [sloped,below] {\small }     (root);
     \end{tikzpicture}.
     \end{equs}
     We have $ \lbrace S_3 \rbrace \in \Adm^r(T) $ and $ \lbrace  S_1 \,, S_2 \rbrace \in \Adm^{\circ}(\CR_{S_3}T) $. 
     
 In order to be able to prove the next proposition, we recall the definition of $ \Delta_2 $ given in \cite{BHZ}.  The map $ \Delta_2 :  \CT \rightarrow \CT \otimes \hat \CT_{+} $ is given for $ T^{\Labn}_{\Labe} \in \CT $ by: 
\begin{equs}\label{co-action}
 \Delta_2  T^{\Labn}_{\Labe} & = 
 \sum_{A \in  \Adm^+(T)} \sum_{\Labe_A,\Labn_A}  \frac1{\Labe_A!}
\binom{\Labn}{\Labn_A}
 (A,\Labn_A+\pi\Labe_A, \Labe \restr E_A) 
 \\& \qquad  \otimes   ( \CR_A T,2, [\Labn - \Labn_A]_A, \Labe + \Labe_A)\;, 
\end{equs}
where $ \Adm^+(T) $ is the set of subtrees $ A $ of $ T $ such that $ \rho_{A} = \rho_T $. Then one obtains $ \Deltap $ by applying $ \Pi_+ $: $ \Deltap = (\id \otimes \Pi_+ ) \Delta_2 $.

\begin{proposition} \label{commutation_R}
We have 
\begin{equs}
(\id \otimes \Deltap) \Deltam_r =  (\Deltam_r \otimes \id)\Deltap.
\end{equs} 
Moreover for all $ \ell \in \CG_{-} $, $ R_{\ell} $ commutes with $ G $.
\end{proposition}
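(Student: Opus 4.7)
The proposition has two parts. The first and main part is the cointeraction identity
\[
(\id \otimes \Deltap)\Deltam_r = (\Deltam_r \otimes \id)\Deltap,
\]
where $\Deltap$ on $\CT$ is read as the co-action $\Delta : \CT \to \CT \otimes \CT_+$ of Section~\ref{section 2}, so that both sides are maps $\CT \to \CT_- \otimes \CT \otimes \CT_+$. Once this is established, commutation of $R_\ell$ with the structure group is immediate. I would prove the identity by induction on the number of edges of $\tau$, and then carry out a one-line character-chasing computation for the second claim.

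The base cases $\tau \in \{\one, X_i, \Xi_\mfl\}$ are trivial because $\Deltam_r$ returns $\one_1 \otimes \tau$ on such symbols (the only $A \in \Adm^r$ are the empty forest and the bare root, and for the latter the contraction is trivial) and $\Delta$ has its elementary form. For a product $\tau = \tau_1 \tau_2$, both sides of the desired identity are multiplicative: $\Delta$ is multiplicative by construction, and $\Deltam_r$ is multiplicative on $\CT$ for the appropriate product on $\CT_- \otimes \CT$, since every $A \in \Adm^r(\tau_1 \tau_2)$ decomposes uniquely as a disjoint union $A_1 \sqcup A_2$ with $A_i \in \Adm^r(\tau_i)$ and at most one of the $A_i$ nonempty; the factorial and binomial coefficients split accordingly, exactly as in Proposition~\ref{mult_Mcirc}. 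Hence the inductive step reduces to the single case $\tau = \CI^\Labhom_k(\sigma)$.

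For $\tau = \CI^\Labhom_k(\sigma)$, the set $\Adm^r(\tau)$ splits into three pieces: the empty forest (giving $\one_1 \otimes \CI^\Labhom_k(\sigma)$), the bare root $\{\rho_\tau\}$ summed over edge decorations $\Labe_A$ placed on the unique edge $e$ incident to $\rho_\tau$ (producing a term $X^{\pi \Labe_A} \otimes \CI^\Labhom_{k + \Labe_A}(\sigma)/\Labe_A!$), and the forests containing $\{\rho_\tau, e\} \cup A''$ with $A'' \in \Adm^r(\sigma)$ (producing a term $(\id \otimes \CI^\Labhom_k)\Deltam_r\sigma$). This recursion for $\Deltam_r$ mirrors termwise the recursion for $\Delta \CI^\Labhom_k(\sigma)$ from Section~\ref{section 2}. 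Inserting both into the two sides of the identity, using the induction hypothesis on $\sigma$, and comparing contributions shows that the two expressions agree: the polynomial decorations $X^\ell/\ell!$ coming from $\Delta$ are carried transparently by the second tensor factor, and the root-subtree $A$ acts on a region disjoint from the subtrees cut off by $\Delta$, so that the edge decorations $\Labe_A$ and the integration-cut decorations in $\CJ^\Labhom_{k+\ell}$ do not interact. The main obstacle is exactly this bookkeeping of decorations and factorials at the root edge $e$.

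With the cointeraction identity proved, the second claim follows at once. Any $\Gamma \in G$ is of the form $\Gamma_g = (\id \otimes g)\Delta$ for some $g \in \CG_+$, and then, applying linearity and the cointeraction,
\begin{equs}
R_\ell \Gamma_g \tau &= (\ell \otimes \id)\Deltam_r (\id \otimes g)\Delta \tau = (\ell \otimes \id \otimes g)(\Deltam_r \otimes \id)\Delta \tau,\\
\Gamma_g R_\ell \tau &= (\id \otimes g)\Delta (\ell \otimes \id)\Deltam_r \tau = (\ell \otimes \id \otimes g)(\id \otimes \Delta)\Deltam_r \tau,
\end{equs}
and the two right-hand sides coincide by the identity just established, giving $R_\ell \Gamma_g = \Gamma_g R_\ell$ for every $\Gamma_g \in G$.
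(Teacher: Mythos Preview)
Your final paragraph deriving $R_\ell \Gamma_g = \Gamma_g R_\ell$ from the cointeraction identity is correct and identical to the paper's argument. The problems lie in your inductive proof of the identity itself.

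\textbf{The product step is wrong.} You claim that every $A \in \Adm^r(\tau_1\tau_2)$ decomposes as $A_1 \sqcup A_2$ with $A_i \in \Adm^r(\tau_i)$ and at most one nonempty. This is false: a nonempty $A = \{S\}$ is a single subtree rooted at $\rho_\tau$, and $S$ can perfectly well contain edges from both $\tau_1$ and $\tau_2$ (e.g.\ $S = \tau_1\tau_2$ itself). In that case $S$ splits as a tree product $S_1 S_2$ of root-subtrees $S_i \subset \tau_i$, both nonempty, and the left factor of $\Deltam_r$ produces the \emph{single} tree $S_1 S_2$, not the forest $\{S_1\}\cdot\{S_2\}$. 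So $\Deltam_r$ is not multiplicative for the forest product on $\CT_-$, and your justification collapses. (There is a sense in which $\Deltam_r$ is multiplicative, namely for the tree product on the first factor, which one sees from the formula $\Deltam_r = (\Pi_-\circ\tilde\Pi\otimes\CR_2)\Delta_2$; but that is not what you wrote, and it requires further work to make the induction go through.)

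\textbf{The planted-tree step has the integration on the wrong factor.} In your case 3, with $A = \{\rho_\tau, e\}\cup A'$, the \emph{extracted} tree on the left is $\CI_k^\Labhom(A')$ and the \emph{contracted} tree on the right is $\CR_{A'}\sigma$; hence the contribution is of the form $(\CI_k^\Labhom \otimes \id)(\cdots)$, not $(\id\otimes\CI_k^\Labhom)\Deltam_r\sigma$ as you wrote. Swapping the factor carrying $\CI_k^\Labhom$ makes the subsequent comparison with $\Deltap\CI_k^\Labhom(\sigma)$ break down.

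\textbf{The paper's route is completely different.} Rather than induction, the paper observes that $\Deltam_r = (\Pi_-\circ\tilde\Pi \otimes \CR_2)\Delta_2$ (identity~\eqref{e:deltar}), i.e.\ $\Deltam_r$ is a projection of the co-action $\Delta_2$ of~\eqref{co-action}. One then has
\[
(\id\otimes\Delta_2)\Deltam_r
= (\tilde\Pi_-\otimes(\CR_2\otimes\id)\Delta_2)\Delta_2
= ((\tilde\Pi_-\otimes\CR_2)\Delta_2\otimes\id)\Delta_2
= (\Deltam_r\otimes\id)\Delta_2,
\]
using only the coassociativity of $\Delta_2$ from \cite{BHZ} and the elementary identity $\Delta_2\CR_2 = (\CR_2\otimes\id)\Delta_2$. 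Applying $\id\otimes\id\otimes\Pi_+$ gives the stated result. This avoids all the combinatorial bookkeeping at the root edge that your sketch attempts to do by hand.
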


\begin{proof}
The result follows from the identity: 
\begin{equs} \label{commutation_delta}
(\id \otimes \Delta_2) \Deltam_r =  (\Deltam_r \otimes \id)\Delta_2.
\end{equs}
Indeed, one has $ \Deltap = \left( \id \otimes \Pi_+  \right) \Delta_2 $ which gives:
\begin{equs}
\left( \id \otimes \Deltap \right) \Delta_r^- & = \left( \id \otimes \left( \id \otimes \Pi_+ \right) \Delta_2 \right) \Delta_r^- \\
& = \left( \Deltam_r \otimes \Pi_+  \right) \Delta_2 =  \left( \Deltam_r \otimes \id  \right) \Deltap.
\end{equs}
So it remains to prove \eqref{commutation_delta}. We first notice the next identity between $ \Deltam_r $ and $ \Delta_2 $:
\begin{equs} \label{e:deltar}
\Deltam_r = \left( \Pi_- \circ \tilde{\Pi} \otimes \mathcal{R}_{2} \right) \Delta_2,
\end{equs} 
where $ \mathcal{R}_{2} $ is the map which sends
$ (T,2)^{\Labn}_{\Labe} $ to $ T^{\Labn}_{\Labe} $ or
which removes the colour two at the root and $ \tilde{\Pi} : \CT \rightarrow \hat \CT_- $ is the projection which maps single node without decoration to the empty forest, any other tree is identified with the forest containing only this tree. The map $  \tilde{\Pi} $ is extended multiplicatively to the space $ \hat \CT_- $. Then \eqref{commutation_delta} follows from the co-associativity of $\Delta_2$ proved in \cite{BHZ}. Indeed by setting $ \tilde{\Pi}_- = \Pi_- \circ \tilde{\Pi}$, we have
\begin{equs}
( \id \otimes \Delta_2) \Deltam_r & = (\tilde{\Pi}_- \otimes \Delta_2 \mathcal{R}_2 ) \Delta_2 =  
(\tilde{\Pi}_- \otimes ( \mathcal{R}_2 \otimes \id ) \Delta_2  ) \Delta_2 \\ & = \left( (\tilde{\Pi}_- \otimes \mathcal{R}_{2}) \Delta_2 \otimes \id \right) \Delta_2 = (\Deltam_r \otimes \id ) \Delta_2. 
\end{equs}
Now for all $ \ell \in \CG_{-} $ and $g\in \CG_+$
\begin{equs}
R_\ell \Gamma_g=(\ell \otimes \id \otimes g )(\Deltam_r \otimes \id)\Deltap
=  (\ell \otimes \id \otimes g )(\id \otimes \Deltap)\Deltam_r =\Gamma_gR_\ell.
\end{equs}
\end{proof}

We finish this subsection by the proof of
the Theorem~\ref{inclusion theorem}:

\begin{proof}[Theorem~\ref{inclusion theorem}]
Let $ M \in \mathfrak{R}[\mathscr{T}] $, there exists $ \ell \in \CG_- $ such that
\begin{equs}
M = M_{\ell} = \left( \ell \otimes \id \right) \Deltam.
\end{equs}
From Corollary \ref{recursive_f}, we know that $ M_{\ell} = M^{\circ}_{\ell} R_{\ell} $ where
\begin{equs}
M^{\circ}_{\ell} = \left( \ell \otimes \id \right) \Deltam_{\circ}, \quad R_{\ell} = \left( \ell \otimes \id \right) \Deltam_{r}.
\end{equs} 
We need to check that $  R_{\ell} \in \mathcal{L}_{ad}(\CT) $ and that $ M^{\circ}_{\ell} $ satisfies the recursive definition \eqref{e:defM}. For $ M^{\circ}_{\ell} $, this property comes from Proposition \ref{mult_Mcirc} and from \eqref{recursive_identity_Mcirc} which gives for every $ X^n \CI_k^{\Labhom}(T^{\Labn}_{\Labe}) \in \CT $:
\begin{equs} \Deltam_{\circ}  X^n \CI_k^{\Labhom}(T^{\Labn}_{\Labe}) =( \id\otimes X^{\Labn}\I^{\Labhom}_{k})\Deltam  T^\Labn_\Labe.
\end{equs}
For $ R_{\ell} $, we first notice that for every
$ \tau \in \CT $, $\Deltam_{r} \tau $ is of the form
    $ \one_{1} \otimes \tau +  \sum_i \tau_i^{(1)} \otimes \tau^{(2)}_i  $ where 
    $  | \tau |_{\s} = | \tau_i^{(1)} |_{\s} + | \tau_i^{(2)} |_{\s} $ and $ || \tau || = || \tau_i^{(1)} || + || \tau_i^{(2)} || $, $ || \tau_i^{(2)} || <  || \tau ||$.
    The tree $ \tau_i^{(1)}  $ is of negative degree thus $  | \tau |_{\s} \geq | \tau_i^{(2)} |_{\s} $ and we get the following properties:$
    \Vert R_{\ell} \tau - \tau \Vert < \Vert \tau \Vert $ and $\vert R_{\ell} \tau - \tau \vert_{\s} < \vert \tau \vert_{\s} $.
    Let $ \CI_k^{\Labhom}(\tau) \in \hat B_{\circ} $, then by definition of $ \Adm^{r}(\CI_k^{\Labhom}(\tau)) $ we get $ \Adm^{r}(\CI_k^{\Labhom}(\tau)) \setminus (\hat B_{\circ} \cap \Adm^{r}(\CI_k^{\Labhom}(\tau)) ) = \emptyset $ which yields:
    \begin{equs}
    R_{\ell} \CI_k^{\Labhom}(\tau) =
    \left( \ell \otimes \id \right) \Deltam_{r} \CI_k^{\Labhom}(\tau) = \left( \ell \otimes \id \right) \left( \one_{1} \otimes  \CI_k^{\Labhom}(\tau) \right) = \CI_k^{\Labhom}(\tau).
    \end{equs}
  As the same any power of $ X $ at the root is killed by the character $ \ell $ which proves 
  the identity  $ R_{\ell} X^k \tau = X^k R_{\ell} \tau $ for every $ \tau \in \CT $.  
    The fact that $ R_{\ell} $ commutes with the structure group proceeds from Proposition~\ref{commutation_R}. From all these properties, we have $ M_{\ell} \in  \mathfrak{R}_{ad}[\mathscr{T}] $ which concludes the proof.
\end{proof}

\subsection{Recursive formula for the renormalisation group}

We want to derive a recursive formula for $ \Deltam $ by using the symbolic notation. The recursive formulation is based on the tree product and the inductive definition of the trees. The main difficulty at this point is that one can derive easily a recursive formula for $ \Deltap $ which is multiplicative for the tree product but not for $ \Deltam $. In order to recover this multiplicativity, we define a slight modification $ \hat \Delta_1 $ of $ \Deltam $ by adding more information. The main idea is to distinguish a tree in a forest.
This formulation allows to have a recursive definition for the two coproducts $ \Deltam $
and $ \Deltap $.

\begin{definition}
We define  $ \Tra_{\rho}$ the space of $ (F,\Labn,\Labe,\rho) = (F_{\Labe}^{\Labn},\rho) $ such that $ F $ is non- empty and such that $ \rho $ is the root of one connected component $ T_{\varrho} $ of $ F $. The space $ \Tra_{\rho}  $ can be viewed as the non-empty decorated forests where we distinguish one tree. We endow this space with a product $ \star $ defined as follows: 
\begin{equs}
(F,\Labn,\Labe,\rho) \star (\bar F,\bar \Labn  ,\bar \Labe, \bar \rho ) = 
(F_{\rho}^{c} \sqcup \bar F_{\bar \rho}^{c} \sqcup  \lbrace T_{\rho}\bar T_{\bar \rho} \rbrace, \Labn + \bar \Labn, \Labe + \bar \Labe , \tilde{\rho}).
\end{equs}
where $ F_{\rho}^{c} = F \setminus 
\lbrace T_{\rho} \rbrace $. We have also identified the two roots $ \rho $ and $ \bar \rho $ with $ \tilde{\rho} $ which means that
$ \Labn(\tilde{\rho}) = \Labn(\rho) + \bar \Labn(\bar \rho) $. We denote by $ \Trees_{\rho}  $ the forests $ (F^{\Labn}_{\Labe},\rho) \in \Tra_{\rho} $ such that $ F_{\rho}^{c}   $ is empty. Then we define a canonical
injection $ \iota_{\rho} : \Trees \rightarrow \Trees_{\rho} $, $ \iota_{\rho}(T_{\Labe}^{\Labn}) = ( T_{\Labe}^{\Labn}, \rho_T ) $ and we denote by $ \Pi^{\rho} $ its left inverse which is defined by:
\begin{equs}
\Pi^{\rho} :  \Tra_{\rho} \rightarrow  \Tra, \quad 
 \Pi^{\rho}(  (F,\Labn,\Labe,\rho) ) = (F,\Labn,\Labe).
\end{equs}
\end{definition}

\begin{definition} \label{adm_rho}
We define $ \Adm(F,\rho) $ as the family of all 
$ (A,\rho) \in \Tra_{\rho} $ such that $ A \in \Adm(F)$ and $ A $ contains all the nodes of the forest $ F $. 
\end{definition}

\begin{remark}
In the Definition~\ref{adm_rho}, we extract all the nodes
because we want to derive a recursive formula for $ \Deltam $ which means that one is not able to decide immediately during the recursive 
procedure in \eqref{e::recursive_delta1} if one node will belong to a tree extracted by $ \Deltam $.  
\end{remark}

We define $ \hat \Delta_1 : \Tra_{\rho} \rightarrow \Tra_{\rho} \otimes \Tra_{\rho}  $ in the following way:
\begin{equs}\label{def:coproduct_root}
& \hat \Delta_1 (F, \Labn,\Labe, \rho) = 
 \sum_{(A,\rho) \in \Adm(F,\rho)} \sum_{\Labe_A,\Labn_A} \frac1{\Labe_A!}
\binom{\Labn}{\Labn_A}
 (A,\Labn_A+\pi\Labe_A, \Labe \restr E_A, \rho) 
 \\& \qquad  \otimes( \CR_A F, [\Labn - \Labn_A]_A, \Labe + \Labe_A, \rho)\;. 
\end{equs}
The infinite sum makes sense as the same as in
\cite{BHZ} by using the bigraded structure of  $ \Tra_{\rho}$ and the fact that $ \hat \Delta_1 $ is a triangular map.

\begin{proposition} \label{proposition_coassociativity_delta1}
The map $ \hat \Delta_1 $ is multiplicative for $ \star $ and one has on $ \Tra_{\rho} $:
\begin{equs} \label{coassociativity_delta1}
( \id \otimes \hat \Delta_1   ) \hat\Delta_1  = ( \hat \Delta_1 \otimes \id   ) \hat\Delta_1
\end{equs}
\end{proposition}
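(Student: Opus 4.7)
The plan is to prove multiplicativity first and then use it, together with a direct combinatorial bijection between iterated subgraph extractions, to establish coassociativity.

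For multiplicativity under $\star$, I would observe that in $(F,\Labn,\Labe,\rho)\star(\bar F,\bar\Labn,\bar\Labe,\bar\rho)$ the only interaction between the two factors occurs at the merged root $\tilde\rho$, obtained by identifying $\rho$ and $\bar\rho$. Since edges from $E_F$ and from $E_{\bar F}$ remain disjoint, every admissible spanning subgraph $A$ of the product canonically splits as $A_1\sqcup A_2$ with $A_1\in\Adm(F,\rho)$ and $A_2\in\Adm(\bar F,\bar\rho)$, the two halves sharing only the node $\tilde\rho$ in their distinguished components. The edge decorations split as $\Labe_A = \Labe_{A_1}+\Labe_{A_2}$ with disjoint supports, so $\Labe_A!=\Labe_{A_1}!\,\Labe_{A_2}!$. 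For the node decorations, all nodes other than $\tilde\rho$ come from exactly one side, so it suffices to handle the merged root: by Vandermonde's identity,
\[
\binom{\Labn(\rho)+\bar\Labn(\bar\rho)}{\Labn_A(\tilde\rho)} = \sum_{c+d=\Labn_A(\tilde\rho)}\binom{\Labn(\rho)}{c}\binom{\bar\Labn(\bar\rho)}{d},
\]
and the residual decoration $[\Labn-\Labn_A]_A$ on $\tilde\rho$ splits accordingly because the connected component containing $\tilde\rho$ in $\CR_A$ of the product is exactly the $\star$-merge of the analogous components in $\CR_{A_1}F$ and $\CR_{A_2}\bar F$. Assembling these pieces reproduces the factorwise formula $\hat\Delta_1(F,\rho)\star\hat\Delta_1(\bar F,\bar\rho)$.

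For coassociativity, I would establish the standard bijection between chains $B\subset A$ of admissible spanning subgraphs of $(F,\rho)$ and pairs consisting of an admissible spanning $B\subset F$ together with an admissible spanning subgraph of $(\CR_B F,\rho)$. Concretely, starting from a pair $(A, B)$ with $B\subset A$, both spanning and both containing the distinguished root, the image $A/B$ obtained by collapsing the components of $B$ inside $A$ is an admissible spanning subgraph of $\CR_B F$, and the converse lift is unambiguous. The key point is that because both subgraphs span all nodes, the distinguished root $\rho$ (or more precisely, its equivalence class) persists through every contraction, so the rooted structure is preserved. With this bijection in hand, the coassociativity identity reduces to checking that the decoration factors match on both sides; this is done by writing $\Labe_A=\Labe_B+\Labe_{A/B}$ (disjoint support, so $\Labe_A!=\Labe_B!\,\Labe_{A/B}!$), splitting $\Labn_A=\Labn_B+\Labn_{A/B}$, and applying the two-step binomial identity
\[
\binom{\Labn}{\Labn_A}\binom{[\Labn-\Labn_A]_A}{\cdot} \;=\; \binom{\Labn}{\Labn_B}\binom{\Labn-\Labn_B}{\Labn_{A/B}}
\]
at each node (already exploited inside the proof of Proposition~\ref{factorisation_admt}).

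The main obstacle is the bookkeeping of node decorations at the distinguished root during iterated contractions, since $\rho$ belongs to a non-trivial equivalence class in $\CR_B F$ whose decoration is a sum of several $[\Labn-\Labn_B]$-contributions; one must verify that the multinomial expansion of the binomial coefficients at this class produces exactly the sum obtained by first extracting $B$ and then extracting $A/B$. Equivalently, one may bypass this bookkeeping altogether by using the projection $\Pi^\rho\colon\Tra_\rho\to\Tra$ to transport \eqref{coassociativity_delta1} to the coassociativity of $\Deltam$ proved in \cite[Prop.~5.35]{BHZ}; the only nontrivial step in that shortcut is to check that $\hat\Delta_1$ commutes with $\Pi^\rho$ up to the canonical insertion of the distinguished root, which is immediate from the definition \eqref{def:coproduct_root} since $\rho\in A$ for every $(A,\rho)\in\Adm(F,\rho)$.
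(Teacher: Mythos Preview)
Your argument is correct, but it follows the extraction--contraction combinatorics of \cite{BHZ} rather than the route the paper takes. For multiplicativity the paper records only the set-level bijection $\Adm(F,\rho)\star\Adm(\bar F,\bar\rho)=\Adm((F,\rho)\star(\bar F,\bar\rho))$, so your Vandermonde treatment of the merged root actually fills in a detail the paper leaves implicit. For coassociativity, however, the paper's own contribution (in the appendix) is an inductive proof based on the \emph{recursive} description \eqref{e::recursive_delta1}: using multiplicativity for $\star$ it reduces to elements $\CI^{\Labhom}_k(\tau)$, and then both $(\hat\Delta_1\otimes\id)\hat\Delta_1$ and $(\id\otimes\hat\Delta_1)\hat\Delta_1$ are computed symbolically by pushing $\hat\Delta_1$ through the operator $\CI^{\Labhom}_k\otimes\id+\sum_\ell\frac{X^\ell}{\ell!}\Co\otimes\CI^{\Labhom}_{k+\ell}$ and matching the resulting three-tensor expressions term by term. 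The gain of the paper's approach is precisely to avoid the decoration bookkeeping you flag as ``the main obstacle'': no bijection between nested subgraphs is needed, and the equivalence-class issue at $\rho$ never arises because the symbol $\Co$ handles the root automatically. Your approach, by contrast, is closer to being self-contained from the explicit sum \eqref{def:coproduct_root} and does not rely on first establishing the recursion, but it essentially re-derives the BHZ coassociativity that the paper set out to reprove by other means.
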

\begin{proof}
The multiplicativity comes from the fact that 
\begin{equs}
\Adm(F,\rho) \star \Adm(\bar F,\bar \rho) = 
\Adm( (F,\rho) \star (F,\bar \rho) ).
\end{equs}
The rest of the proof can be seen as a consequence of the co-associativity results obtained in \cite{BHZ} see Remark~\ref{remarkproof}. In order to avoid the introduction of too much notations, we provide a direct proof in the appendix using the formula \eqref{e::recursive_delta1} with the symbolic notation.
\end{proof}

\begin{remark} \label{remarkproof}
One can define $ \Adm(F,\rho) $ using the formalism of the colours developed in \cite{BHZ} by $ \Adm_1(F,\hat F,\rho) $ where in this case $ \hat F^{-1}(1) = N_{F} $ and with the difference that we need to carry more information
by keeping track of the root of one tree in $ F $. This means that elements of $ \Tra_{\rho} $ have all their nodes coloured by the colour $ 1 $. Then for the definition of the coproduct, all the nodes are extracted.  We can then apply \cite[Prop. 3.9]{BHZ} without the extended decoration in order to recover the Proposition~\ref{proposition_coassociativity_delta1}.
\end{remark}

\begin{remark}
This coproduct $ \hat \Delta_1 $ contains at the same time the Connes-Kreimer coproduct and the extraction-contraction coproduct. In the sense that if we forget the root $ \rho $, we obtain a variant of the extraction-contraction coproduct where each node needs to be in one extracted subtree. On the other hand, if we quotient by the elements $ (F,\rho) $ such that $ F $ contains a tree with a root different from $ \rho $, we have the Connes-Kreimer coproduct. We make this statement more precise in Proposition~\ref{recursive_coproduct}.
\end{remark}

From  Corollary~\ref{def:coproduct_root}, we can derive a general recursive formula by using the symbolic notation. Before giving it, we need to encode $ (F^{\Labn}_{\Labe},\rho) $. For that, we introduce the new symbol $ \Co $ which is a map from $  \Tra_{\rho} $ into itself. Now $ (\lbrace T_{\rho}, \; T_1, \; ...., \;T_n \rbrace, \rho) $ is given by $ T_{\rho} \prod_{i=1}^n \Co(T_i) $ where $ T_{\rho} $ is the tree with root $ \rho $. Let $ (F^{\Labn}_{\Labe},\tilde{\rho}), (\bar F^{\bar \Labn}_{\bar \Labe},\bar \rho)  \in \Tra_{\rho} $, the product $ \star $ is given by:
\begin{equs}
(F^{\Labn}_{\Labe},\tilde{\rho}) \star (\bar F^{\bar \Labn}_{\Labe},\bar \rho) = T_{\tilde{\rho}} \bar T_{\bar \rho} \prod_{T \in  F_{\tilde{\rho}}^{c}} \Co(T) \prod_{\bar T \in \bar F_{\bar \rho}^{c}} \Co(\bar T). 
\end{equs}
Then the properties of the symbol $ \Co $ are:
\begin{enumerate}
\item The product $ \Co(T)\Co(\bar T) $ is associative and commutative as the product on the forests. Moreover, the map $ \Co $ is multiplicative for the product of forests, $ \Co(F \sqcup \bar F) = \Co(F ) \Co(\bar F) $.
\item The symbol $ \Co $ is also defined as an operator on $ (F^{\Labn}_{\Labe},\rho) $ in the sense that $ \Co((F^{\Labn}_{\Labe},\rho)) $ is the forest $ (F^{\Labn}_{\Labe} \sqcup \lbrace\bullet \rbrace,\bullet) $. Using only the symbolic notation, this can be expressed as:
\begin{equs}
\Co(T_{\rho} \Co( F_{\rho}^{c})) = \Co( T_{\rho}) \Co( F_{\rho}^{c}).
\end{equs}
\item The operator $ \CI^{\Labhom}_{k} $ is extended by acting only on  the tree with the root $ \rho $ in $ (F^{\Labn}_{\Labe},\rho) $:
\begin{equs}
\CI^{\Labhom}_{k}(T_{\rho} \Co(F_{\rho}^c)) = \CI^{\Labhom}_{k}(T_{\rho})\Co(F_{\rho}^c).
\end{equs}
\end{enumerate}

With these properties, we define the map $ \hat \Delta_1 $:
\begin{equs}[e::recursive_delta1]
&  \hat \Delta_1 \one =  \one \otimes \one ,\quad
  \hat \Delta_1  X_i = X_i  \otimes \one + \one \otimes X_i, 
     \\
     & \hat \Delta_1 \Co(\tau)  = \left( \Co \otimes  \Co \right) \hat \Delta_1 \tau,
      \\ &
\hat \Delta_1 \CI^{\Labhom}_k(\tau)  = \left( \CI^{\Labhom}_k \otimes \id
+ \sum_{\ell \in \N^{d+1}}  \frac{X^{\ell}}{\ell !} \Co \otimes  \CI^{\Labhom}_{k+ \ell}
\right) \hat \Delta_1 \tau, 
\end{equs}
where $ \one $ corresponds here to $ (\lbrace\bullet \rbrace,\bullet) $. We make an abuse of notations by identifying $ (T,\rho_T) $ and $ T $ when $ T $ is a tree.

 The previous recursive construction  and the properties of the map $ \Co $ can be explained graphically. We concentrate ourself on the shape and we omit the decorations. If we look at one term $ \tau_1 \otimes \tau_2 $ appearing in the decomposition of $  \hat \Delta_1  \tau $ for some tree $ \tau $ then $ \tau_1 $ is of the form $ \bar \tau  \prod_j \Co(\bar \tau_j)$. We colour trees of the form $  \Co(\bar \tau_j) $ in red and
 leave $ \bar \tau $ in black in the next example: 
     
 \begin{equs}
   (\CI_k^{\Labhom} \otimes \id + \Co \otimes \CI^{\Labhom}_k)   \left(  
   \begin{tikzpicture}[scale=0.2,baseline=0.2cm]
        \node at (0,0)  [dot,label=below:] (root) {};  
     \end{tikzpicture}
     \begin{tikzpicture}[scale=0.2,baseline=0.2cm]
        \node at (0,0)  [dot,red,label=below:] (rootbis) {};
        \node at (2,2)  [dot,red,label=below:] (rightbis) {};
        \node at (0,2)  [dot,red,label=above:] (root) {};
         \node at (0,4)  [dot,red,label=above:] (center) {};
         \node at (-2,4)  [dot,red,label=above:] (left) {};
         \node at (2,4)  [dot,red,label=above:  ] (right) {};
         \draw[kernel1,red] (root) to
     node [sloped,below] {\small }               (rootbis);
            \draw[kernel1,red] (right) to
     node [sloped,below] {\small }     (root); \draw[kernel1,red] (left) to
     node [sloped,below] {\small }     (root);
     \draw[kernel1,red] (center) to
     node [sloped,below] {\small }     (root);
     \draw[kernel1,red] (rightbis) to
     node [sloped,below] {\small }     (rootbis);
     \end{tikzpicture} 
     \begin{tikzpicture}[scale=0.2,baseline=0.2cm]
        \node at (0,0)  [dot,red,label=below:] (root) {};
         \node at (2,2)  [dot,red,label=right:] (right) {};
         \node at (-2,2)  [dot,red,label=left:] (left) {};
         \node at (-3,4)  [dot,red,label=above:  ] (leftl) {};
          \node at (-1,4)  [dot,red,label=above: ] (leftr) {};
          \node at (1,4)  [dot,red,label=above:  ] (rightl) {};
           \node at (3,4)  [dot,red,label=above:  ] (rightr) {};
            \draw[kernel1,red] (right) to
     node [sloped,below] {\small }     (root); \draw[kernel1,red] (left) to
     node [sloped,below] {\small }     (root);
     \draw[kernel1,red] (leftr) to
     node [sloped,below] {\small }     (left);
      \draw[kernel1,red] (rightr) to
     node [sloped,below] {\small }     (right);
      \draw[kernel1,red] (rightl) to
     node [sloped,below] {\small }     (right);
      \draw[kernel1,red] (leftl) to
     node [sloped,below] {\small }     (left);
     \end{tikzpicture} 
       \otimes \quad \tau_2 \right) \quad =  
 \\    
  \begin{tikzpicture}[scale=0.2,baseline=0.2cm]
     \node at (0,0)  [dot,label=below:] (rootbis) {};
        \node at (0,2)  [dot,label=below:] (root) {};
         \draw[kernel1] (root) to node [sloped,below] {\small } (rootbis);  
     \end{tikzpicture}   \begin{tikzpicture}[scale=0.2,baseline=0.2cm]
        \node at (0,0)  [dot,red,label=below:] (rootbis) {};
        \node at (2,2)  [dot,red,label=below:] (rightbis) {};
        \node at (0,2)  [dot,red,label=above:] (root) {};
         \node at (0,4)  [dot,red,label=above:] (center) {};
         \node at (-2,4)  [dot,red,label=above:] (left) {};
         \node at (2,4)  [dot,red,label=above:  ] (right) {};
         \draw[kernel1,red] (root) to
     node [sloped,below] {\small }               (rootbis);
            \draw[kernel1,red] (right) to
     node [sloped,below] {\small }     (root); \draw[kernel1,red] (left) to
     node [sloped,below] {\small }     (root);
     \draw[kernel1,red] (center) to
     node [sloped,below] {\small }     (root);
     \draw[kernel1,red] (rightbis) to
     node [sloped,below] {\small }     (rootbis);
     \end{tikzpicture} 
     \begin{tikzpicture}[scale=0.2,baseline=0.2cm]
        \node at (0,0)  [dot,red,label=below:] (root) {};
         \node at (2,2)  [dot,red,label=right:] (right) {};
         \node at (-2,2)  [dot,red,label=left:] (left) {};
         \node at (-3,4)  [dot,red,label=above:  ] (leftl) {};
          \node at (-1,4)  [dot,red,label=above: ] (leftr) {};
          \node at (1,4)  [dot,red,label=above:  ] (rightl) {};
           \node at (3,4)  [dot,red,label=above:  ] (rightr) {};
            \draw[kernel1,red] (right) to
     node [sloped,below] {\small }     (root); \draw[kernel1,red] (left) to
     node [sloped,below] {\small }     (root);
     \draw[kernel1,red] (leftr) to
     node [sloped,below] {\small }     (left);
      \draw[kernel1,red] (rightr) to
     node [sloped,below] {\small }     (right);
      \draw[kernel1,red] (rightl) to
     node [sloped,below] {\small }     (right);
      \draw[kernel1,red] (leftl) to
     node [sloped,below] {\small }     (left);
     \end{tikzpicture} 
       \otimes \quad \tau_2 \quad + \begin{tikzpicture}[scale=0.2,baseline=0.2cm]
        \node at (0,0)  [dot,label=below:] (root) {};  
     \end{tikzpicture}
      \begin{tikzpicture}[scale=0.2,baseline=0.2cm]
        \node at (0,0)  [dot,red,label=below:] (root) {};  
     \end{tikzpicture}
 \begin{tikzpicture}[scale=0.2,baseline=0.2cm]
        \node at (0,0)  [dot,red,label=below:] (rootbis) {};
        \node at (2,2)  [dot,red,label=below:] (rightbis) {};
        \node at (0,2)  [dot,red,label=above:] (root) {};
         \node at (0,4)  [dot,red,label=above:] (center) {};
         \node at (-2,4)  [dot,red,label=above:] (left) {};
         \node at (2,4)  [dot,red,label=above:  ] (right) {};
         \draw[kernel1,red] (root) to
     node [sloped,below] {\small }               (rootbis);
            \draw[kernel1,red] (right) to
     node [sloped,below] {\small }     (root); \draw[kernel1,red] (left) to
     node [sloped,below] {\small }     (root);
     \draw[kernel1,red] (center) to
     node [sloped,below] {\small }     (root);
     \draw[kernel1,red] (rightbis) to
     node [sloped,below] {\small }     (rootbis);
     \end{tikzpicture} 
     \begin{tikzpicture}[scale=0.2,baseline=0.2cm]
        \node at (0,0)  [dot,red,label=below:] (root) {};
         \node at (2,2)  [dot,red,label=right:] (right) {};
         \node at (-2,2)  [dot,red,label=left:] (left) {};
         \node at (-3,4)  [dot,red,label=above:  ] (leftl) {};
          \node at (-1,4)  [dot,red,label=above: ] (leftr) {};
          \node at (1,4)  [dot,red,label=above:  ] (rightl) {};
           \node at (3,4)  [dot,red,label=above:  ] (rightr) {};
            \draw[kernel1,red] (right) to
     node [sloped,below] {\small }     (root); \draw[kernel1,red] (left) to
     node [sloped,below] {\small }     (root);
     \draw[kernel1,red] (leftr) to
     node [sloped,below] {\small }     (left);
      \draw[kernel1,red] (rightr) to
     node [sloped,below] {\small }     (right);
      \draw[kernel1,red] (rightl) to
     node [sloped,below] {\small }     (right);
      \draw[kernel1,red] (leftl) to
     node [sloped,below] {\small }     (left);
     \end{tikzpicture} 
  \otimes \quad \CI^{\Labhom}_k(\tau_2).  
     \end{equs}

In the next proposition, we prove the equivalence between the recursive definition on the symbols and the definition   \eqref{def:coproduct_root} . 
\begin{proposition}
The definitions  \eqref{def:coproduct_root}  and  \eqref{e::recursive_delta1}  coincide.
\end{proposition}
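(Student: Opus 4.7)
The plan is to proceed by structural induction on symbolic expressions $\tau \in \Tra_{\rho}$, matching the combinatorial definition \eqref{def:coproduct_root} with the recursive one \eqref{e::recursive_delta1}. The base cases $\one$ (a single node $\bullet$ as root with zero decoration) and $X_i$ (a single node root with decoration $e_i$) are immediate from \eqref{def:coproduct_root}: $\Adm(\bullet,\bullet)$ contains only the forest $\bullet$, so the only contributions come from $\Labn_A \in \lbrace 0, e_i\rbrace$, giving $\one \otimes \one$ and $X_i \otimes \one + \one \otimes X_i$ respectively, in agreement with the recursion.

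For products, the multiplicativity of the combinatorial $\hat \Delta_1$ with respect to $\star$ has essentially been established in Proposition~\ref{proposition_coassociativity_delta1} via the natural identification $\Adm(F,\rho) \star \Adm(\bar F, \bar \rho) \simeq \Adm((F,\rho) \star (\bar F,\bar \rho))$ together with the factorisation of the coefficients $\frac{1}{\Labe_A!} \binom{\Labn}{\Labn_A}$ over disjoint supports. Since the recursive definition is also $\star$-multiplicative by construction, it then suffices to verify the two elementary operator steps.

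For $\Co(\tau)$ with $\tau = (F,\Labn,\Labe,\rho)$, the operator $\Co$ appends an extra singleton tree $\bullet$ (with zero decoration) to $F$ and redistinguishes the new $\bullet$ as the marked root. Any $A \in \Adm(\Co(F), \bullet)$ must contain all nodes, hence contains the new singleton trivially; the remaining data are in canonical bijection with $\Adm(F,\rho)$, yielding on both tensor factors an application of $\Co$. This matches $\hat\Delta_1 \Co(\tau) = (\Co \otimes \Co)\hat\Delta_1 \tau$.

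The main case is $\CI^{\Labhom}_k(\tau)$. Here $\tau = (F,\rho)$ and $\CI^{\Labhom}_k(\tau)$ adjoins a fresh root $\rho'$ with zero node decoration, connected to $\rho$ by a new edge $e'$ of type $\Labhom$ with edge decoration $k$; this new tree is distinguished by $\rho'$. An admissible subforest $A$ containing all nodes of $\CI^{\Labhom}_k(F)$ splits into two cases. \textbf{Case (a):} if $e' \in E_A$, then the connected component of $A$ containing $\rho'$ also contains $\rho$, and removing $e'$ restores an element of $\Adm(F,\rho)$; the contracted edge $e'$ survives on the right, producing the term $\CI^{\Labhom}_k \otimes \id$ applied to $\hat \Delta_1 \tau$ by the induction hypothesis. \textbf{Case (b):} if $e' \notin E_A$, then $\rho'$ is an isolated singleton component of $A$, and $e' \in \partial(A, \CI^{\Labhom}_k(F))$ carries a label $\Labe_A(e') = \ell \in \N^{d+1}$. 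Since $\Labn(\rho') = 0$ forces $\Labn_A(\rho') = 0$, the decoration of $\rho'$ in the contracted form is $\pi \Labe_A(\rho') = \ell$, and the factor $\frac{1}{\Labe_A!}$ contributes $\frac{1}{\ell!}$ for this boundary edge; meanwhile $e'$ carries decoration $k+\ell$ on the right. Encoding the isolated $\rho'$-component with $\Co$ gives precisely the summand $\frac{X^\ell}{\ell!} \Co \otimes \CI^{\Labhom}_{k+\ell}$ applied to $\hat \Delta_1 \tau$.

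The main obstacle, and the only place requiring delicate bookkeeping, is Case (b): one must check that the combinatorial factors $\frac{1}{\Labe_A!}\binom{\Labn}{\Labn_A}$ split cleanly into the contribution $\frac{1}{\ell!}$ from the new boundary edge and the remaining coefficient on $F$, so that the leftover sum is exactly $\hat \Delta_1 \tau$ with $\rho'$ and $e'$ stripped. Once this splitting is verified, summation over $\ell$ recombines the two cases into the recursive formula \eqref{e::recursive_delta1}, closing the induction.
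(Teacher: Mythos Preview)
Your proposal is correct and follows essentially the same strategy as the paper's own proof: verify the base cases, use $\star$-multiplicativity to reduce to the elementary operators $\Co$ and $\CI^{\Labhom}_k$, and for the latter split $\Adm(\CI^{\Labhom}_k(F),\rho')$ according to whether the new edge $e'$ lies in $E_A$ or not. The paper carries out exactly this two-case decomposition and resolves Case~(b) by the same observation you flag, namely that $\Labn(\rho')=0$ forces $\Labn_A(\rho')=0$, so the only new combinatorial datum is the boundary label $\ell$ on $e'$, contributing $\frac{X^\ell}{\ell!}\,\Co$ on the left and $\CI^{\Labhom}_{k+\ell}$ on the right.
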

\begin{proof} The fact that $\hat \Delta_1 \one = \one \otimes \one $
follows immediately from the definitions. The element $X^k $ is encoded by the tree 
consisting of just a root, but with label $k$: $ ( \lbrace \bullet^k \rbrace,\bullet)  $. One then has $A = \lbrace \bullet \rbrace$
and $\Labe_A = 0$, while $\Labn_A$ runs over all possible decorations for the root.
This shows that \eqref{def:coproduct_root}  yields in this case
\begin{equ}
\hat \Delta_1 X^k = \sum_{\ell \leq k} \binom{k}{\ell} X^{k-\ell} \otimes X^\ell\;,
\end{equ}
which is as required.
It now remains to verify that the recursive identities hold as well. The coproduct $ \hat \Delta_1 $ is multiplicative for $ \star $. Thus we can restrict ourselves to $\hat \Delta_1 \Co(T^{\Labn}_\Labe) $ and $ \hat \Delta_1 \CI_k^{\Labhom}(T^{\Labn}_\Labe) $ where $ T^{\Labn}_{\Labe} $ is a decorated tree.
For the first, we just notice that:
\begin{equs} 
\hat \Delta_1 (\lbrace T^{\Labn}_{\Labe}  \rbrace \sqcup \lbrace \rho \rbrace,\rho) = \left( (\id,\rho) \otimes (\id,\rho) \right) \hat \Delta_1 (\lbrace T^{\Labn}_{\Labe} \rbrace,\rho_{T})
\end{equs}
where $ \rho $ is different from the root $ \rho_T $ of $ T $  and $ (\id,\rho) $ replaces the root by $ \rho $ and adds $ \rho $ to the forest. The operator $ (\id, \rho) $ is identified with $ \mathcal{C} $.
It remains to consider $\hat \Delta_1 \CI^{\Labhom}_k(T^{\Labn}_\Labe)$.
It follows from the definitions that by denoting $ \rho $ the root of $ \CI^{\Labhom}(T) $  
\begin{equs}
\Adm(\CI^{\Labhom} (T),\rho) =\CI^{\Labhom}( \Adm(T,\rho_T)) \sqcup \lbrace ( A \sqcup \lbrace \rho \rbrace,\rho) : (A,\rho_T) \in \Adm(T,\rho_T) \rbrace.
\end{equs}
This decomposition translates the belonging or not of the
edge $e_{\rho} = \CI^{\Labhom}$ to $ (A,\rho) \in \Adm(\CI^{\Labhom}(T),\rho) $.
Given $ (A,\rho_{T}) \in \Adm(T,\rho_T)$, since the root-decoration of $\CI_k^{\Labhom}(T)$ is $0$, the set of 
all possible node-labels $\Labn_A$ for $\CI_k^{\Labhom}(T)$ appearing in \eqref{def:coproduct_root} for 
$\hat \Delta_1 \CI_k^{\Labhom}(T^{\Labn}_\Labe)$ coincides with those appearing in the expression for 
$\hat \Delta_1 T^{\Labn}_\Labe$. Furthermore, it follows from the definitions that for any such $A$ one has
\begin{equ}
\CR_A \CI^{\Labhom}(T)
=  \CI^{\Labhom}(\CR_A T) \;,
\end{equ}
so that we have the identity
\begin{equs}
\hat \Delta_1 \CI_k^{\Labhom}(T^{\Labn}_\Labe) &= (\CI^{\Labhom}_k \otimes \id) \hat \Delta_1 T^{\Labn}_\Labe
+ \sum_{\Labe_{\rho},\Labn_{\rho}} {1\over \Labe_{\rho}!}\binom{\Labn }{\Labn_{\rho}}
(\lbrace \rho \rbrace,\Labn_{\rho} + \pi \Labe_{\rho},\rho ) \star (\id,\rho) \\
& \qquad \otimes (\lbrace \rho \rbrace, \Labn(\rho) -  \Labn_{\rho},\rho) \star \CI^{\Labhom}_{k+\Labe_{\rho}(e_\rho)} \;  \hat \Delta_1 T_{\Labe}^{\Labn} \\ 
& = (\CI^{\Labhom}_k \otimes \id) \hat \Delta_1 T^{\Labn}_\Labe
+ \left( \sum_{\Labe_{\rho}} {1\over \Labe_{\rho}!} 
X^{ \pi \Labe_{\rho} } \Co \otimes 
  \CI^{\Labhom}_{k+\Labe_{\rho}(e_\rho)} \right) \hat \Delta_1 T^{\Labn}_{\Labe}  \;,
\end{equs}
because $ \Labn(\rho) =0 $ so that
$\Labn_{\rho}$ is a zero ( $\Labn(\rho) -  \Labn_{\rho}  \geq 0 $  ) . 
Note now that $\Labe_{\rho}$  consists of a single decoration 
(say $\ell$), supported on $e_\rho$.  As a consequence, we can rewrite the above as
\begin{equs}
\hat \Delta_1 \CI_k^{\Labhom}(T^{\Labn }_\Labe) = \left(\CI_k^{\Labhom} \otimes \id
+ \sum_{\ell \in \N^{d+1}} {X^{\ell}  \over \ell !}  \Co \otimes  \CI_{k+\ell}^{\Labhom} \right)\hat \Delta_1 T^{\Labn}_\Labe.
\end{equs}
\end{proof}

\begin{remark}
We can derive a recursive formula for $ \hat \Delta_1 $ with the extended decorations introduced in \cite{BHZ}: 
\begin{equs}
&  \hat \Delta_1 \one_{ \Labo} =  \one_{ \Labo} \otimes \one_{ \Labo} ,\quad
  \hat \Delta_1  X_i = X_i  \otimes \one_{e_i} + \one \otimes X_i, 
     \\
     & \hat \Delta_1 \Co(\tau)  = \left( \Co \otimes  \Co \right) \hat \Delta_1 \tau,
      \\ &
\hat \Delta_1 \CI^{\Labhom}_k(\tau)  = \left( \CI^{\Labhom}_k \otimes \one_{\Labhom-k}
+ \sum_{\ell \in \N^{d+1}}  \frac{X^{\ell}}{\ell !} \Co \otimes \one_{\ell}  \CI^{\Labhom}_{k+ \ell}
\right) \hat \Delta_1 \tau, 
\end{equs}
where $ \one_{\Labo} $ is the tree with one node 
having the extended decoration $ \Labo \in \Z^{d+1}
\oplus \Z(\mfL)  $.
\end{remark}

Before making the link between $ \hat \Delta_1 $
and the maps $ \Deltam $, $ \Deltap $, we introduce some notations. We denote by 
$ \mathcal{C}_2  $ the map acting on trees by sending $ T^{\Labn}_{\Labe} $ to $ (T,2)^{\Labn}_{\Labe} $ or equivalently by colouring the root with the colour $ 2 $. Then we define the map $ \Pi_{\Trees} : \Forests_{\rho} \rightarrow \Trees_{\rho} $ by sending  $ (F,\rho) $ to $ (T_{\rho},\rho) $ when $ F = T_{\rho} \cdot \bullet \cdot ... \cdot \bullet $ and zero otherwise. In the next proposition, we also use the map 
$ \tilde{\Pi} : \hat \CT_- \rightarrow \hat \CT_-$ defined for the identity \eqref{e:deltar}. This map allows to remove the isolated nodes.

 \begin{proposition} \label{recursive_coproduct}
    One has on $ \CT $: 
    \begin{equs}
   \Deltam  =  (\Pi_- \circ \tilde \Pi \circ \Pi^{\rho}  \otimes \Pi^{\rho}) \hat \Delta_1 \iota_{\rho}, \quad 
  \Deltap  = (  \Pi^{\rho} \circ \Pi_{\Trees}  \otimes  \Pi_+ \circ \mathcal{C}_2  \circ \Pi^{\rho}) \hat \Delta_1 \iota_{\rho} .
     \end{equs} 
    \end{proposition}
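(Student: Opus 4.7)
The plan is to unfold both sides of each identity using their explicit combinatorial definitions and exhibit a bijection between the indexing data, with the projections on the left tensor factor eliminating exactly the terms of the $\hat\Delta_1$ expansion that are absent from $\Deltam$ or $\Delta_2$. Concretely, applying $\hat\Delta_1$ to $\iota_\rho T^\Labn_\Labe = (T^\Labn_\Labe, \rho_T)$ via \eqref{def:coproduct_root} yields a sum over $(A', \rho_T) \in \Adm(T, \rho_T)$, that is, over subgraphs $A' \subseteq T$ with $N_{A'} = N_T$, together with decorations $\Labn_{A'}\colon N_T \to \N^{d+1}$ and $\Labe_{A'}\colon E_T \setminus E_{A'} \to \N^{d+1}$. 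Each such $A'$ splits canonically as $A' = A \sqcup I$, where $A$ is the union of those connected components of $A'$ that contain at least one edge and $I$ is the set of isolated vertices.

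For the first identity, I would show that the assignment $A \in \Adm(T) \mapsto A \sqcup (N_T \setminus N_A)$ is a bijection onto the admissible subgraphs appearing in $\hat\Delta_1 \iota_\rho T$, and that a pair $(\Labn_A, \Labe_A)$ extends uniquely to a pair $(\Labn_{A'}, \Labe_{A'})$ vanishing on $I$ and on the edges of $E_T \setminus E_A$ not adjacent to $N_A$. The extra freedom in the $\hat\Delta_1$ sum, namely nontrivial $\Labn_{A'}$ on $I$ or nontrivial $\Labe_{A'}$ on edges touching $I$, is annihilated by $\Pi_- \circ \tilde\Pi$: an isolated vertex $v\in I$ whose total label $\Labn_{A'}(v) + \pi\Labe_{A'}(v)$ is nonzero produces a factor $\bullet^m$ with $m \neq 0$ lying in $\CJ_+$, hence killed by $\Pi_-$; whereas one whose total label vanishes contributes an undecorated $\bullet$, removed by $\tilde\Pi$. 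For the surviving terms one checks $\CR_{A'}T = \CR_A T$ (contraction of isolated components is trivial), $\binom{\Labn}{\Labn_{A'}} = \binom{\Labn}{\Labn_A}$ and $\Labe_{A'}! = \Labe_A!$ (the extra entries being zero), and that $[\Labn - \Labn_{A'}]_{A'}$, $\Labe + \Labe_{A'}$ reduce to $[\Labn - \Labn_A]_A$, $\Labe + \Labe_A$ after $\Pi^\rho$. Summing recovers \eqref{co-action minus}.

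For the second identity, using $\Deltap = (\id \otimes \Pi_+)\Delta_2$ it suffices to prove $(\Pi^\rho \circ \Pi_\Trees \otimes \mathcal{C}_2 \circ \Pi^\rho)\hat\Delta_1 \iota_\rho = \Delta_2$. The projector $\Pi_\Trees$ retains only forests of the shape $T_{\rho_T}\cdot \bullet \cdots \bullet$, which forces $A'$ to consist of exactly one non-trivial subtree containing $\rho_T$ together with isolated undecorated nodes; these surviving $A'$ are precisely the extensions of subtrees $A\in\Adm^+(T)$ rooted at $\rho_T$ entering \eqref{co-action}. The same matching of combinatorial weights as above, combined with the root recolouring by $\mathcal{C}_2$ on the right factor, delivers \eqref{co-action} for $\Delta_2$.

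The main obstacle is the bookkeeping of isolated vertices: one has to verify that the three projections kill exactly the portion of the $\hat\Delta_1$ expansion which does not correspond to admissible data for $\Deltam$ or $\Delta_2$, while preserving multiplicities on the matching terms; once the bijection and the zero-extension conventions for $\Labn_{A'}$ and $\Labe_{A'}$ are in place, the matching of binomials, factorials, and decorations is routine.
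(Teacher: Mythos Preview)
Your proposal is correct and follows the same approach as the paper: both argue by comparing the explicit combinatorial expressions \eqref{co-action minus}, \eqref{co-action}, \eqref{def:coproduct_root} and exhibiting the bijections $A \mapsto A \sqcup (N_T\setminus N_A)$ between $\Adm(T)$ (resp.\ $\Adm^+(T)$) and the relevant subsets of $\Adm(T,\rho_T)$. The paper's proof is a two-sentence sketch of these bijections, whereas you spell out the bookkeeping --- in particular the role of $\tilde\Pi$ versus $\Pi_-$ in handling isolated vertices with zero versus nonzero total label, and the matching of binomials and factorials under the zero-extension of $\Labn_{A'},\Labe_{A'}$ --- which is exactly the content the paper leaves implicit.
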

    \begin{proof}
    The proof follows from the expressions of 
    $ \Deltam$, $ \Deltap $ and $ \hat \Delta_1 $ respectively given in \eqref{co-action minus}, \eqref{co-action} and \eqref{def:coproduct_root}. Indeed, we have a bijection between elements of $ \Adm(F,\rho) $ which are of the form
    $ (T_{\rho} \cdot \bullet \cdot ... \cdot \bullet, \rho) $ and $ \Adm^+(T) $. We also have a bijection between the $ A\in  \Adm(T) $ which don't have any isolated nodes and $  \Adm(F,\rho) $.
    These bijections come from the fact that all the nodes
    are extracted in the definition of $ \Adm(F,\rho) $.
    \end{proof}

\section{Examples of renormalised models}
\label{section 5}
For the examples of this section, we define the renormalisation map $ M_{R} $ by using an admissible map $  R_{\ell} = (\ell \otimes \id) \Deltam_r $ with $ \ell \in \CG_- $.
Moreover for each example, we describe the structure and we look at the following properties which a model could verify or not:
\begin{enumerate}[label=($\alph*$)]
\item \label{prop1} The map $ M $ commutes with $ G $ .
\item \label{prop2} For every symbol $ \tau $, $ \Pi_x^M \tau = \Pi_x M \tau $.
\item \label{prop3}For every symbol $ \tau $, $ (\Pi_x^M \tau ) (x) = (\Pi_x M \tau) (x) $.
\end{enumerate}

\begin{remark}
In this section, we will give examples which do not verify the first two properties. But the last one is verified by all the examples. In the framework of the extended structure, we directly have the second property see \cite[Thm 6.15]{BHZ}.
\end{remark}

We start with a toy model on the Wick renormalisation then we move on to examples in singular SPDEs.

\subsection{Hermite polynomials}
\label{hermite_polynomials}

We look at a very simple example: the powers of a standard Gaussian random variable $ \xi $ with zero mean and covariance $ c^2$, which can be interpreted as a white noise on a singleton $\{x\}$. The space $ \mathcal{T} $ is given as the linear span of $ \lbrace \Xi^n: n \in \N \rbrace $ and $G = \lbrace  \id \rbrace $ . 
 Given the natural definition
 \[
 \PPi \, \Xi^n=\xi^n,
 \]
 we want to find $M$ such that the renormalised $n$-th power of $ \xi $ is the Wick product: 
 \[
 \PPi^M \Xi^n = \xi^{\diamond n} = H_n(\xi,c)
 \]
where $H_n$ are generalised Hermite polynomials: $H_0=1$, $ H_{n+1}(x,c) = x H_{n}(x,c) -c^2 H'_{n}(x,c) $.
One natural way of defining $ M $ is $ M = \exp(-R_{\ell}) $ where $ R_{\ell} = (\ell \otimes \id) \Deltam_r $ and $ \ell(\Xi^n)=c^2 \un{(n=2)} $. The map $ \Deltam_r $ is defined on $ \Xi^n $   by: 
\[
 \Deltam_r \Xi^n = \sum_{k = 0}^n \binom{n}{k}
 \Xi^k \otimes \Xi^{n-k}.
\]
This map can be expressed in our general setting with the same subset $ \Adm^r(T) $. But in that case, we do not have any decorations. In the next example, we will encode $ \Xi^n $ with a set of $ n  $ leaves. A rooted subtree is identified with a subset of leaves. We present one term in the decomposition of $ \Deltam_r \Xi^8 $ which is in the support of $ \ell $:
\begin{equs}
\begin{tikzpicture}[scale=0.35,baseline=0.2cm]
       \node at (1,-2)  [dot,label=above:$  $] (root) {};
        \node at (0,0)  [dot,label=above:$ \ell_4 $] (root1) {};
        \node at (-2,0)  [dot,label=above:$ \ell_3 $] (root2) {};
        \node at (-4,0)  [dot,label=above:$ \ell_2 $] (root3) {};
        \node at (-6,0)  [dot,label=above:$ \ell_1 $] (root4) {};
        \node at (2,0)  [dot,label=above:$ \ell_5 $] (root5) {};
        \node at (4,0)  [dot,label=above:$ \ell_6 $] (root6) {};
        \node at (6,0)  [dot,label=above:$ \ell_7 $] (root7) {};
         \node at (8,0)  [dot,label=above:$ \ell_8 $] (root8) {};
   \draw[kernel1] (root1) to
     node [sloped,below] {\small }     (root);      
     \draw[kernel1] (root2) to
     node [sloped,below] {\small }     (root);      
     \draw[kernel1] (root3) to
     node [sloped,below] {\small }     (root);      
     \draw[kernel1] (root4) to
     node [sloped,below] {\small }     (root);      
      \draw[kernel1] (root5) to
     node [sloped,below] {\small }     (root);      
     \draw[kernel1] (root6) to
     node [sloped,below] {\small }     (root);      
     \draw[kernel1] (root7) to
     node [sloped,below] {\small }     (root);      
     \draw[kernel1] (root8) to
     node [sloped,below] {\small }     (root);      
     \end{tikzpicture} 
   \longrightarrow    \begin{tikzpicture}[scale=0.35,baseline=0.2cm]
    \node at (1,-2)  [dot,label=above:$  $] (root) {};
    \node at (0,0)  [dot,red,label=above:$ \ell_3 $] (root1) {};
   \node at (2,0)  [dot,red,label=above:$ \ell_7 $] (root2) {};      
    \draw[kernel1] (root1) to
     node [sloped,below] {\small }     (root);      
     \draw[kernel1] (root2) to
     node [sloped,below] {\small }     (root);      
     \end{tikzpicture} 
   \otimes 
     \begin{tikzpicture}[scale=0.35,baseline=0.2cm]
     \node at (1,-2)  [dot,label=above:$  $] (root) {};
        \node at (0,0)  [dot,label=above:$ \ell_4 $] (root1) {};
        \node at (-2,0)  [dot,label=above:$ \ell_2 $] (root2) {};
        \node at (-4,0)  [dot,label=above:$ \ell_1 $] (root3) {};
        \node at (2,0)  [dot,label=above:$ \ell_5 $] (root4) {};
        \node at (4,0)  [dot,label=above:$ \ell_6 $] (root5) {};
         \node at (6,0)  [dot,label=above:$ \ell_8 $] (root6) {};
          \draw[kernel1] (root1) to
     node [sloped,below] {\small }     (root);      
     \draw[kernel1] (root2) to
     node [sloped,below] {\small }     (root);      
     \draw[kernel1] (root3) to
     node [sloped,below] {\small }     (root);      
     \draw[kernel1] (root4) to
     node [sloped,below] {\small }     (root);      
      \draw[kernel1] (root5) to
     node [sloped,below] {\small }     (root);      
     \draw[kernel1] (root6) to
     node [sloped,below] {\small }     (root);      
     \end{tikzpicture}. 
     \end{equs}
     We have removed the set $ \lbrace \ell_{3}, \ell_{7} \rbrace $ from the term $ \Xi^8 $.
  Then $ \PPi^M $ is given by: 
\[
 \PPi^M \Xi^n = \PPi M \Xi^n.
\]

By definition,  this example verifies all the three properties \ref{prop1}, \ref{prop2} and \ref{prop3}.
 We are able to provide a description of $ M $: 
 \begin{proposition}
 The map $ M $ is given by: 
\begin{equs}
M = M_{\ell_{wick}} =  ( \ell_{wick} \otimes \id ) \Deltam_r
\end{equs}
where
\begin{equs}
\ell_{wick} = \one^{*} + \sum_{k \geq 1} (-1)^{k}   
\frac{(2k-1)!}{2^{k-1} (k-1)!}  
   c^{2k} \mathds{1}_{\lbrace \Xi^{2k} \rbrace}.
\end{equs} 
 \end{proposition}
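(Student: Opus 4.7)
The plan is to compute $M\Xi^n$ explicitly from the definition $M=\exp(-R_\ell)$ and to match it term-by-term with $M_{\ell_{wick}}\Xi^n = (\ell_{wick}\otimes\id)\Deltam_r\Xi^n$. Since $R_\ell$ strictly decreases the number of noises, it is nilpotent on each subspace $\mathrm{span}\{\Xi^m:m\le n\}$, so the exponential is well-defined and both sides reduce to a finite linear combination of $\Xi^{n-k}$'s.

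I would first establish by induction on $j$ the closed form
\begin{equs}
R_\ell^j\Xi^n = \frac{n!}{2^j(n-2j)!}\,c^{2j}\,\Xi^{n-2j}\qquad(0\le 2j\le n)
\end{equs}
(and $R_\ell^j\Xi^n=0$ otherwise). The base case $j=0$ is trivial; the inductive step uses
$R_\ell\Xi^m = \sum_{k=0}^m \binom{m}{k}\ell(\Xi^k)\Xi^{m-k}= \binom{m}{2}c^2\Xi^{m-2}$, which comes directly from the definition of $\Deltam_r$ on $\Xi^m$ and from the support $\ell(\Xi^k)=c^2\mathbf{1}_{k=2}$. Summing the exponential series and reorganising the factorials then yields
\begin{equs}
M\Xi^n \;=\; \sum_{j=0}^{\lfloor n/2\rfloor}\frac{(-1)^j}{j!}R_\ell^j\Xi^n
\;=\; \sum_{j=0}^{\lfloor n/2\rfloor}\binom{n}{2j}\,\frac{(-1)^j(2j)!}{2^j j!}\,c^{2j}\,\Xi^{n-2j}\;.
\end{equs}

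On the other side, $M_{\ell_{wick}}\Xi^n = \sum_{k=0}^n\binom{n}{k}\ell_{wick}(\Xi^k)\Xi^{n-k}$. Since $\ell_{wick}$ vanishes on $\Xi^m$ for odd $m$ and equals $(-1)^j\frac{(2j-1)!}{2^{j-1}(j-1)!}c^{2j}$ on $\Xi^{2j}$ for $j\ge 1$, only even indices $k=2j$ contribute. Matching coefficients of $\Xi^{n-2j}$ thus reduces to the elementary identity
\begin{equs}
\frac{(2j)!}{2^j\,j!}\;=\;\frac{(2j-1)!}{2^{j-1}(j-1)!}\;,
\end{equs}
valid for $j\ge 1$ (and the $j=0$ term matches because $\ell_{wick}(\one)=1$). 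This combined with the previous display gives $M\Xi^n = M_{\ell_{wick}}\Xi^n$ for every $n$, and hence $M=M_{\ell_{wick}}$ by linearity.

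There is no serious obstacle: the argument is essentially a direct computation, and the core content is the classical combinatorial identification of the Wick power $H_n(\xi,c)$ with the convolution exponential of the single-contraction character $\ell$. The only thing worth emphasising is that $\Deltam_r$ is coassociative on this toy model, so the composition law $R_{\ell_1}R_{\ell_2}=R_{\ell_1\ast\ell_2}$ holds and $\ell_{wick}=\exp_\ast(-\ell)$ provides a conceptual (character-level) derivation parallel to the coefficient-matching above.
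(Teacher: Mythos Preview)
Your proof is correct and follows essentially the same route as the paper. The paper packages the induction as the operator identity $\frac{R_\ell^{\,k}}{k!}=(f_k\otimes\id)\Deltam_r$ with $f_k=\frac{(2k-1)!}{2^{k-1}(k-1)!}c^{2k}\mathds{1}_{\{\Xi^{2k}\}}$ and then reads off $\ell_{wick}=\one^*+\sum_{k\ge1}(-1)^kf_k$, whereas you compute $R_\ell^{\,j}\Xi^n$ explicitly and match coefficients with $M_{\ell_{wick}}\Xi^n$; the underlying computation and the key combinatorial identity $\frac{(2j)!}{2^j j!}=\frac{(2j-1)!}{2^{j-1}(j-1)!}$ are the same in both.
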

 \begin{proof}
  We use the following lemma:
  \begin{lemma}
  \label{hermite}
  For every $ k \in \N^* $ 
  $
   \frac{R^k}{k!} = ( f_k \otimes \id) \Deltam_r  
  $, where 
  \[
   f_k = \frac{(2k-1)!}{2^{k-1} (k-1)!}  
   c^{2k} \mathds{1}_{\lbrace \Xi^{2k} \rbrace}  .
  \]
  \end{lemma}
  \begin{proof}
  We proceed by recurrence. It is obvious for $ k =1 $. Let $ k \in \N^* $, we suppose the property true for this integer.  We have for every $ n \in \N $ 
  \[
   \begin{aligned}
   \frac{R^{k+1}}{(k+1)!} \Xi^n & = 
   \frac{1}{k+1} R (f_k \otimes \id) \Deltam_r \Xi^n \\
   & = \frac{1}{k+1}  (f_k \otimes \ell \otimes \id) (\id \otimes \Deltam_r) \sum_{m=0}^n \binom{n}{m} \Xi^{m} \otimes \Xi^{n-m} \\
   & = \frac{1}{k+1}  (f_k \otimes \ell \otimes \id)  \sum_{m=0}^n \sum_{\ell = 0}^{n-m} \binom{n}{m} \binom{n-m}{\ell} \Xi^{m} \otimes \Xi^{l} \otimes \Xi^{n-m-\ell} \\
   & = \mathds{1}_{\lbrace n \geq 2k + 2 \rbrace} \frac{1}{k+1}   \frac{(2k-1)!}{2^{k-1} (k-1)!}  
   c^{2k+2} \binom{n}{2k} \binom{n-2k}{2}  \\
   & = \mathds{1}_{\lbrace n \geq 2k + 2 \rbrace} \frac{(2k+1)!}{2^{k} k!}  
   c^{2k+2}. 
   \end{aligned}
  \]
  \end{proof}
  From the previous lemma, we deduce that $ \ell_{wick} = \one^{*} +  \sum_{k \geq 1} (-1)^k f_k $ which concludes the proof.
 \end{proof}

\begin{remark}
In this particular case, we look at a recursive formulation of $ M = M^{\circ} R $, $ R $ turns out to be equal to $ M $ because $ M^{\circ} $ is the identity on $ \CT $. This is also the reason why 
$ \ell_{wick} $ has a complicate expression in comparison to $ \ell $.
\end{remark}

\subsection{The KPZ equation }
The KPZ equation is given on $ \R $ by
\begin{equs}
\partial_t u = \partial_x^2 u + ( \partial_x u )^2 + \xi
\end{equs}
where $ \xi $ is the space-time white noise. In this equation, there is only one noise denoted by $ \Xi $ in the symbolic notation. Moreover, we denote by $ \CI $ the abstract integrator associated to the heat kernel $ K $. The scaling $ \s $ is the parabolic scaling $ (2,1) $. In the sequel, we make the following abuse of notation $ \CI_{(0,1)} = \CI_1 $.
The renormalisation group for the KPZ equation has been introduced in \cite{KPZ} and it has been given in the setting of regularity structure in \cite{reg}.
We first present the normal and complete rule $ \mathcal{R}_{kpz} $ used for building $ \mathcal{T}_{kpz} $ following \cite[Sec. 5.4]{BHZ}: 
\begin{equs}
 \mathcal{R}_{kpz}(\Xi) = \lbrace () \rbrace, \quad   \mathcal{R}_{kpz}(\CI) = \lbrace (),(\Xi), (\CI_1), (\CI_1,\CI_1) \rbrace.
\end{equs} 
The admissible map $ R = R_{kpz} $ associated to the map $ M $ is given by $ R_{kpz} = ( \ell_{kpz} \otimes \id) \Deltam_r $ where $ \ell_{kpz} $ is non zero for: 
\begin{equs}     
 \tau \in \lbrace \CI_1( \Xi)^2,  \mathcal{I}_{1}(\mathcal{I}_{1}(\Xi)^2)^2, \mathcal{I}_{1}(\Xi)
 \mathcal{I}_{1}( \mathcal{I}_{1}(\Xi) \mathcal{I}_{1}( \mathcal{I}_{1}(\Xi)^2)) \rbrace.
\end{equs}

For the next Proposition~\ref{property KPZ}, we need to work with a different rule:
\begin{equs}
\bar{\mathcal{R}}_{kpz}(\Xi) = \lbrace () \rbrace, \quad \bar{\mathcal{R}}_{kpz}(\CI) = \lbrace (\Xi), (\CI_1), (\CI_1,\CI_1) \rbrace.
\end{equs}
We consider a new space $ \bar \CT $ generated by this rule such that the symbol $ \CI(\tau) $ is zero whenever $ \tau $ is a polynomial.
This space was used in $ \cite{reg} $.
In practice for the expansion of the solution, we work with a truncated version of it by considering only the symbols below a certain degree. It is the same for $ \bar \CT_{+} $. Then only a finite number of polynomials appears when we apply the structure group on $ \bar \CT $.
We make the same assumption as in \cite{reg} that the kernel $ K $ associated to $ \CI $ annihilates polynomials up to a certain order bigger than the order of the truncation.

\begin{proposition} \label{property KPZ}
The map $ M = M_{R_{kpz}} $ satisfies the properties \ref{prop1}, \ref{prop2} and \ref{prop3}.
\end{proposition}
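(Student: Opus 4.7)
The plan is to verify the three properties in sequence, leveraging the commutation property of $R_{kpz}$ together with the fact that $\ell_{kpz}$ is supported on only three symbols $\tau_1 = \CI_1(\Xi)^2$, $\tau_2 = \CI_1(\CI_1(\Xi)^2)^2$, and $\tau_3 = \CI_1(\Xi)\,\CI_1(\CI_1(\Xi)\CI_1(\CI_1(\Xi)^2))$.

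For property~\ref{prop1}, I would start from Proposition~\ref{commutation_R}, which already gives that $R_{kpz} = (\ell_{kpz} \otimes \id)\Deltam_r$ commutes with every $\Gamma_g \in G$. To upgrade this to $M = M^{\circ} R_{kpz}$, I proceed by induction along $<_{\CT}$, which reduces the question to checking that $M^{\circ}$ commutes with $G$ on the image of $R_{kpz}$. The crucial feature is that each of $\tau_1, \tau_2, \tau_3$ carries zero node-decoration at the root and only edges of type $\CI_1$ incident to it. Therefore, when $\Gamma_g$ acts on a tree containing some $\tau_i$ as a root-block, the Taylor terms $\sum_{\ell}\frac{X^{\ell}}{\ell!}g(\CJ^{\Labhom}_{1+\ell}(\cdot))$ produced by the recursive formula for $\Gamma_g$ only modify subtrees strictly below the $\tau_i$-block, and the resulting tree is never again in the support of $\ell_{kpz}$. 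A finite case-by-case check on $\tau_1, \tau_2, \tau_3$ closes the inductive step.

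For property~\ref{prop2}, I would proceed by strong induction on $<_{\CT}$, establishing simultaneously the two identities $\Pi_x^M \tau = \Pi_x M\tau$ and $\Pi_x^{M^{\circ}} \tau = \Pi_x M^{\circ}\tau$. For the planted case $\tau = \CI^{\Labhom}_k(\tau')$, note that $R_{kpz}\tau = \tau$ since no planted tree lies in the support of $\ell_{kpz}$, so $\Pi_x^M \tau = \Pi_x^{M^{\circ}}\tau$; the inductive hypothesis on $\tau'$ then lets me replace $f_x^M(\CJ^{\Labhom}_{k+\ell}(\tau'))$ by $f_x(\CJ^{\Labhom}_{k+\ell}(M\tau'))$ inside the defining recursion. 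For an elementary product $\tau = \prod_i \tau_i$, the multiplicativity of $\Pi_x^{M^{\circ}}$ and of $\Pi_x \circ M^{\circ}$, together with the observation that $R_{kpz}\tau - \tau$ is a linear combination of trees strictly smaller than $\tau$ under $<_{\CT}$, gives the induction step.

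Property~\ref{prop3} is then an immediate corollary of \ref{prop2} by evaluation at $y = x$. The main obstacle is step one: although $R_{kpz}$ commutes with $G$ for general reasons, showing that $M$ itself does so relies on the explicit finite check on the three symbols $\tau_1, \tau_2, \tau_3$, which is precisely where the KPZ-specific structure is used. Once \ref{prop1} is in place, \ref{prop2} and \ref{prop3} follow essentially automatically from the general machinery developed in Section~\ref{section 3}.
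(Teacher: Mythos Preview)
Your proposal has a genuine gap at the planted-tree step of the induction, for both properties \ref{prop1} and \ref{prop2}. The paper's proof hinges on a lemma you never state: for every $\tau \in \bar\CT_{kpz}$ one has $M^{\circ}\tau=\tau$ and $M\tau=\tau+P_\tau(X)$ for some polynomial $P_\tau$. This is what distinguishes KPZ from the generalised KPZ example in Section~\ref{generalised_KPZ_ex}, where the same properties fail. The lemma is proved by observing that, under the KPZ rule, any tree admitting one of the three elements of $\operatorname{supp}\ell_{kpz}$ as a root-subtree has only monomials $X^k$ as complementary factors; hence $R_{kpz}\tau-\tau$ is already a polynomial, and induction propagates this. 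Equally essential is the move to the modified space $\bar\CT$ with $\CI_k(X^m)=0$ (equivalently, $K$ annihilates polynomials), which you do not mention at all.

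Concretely, look at what happens at $\tau=\CI_k(\tau')$. For \ref{prop1}, comparing $M^{\circ}\Gamma_g\CI_k(\tau')$ with $\Gamma_g M^{\circ}\CI_k(\tau')$ produces the discrepancy $\sum_\ell \tfrac{X^\ell}{\ell!}\bigl(g(\CJ_{k+\ell}(\tau'))-g(\CJ_{k+\ell}(M\tau'))\bigr)$, which vanishes only because $M\tau'-\tau'$ is a polynomial and $\CJ_{k+\ell}$ kills polynomials in $\bar\CT_+$. For \ref{prop2}, the Taylor subtractions in $\Pi_x^{M}\CI_k(\tau')$ run over $|\ell|_{\s}<|\CI_k(\tau')|_{\s}$, whereas those in $\Pi_x\CI_k(M\tau')$ run over the (generically larger) range determined by the components of $M\tau'$; the extra terms disappear only because they involve $\int D^{k+\ell}K(x-z)(z-x)^m\,dz=0$. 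Your ``finite case-by-case check on $\tau_1,\tau_2,\tau_3$'' and the feature you isolate (zero root-decoration, only $\CI_1$-edges at the root) do not address this obstruction: the same feature holds for generalised KPZ, where \ref{prop1} and \ref{prop2} are false.
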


\begin{proof}
For proving the first two properties \ref{prop1} and \ref{prop2}, we need the following lemma: 
\begin{lemma}
For every symbol $ \tau $, $ M^{\circ} \tau = \tau $ and there exists a polynomial $ P_{\tau} $ such that: $ M \tau = \tau + P_{\tau}(X)$.
\end{lemma}

\begin{proof}
We proceed by induction.  It is obvious for $ X_i $ and $ \Xi $. For $ \tau = \mathcal{I}_1(\tau') $, we apply the induction hypothesis on  $ \tau'  $ which gives
\[
 M^{\circ} \CI_1(\tau') = \CI_1(M \tau')= \CI_1(\tau' + P_{\tau'}(X)) = \CI_1(\tau').
\] 
Let $ \tau = \prod_i \tau_i $ a product of elementary symbols. From the induction hypothesis on the $ \tau_i $, it follows 
\begin{equs}
M \prod_i \tau_i = M^{\circ} (R \tau - \tau) + \prod_i M^{\circ} \tau_i 
 = M^{\circ} (R \tau - \tau) + \tau. 
\end{equs}
Then $ R \tau - \tau $ is non zero if one element in the support of $ \ell_{kpz} $ is a subtree of $ \tau $. Necessarily, $ \tau $ should be of the form
\begin{itemize}
\item $ \tau_1\CI_1(\Xi \tau_2)\CI_1(\Xi \tau_3)    $,
\item 
$ \tau_1\mathcal{I}_{1}(\Xi \tau_2)
 \mathcal{I}_{1}(\tau_3 \mathcal{I}_{1}(\Xi \tau_4) \mathcal{I}_{1}( \tau_5 \mathcal{I}_{1}(\tau_6 \Xi)\mathcal{I}_{1}(\tau_7 \Xi))) $,
 \item $ \tau_1 \mathcal{I}_{1}( \tau_2 \mathcal{I}_{1}( \tau_3 \Xi)\mathcal{I}_{1}( \tau_4 \Xi)) \mathcal{I}_{1}( \tau_5 \mathcal{I}_{1}(\tau_6 \Xi)\mathcal{I}_{1}(\tau_7 \Xi)) $,
 \end{itemize} where the $ \tau_i  $ belong to $ \mathcal{T}_{kpz} $. By looking, at the rule available in $ \mathcal{R}_{kpz} $, we deduce that the $ \tau_i $ should be monomials of the form $ X^k $. Therefore, $ R \tau - \tau $ is a polynomial which allows us to conclude.
\end{proof}
For the property \ref{prop1}, we proceed by induction and we also prove that $ {M^{\circ}}  $ commutes with $ G $ . Let $ \Gamma \in G $, the proof is obvious for $ X_i $ and $ \Xi $. 
 Let $ \tau = \CI_1(\tau') $, it happens
\begin{equs}
M \Gamma \CI_1(\tau')  & =
  M \left( \Gamma \CI_1(\tau') -  \CI_1( \Gamma \tau') + \CI_1( \Gamma \tau') \right) 
  \\ & = \Gamma \CI_1(\tau') -  \CI_1( \Gamma \tau') + \CI_1(\Gamma M \tau').
\end{equs} 
On the other hand, we have
\[
\begin{aligned}
\Gamma M  \CI_1(\tau') & =
  \Gamma \CI_1(M \tau') -  \CI_1( \Gamma M  \tau') + \CI_1( \Gamma  M \tau') .
\end{aligned}
\] 
Using the previous lemma, it follows $  \CI_1(M \tau') =  \CI_1( \tau') $ and $  \CI_1( \Gamma M \tau' ) =  \CI_1(\Gamma \tau') $ which give the result. The same proof works for $ M^{\circ} $.

Let $ \tau = \prod_i \tau_i $ a product of elementary symbols, we have
\begin{equs}
 M \Gamma \tau & = 
  M^{\circ} R \Gamma \tau = M^{\circ}  \Gamma R \tau = M^{\circ}  \Gamma (R \tau-\tau) + \prod_i M^{\circ}  \Gamma \tau_i  \\
 & =   \Gamma M^{\circ} (R \tau-\tau) + \prod_i  \Gamma M^{\circ} \tau_i = \Gamma M \tau.
\end{equs} 
where we have used the fact that $ M  $ commutes with $ \Gamma $ on the $ \tau_i $ and
$ R \tau $ which comes from the induction hypothesis and the fact that $ R $ commutes with $ G $ .
For the property \ref{prop2}, we proceed as the same by induction. The only difficult point is for $ \tau = \CI_1(\tau') $. We have by applying the induction hypothesis, the previous lemma on $ \tau' $ and using the fact that $ K $ integrates to zero against polynomials
\begin{equs}
 (\Pi_{x}^{M} \CI_{1} \tau')(y)  & = 
  \int D^{(0,1)} K(y-z) (\Pi^{M}_{x}
  \tau')(z) dz \\ & - \sum_{\ell} 
  \frac{(y-x)^{\ell}}{\ell!} \mathds{1}_{| \tau' |_{\s} + 1 -\ell > 0} \int D^{(0,1)+\ell} K(x-z) (\Pi^{M}_{x}
  \tau')(z) dz \\
  &   = 
  \int D^{(0,1)} K(y-z) (\Pi_{x}
  \tau')(z) dz \\ & - \sum_{\ell} 
  \frac{(y-x)^{\ell}}{\ell!} \mathds{1}_{| \tau' |_{\s} + 1 -\ell > 0} \int D^{(0,1)+\ell} K(x-z) (\Pi_{x}
  \tau')(z) dz.
  \end{equs}
which allow us to conclude.
\end{proof}

\subsection{The generalised KPZ}
\label{generalised_KPZ_ex}
The equation contains the previous equation and it is given by: 
\[
  \partial_{t} u = \partial_{x}^{2} u + g(u) \left( \partial_{x} u \right)^{2} + h(u) \partial_{x} u + k(u) +
  f(u) \xi.
\]
 The rule $ \mathcal{R}_{gkpz} $ for building $ \mathcal{T}_{gkpz} $ is given by: 
\begin{equs}
 \mathcal{R}_{gkpz}(\Xi) & = 
 \lbrace () \rbrace, \\ \mathcal{R}_{gkpz}(\CI) & =  \lbrace (),
 ([\CJ]_{\ell}), ( [\CJ]_{\ell}, \CJ_1 ), ( [\CJ]_{\ell}, \CJ_1, \CJ_1 ), ([\CJ]_{\ell}, \Xi),
 \ell \in \N \rbrace,
\end{equs}
where  $ [\CJ]_{\ell} $ is a shorthand notation for $ \CJ,...,\CJ $ where $ \CJ $  is repeated $ \ell $ times. The admissible map 
$ R_{gkpz} $ associated to the generalised KPZ is defined by $ R_{gkpz} = ( \ell_{gkpz} \otimes \id) \Deltam_r $ where $ \ell_{gkpz} $ is non zero on trees with negative degree which do not contain any $ X $.
\begin{proposition} The map $ M =  M_{gkpz} $ satisfies only the property \ref{prop3}.
\end{proposition}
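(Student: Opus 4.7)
The strategy is to treat the three claims separately: establish (c), and then exhibit explicit counterexamples that rule out (a) and (b).

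For (c), the plan is to invoke Proposition~\ref{prop_PiM}, which reduces the claim $(\Pi_x^M\tau)(x)=(\Pi_xM\tau)(x)$ to the single case $\tau=\CI_k(\tau')$ with $|\tau|_{\s}<0$. For such $\tau$ no positive-degree Taylor subtraction is active at the base point, so the two sides become respectively $\int D^kK(x-z)(\PPi^MF_x^M\tau')(z)\,dz$ and $\int D^kK(x-z)(\PPi F_x M\tau')(z)\,dz$. Using the identity $\PPi^M=\PPi M$ established earlier, the proof of (c) boils down to showing that these two convolutions agree. I would carry this out by induction on the partial order $<_{\mathcal{T}}$, exploiting the key support property of $\ell_{gkpz}$: by construction it vanishes on every tree carrying a non-trivial $X$-decoration. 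This makes the polynomial contributions produced by $F_x^M$ inside $\tau'$ invisible to every extraction performed by $R_{gkpz}$, and the argument then mirrors the KPZ case of Proposition~\ref{property KPZ}.

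To rule out (a) I would construct an explicit counterexample. In the generalised KPZ setting the rule $\mathcal{R}_{gkpz}(\CI)$ admits arbitrarily many $\CJ$-children under each $\CI$-edge, so in contrast with the KPZ case the pleasant lemma ``$M^{\circ}\tau=\tau$ and $M\tau=\tau+P_\tau(X)$'' no longer holds: there exist trees $\tau$ for which $M\tau-\tau$ contains genuine planted factors $\CI_\ell(\sigma)$ rather than pure polynomials in $X$. A natural candidate is a tree containing both a sub-product on which $\ell_{gkpz}$ is active and an additional planted factor of the form $\CI_1(\Xi)$ that survives the extraction. For such a $\tau$ a non-trivial $\Gamma\in G$ creates polynomial corrections inside the surviving planted subtree via the rule $\Gamma\CI_k(\sigma)=\CI_k(\Gamma\sigma)+P$, and those corrections are then non-trivially seen by $R_{gkpz}$, producing a discrepancy between $M\Gamma\tau$ and $\Gamma M\tau$.

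For (b), a similar family of trees yields a counterexample: the extraction produced by $R_{gkpz}$ on a subfactor sitting inside a larger tree forces $\Pi_x^M\tau$ and $\Pi_xM\tau$ to disagree at points $y\neq x$, precisely because the non-commutation of $M$ and $\Gamma$ manifests as a non-trivial polynomial function of $y-x$. The main obstacle of the proof is the identification of minimal counterexamples in the generalised KPZ structure and the verification of the discrepancies explicitly; the positive part (c) is conceptually a direct adaptation of the KPZ argument of Proposition~\ref{property KPZ}, the only new ingredient being the support condition on $\ell_{gkpz}$ which replaces the rigid rule-based analysis available in the KPZ case.
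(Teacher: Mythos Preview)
Your treatment of (c) starts correctly by invoking Proposition~\ref{prop_PiM}, but the reformulation of the second side is wrong. You write $(\Pi_xM\CI_k\tau')(x)=\int D^kK(x-z)(\PPi F_xM\tau')(z)\,dz$, but $M\tau'$ generally contains components $\sigma_i$ with $|\CI_k\sigma_i|_{\s}>0$ (for instance $M(\CI_1(\Xi)^2)=\CI_1(\Xi)^2+c\,\one$ and $|\CI_1(\one)|_{\s}=1>0$), and for those the Taylor subtraction at the base point does \emph{not} vanish; it is precisely this subtraction that forces the corresponding contributions to be zero. So the two convolutions you write down do not agree, and your proposed induction never gets off the ground. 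The paper's argument is much more direct: one simply lists the finitely many $\CI_k(\tau')$ of negative degree---there are none for $\CI$ and exactly three for $\CI_1$, namely $\CI_1(\Xi)$, $\CI_1(\CI_1(\Xi)^2)$, $\CI_1(\CI(\Xi)\Xi)$---and checks the hypothesis of Proposition~\ref{prop_PiM} on each by hand.

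Your heuristic for the counterexample to (a) is based on a misdiagnosis of the mechanism. You claim the polynomial corrections produced by $\Gamma$ are ``non-trivially seen by $R_{gkpz}$'', but $R_{gkpz}$ commutes with $G$ by Proposition~\ref{commutation_R}; this is in fact one of the defining properties of an admissible map. The failure of commutation comes from $M^{\circ}$, not from $R$: once $M\sigma-\sigma$ contains a genuine planted term (not a polynomial), the character $g\in\CG_+$ evaluates $\CJ_\ell(M\sigma)$ and $\CJ_\ell(\sigma)$ differently, so $\Gamma\CI(M\sigma)$ and $\CI(M\Gamma\sigma)$ differ by a non-zero polynomial. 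The paper exhibits this with the concrete tree $\tau=\CI(\CI(\CI(\Xi)\Xi)\Xi)$: here $M\tau-\tau$ contains $C_1\,\CI(\CI(\Xi))$ with $C_1=\ell_{gkpz}(\CI(\Xi)\Xi)$, and one computes directly that $\Gamma_gM\tau-M\Gamma_g\tau$ equals $C_1$ times $\Gamma_g\CI(\CI(\Xi))-\CI(\Gamma_g\CI(\Xi))$, a non-zero polynomial. The same tree disposes of (b). Without a concrete $\tau$ and without the correct mechanism, your discussion of (a) and (b) is not yet a proof.
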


\begin{proof}
One counterexample, for the properties \ref{prop1} and \ref{prop2} is given by:
 $ \tau = \CI(\CI(\CI(\Xi) \Xi) \Xi) $, we have
\[
 \begin{aligned}
\Gamma_g M_{gkpz} \tau &= 
\Gamma_g \tau - C_1 \Gamma_g \CI(\CI(\Xi)) \\
M_{gkpz} \Gamma_g \tau &= \Gamma_g \tau 
- C_1 \CI( \Gamma_g \CI(\Xi) ),
 \end{aligned}
\]
where $ C_1 = \ell_{gkpz}(\CI(\Xi) \Xi)$.
Now $ \Gamma_g \CI(\CI(\Xi)) - \CI( \Gamma_g \CI(\Xi) )  $ is a polynomial different from zero. Similarly, one can check that $ \Pi_x^M \tau \neq \Pi_x M \tau $.
For the property \ref{prop3}, we use the Proposition~\ref{prop_PiM}. We  need to check that property on every $ \CI(\tau) \in \mathcal{T}_{gkpz} $ with negative degree. Such terms do not exist. Then  
\[ \lbrace \mathcal{I}_{1}(\tau) : \; \tau \in \mathcal{T}_{gkpz} \ \text{and} \ | \mathcal{I}_{1}(\tau) |_{\s} < 0  \rbrace  = \lbrace  \CI_1(\Xi) , \ \CI_1(\CI_1(\Xi)^2), \ \CI_1(\CI(\Xi)\Xi) \rbrace. \]
 and $ \Pi_x^{M} \CI_1(\tau)(x) = (\Pi_x  M \CI_1(\tau))(x) $ for $ \tau \in \lbrace \Xi, \ \CI_1(\Xi)^2, \ \CI(\Xi) \Xi \rbrace $.
\end{proof}
\subsection{The  stochastic quantization}
 The  stochastic quantization is given in dimension $ 3 $ by: 
\[
 \partial_{t} u = \Delta u + u^3 + \xi.
\]
and has been studied in \cite{reg}.
The rule $ \mathcal{R}_{qua} $ for building $ \mathcal{T}_{qua} $ is: 
\begin{equs}
 \mathcal{R}_{qua}(\Xi) = \lbrace () \rbrace, \quad \mathcal{R}_{qua}(\CI) = \lbrace (), (\Xi), (\CI), (\CI,\CI),   
 (\CI,\CI,\CI)  \rbrace.
\end{equs}
The admissible map 
$ R_{qua} $ associated to the stochastic quantization is defined by $ R_{qua} = ( \ell_{qua} \otimes \id) \Deltam_r $ where $ \ell_{qua} $ is non zero for:  
 \begin{equs}
\tau \in \lbrace  \CI( \Xi)^2,  \CI(\Xi)^2 \CI(   \CI(\Xi)^2 ) \rbrace.
  \end{equs}

\begin{proposition} The map $ M=M_{qua} $ satisfies only the property \ref{prop3}.
\end{proposition}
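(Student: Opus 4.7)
The plan is to mirror the structure of the proof just given for the generalised KPZ model: exhibit a single decorated tree that simultaneously disproves~\ref{prop1} and~\ref{prop2}, then invoke Proposition~\ref{prop_PiM} to establish~\ref{prop3} after a short verification of its hypothesis.

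As a counterexample I take the planted tree $\tau = \mathcal{I}(\mathcal{I}(\Xi)^2)\in\mathcal{T}_{qua}$ and set $C_1 = \ell_{qua}(\mathcal{I}(\Xi)^2)$. Since $\tau$ is planted, $R_{qua}\tau = \tau$; on the other hand $R_{qua}\mathcal{I}(\Xi)^2 = \mathcal{I}(\Xi)^2 - C_1\,\one$ because $\mathcal{I}(\Xi)^2$ is the only subforest rooted at its root that lies in the support of $\ell_{qua}$. The recursion~\eqref{e:defM} then yields
\[
M\tau \;=\; \mathcal{I}\bigl(M\mathcal{I}(\Xi)^2\bigr) \;=\; \mathcal{I}\bigl(\mathcal{I}(\Xi)^2 - C_1\,\one\bigr) \;=\; \tau - C_1\,\mathcal{I}(\one).
\]
Since $\Gamma_g \mathcal{I}(\Xi) = \mathcal{I}(\Xi)$ for every $g$ (every $\mathcal{J}_\ell(\Xi)$ vanishes), one has $\Gamma_g\tau = \tau + P(X)$ for a polynomial $P$ invariant under $M$. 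Combining these observations I will obtain
\[
M\Gamma_g\tau - \Gamma_g M\tau \;=\; C_1\bigl(\Gamma_g\mathcal{I}(\one) - \mathcal{I}(\one)\bigr) \;=\; C_1 \sum_{\ell} \frac{X^\ell}{\ell!}\,g(\mathcal{J}_\ell(\one)),
\]
which is a non-zero polynomial for generic $g\in\CG_+$ (the multiindices $\ell$ with $|\ell|_\s < 2$ all contribute), falsifying~\ref{prop1}. A parallel computation on the model side, using the definition of $\Pi_x^M$ and the non-vanishing of $f_x^M$ on $\mathcal{J}_\ell(\one)$, will show that $(\Pi_x^M\tau)(y) \neq (\Pi_x M\tau)(y)$ for $y\neq x$, falsifying~\ref{prop2}.

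For~\ref{prop3}, I apply Proposition~\ref{prop_PiM}, whose hypothesis requires only the identity $(\Pi_x^M \mathcal{I}_k(\sigma))(x) = (\Pi_x M \mathcal{I}_k(\sigma))(x)$ on those planted trees $\mathcal{I}_k(\sigma) \in \mathcal{T}_{qua}$ of strictly negative degree. The plan is to enumerate them using $|\Xi|_\s = -\tfrac52 - \kappa$, $|\mathcal{I}|_\s = 2$, and the pattern constraint $\mathcal{R}_{qua}(\mathcal{I}) = \{(), (\Xi), (\mathcal{I}), (\mathcal{I},\mathcal{I}), (\mathcal{I},\mathcal{I},\mathcal{I})\}$: a short degree count rules out every candidate except $\mathcal{I}(\Xi)$ itself, since any further integrator adds $2$ to the degree, and the rule forbids assembling more than three $\mathcal{I}(\Xi)$ factors inside a single integrator. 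On $\mathcal{I}(\Xi)$ both $R_{qua}$ and $M$ act trivially, and both sides of the hypothesis reduce to $(\Pi_x \mathcal{I}(\Xi))(x)$ directly from the definitions. Proposition~\ref{prop_PiM} then yields~\ref{prop3} on all of $\mathcal{T}_{qua}$.

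The main obstacle I foresee is checking exhaustively that no planted tree other than $\mathcal{I}(\Xi)$ has strictly negative degree once the rule is imposed; this is a short combinatorial argument but needs to cover all admissible children patterns in the subcritical regime of $\Phi^4_3$. A secondary point is the model-level computation for~\ref{prop2}: tracking exactly which components of $f_x^M$ survive on $\mathcal{J}_\ell(\one)$ is slightly delicate, but the algebraic difference just above already pinpoints where the discrepancy arises.
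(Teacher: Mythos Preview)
Your proof follows the same strategy as the paper: give a planted tree on which $M$ produces a correction term whose image under $\Gamma_g$ differs from the term one would obtain by first applying $\Gamma_g$, and then verify the hypothesis of Proposition~\ref{prop_PiM} by listing all negative-degree planted trees in $\mathcal{T}_{qua}$. Your verification of~\ref{prop3} is identical to the paper's, which also finds that $\CI(\Xi)$ is the only such tree.

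The difference lies in the counterexample. You take $\tau=\CI(\CI(\Xi)^2)$, leading to $M\tau=\tau-C_1\CI(\one)$; the paper takes $\tau=\CI(\CI(\Xi)^3)$, leading to $M\tau=\tau-3C_1\CI(\CI(\Xi))$. For~\ref{prop1} both work: in your case the discrepancy $M\Gamma_g\tau-\Gamma_g M\tau = C_1\sum_{|\ell|_\s<2}\tfrac{X^\ell}{\ell!}g(\CJ_\ell(\one))$ is nonzero for generic $g$, exactly as you state. For~\ref{prop2}, however, your tree is more fragile. Carrying out the ``parallel computation'' you sketch gives
\[
(\Pi_x^M\tau-\Pi_x M\tau)(y)\;=\;-C_1\sum_{i=1}^3 (y_i-x_i)\int D^{e_i}K(x-z)\,dz,
\]
which is a purely deterministic expression in the kernel. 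For the natural choice of $K$ (a spatially symmetric truncation of the heat kernel) each $\int D^{e_i}K$ vanishes by oddness, and the discrepancy is identically zero. The paper's choice avoids this: with $\tau=\CI(\CI(\Xi)^3)$ the analogous discrepancy is $-3C_1\sum_i(y_i-x_i)\int D^{e_i}K(x-z)\,\Psi(z)\,dz$ where $\Psi=K*\xi$, and this is generically nonzero because the integrand is weighted by the random field rather than by a constant. So while your plan is sound, to make the argument for~\ref{prop2} robust you should switch to $\CI(\CI(\Xi)^3)$ (or any tree whose correction under $M$ involves a non-polynomial planted tree of positive degree).
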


\begin{proof}
 For the properties \ref{prop1} and \ref{prop2}, a good counterexample is $ \tau = \CI(\CI(\Xi)^3) $.
For the property \ref{prop3}, 
$ \lbrace \mathcal{I}(\tau) : \; \tau \in \mathcal{T}_{qua} \; \text{and} \ | \mathcal{I}(\tau) |_{\s} < 0  \rbrace = \lbrace  \CI(\Xi)  \rbrace. $ Then it is obvious that
$ \Pi_x^M \CI(\Xi) = \Pi_x  M \CI(\Xi) $ .
\end{proof}

\appendix

\section{Alternative recursive proof}
    
    \begin{proof}[of Proposition~\ref{proposition_coassociativity_delta1}]
    We want to prove on $ \Tra_{\rho} $ that:
    \begin{equs}
     (\id \otimes \hat \Delta_1)\hat \Delta_1 = (\hat \Delta_1 \otimes \id)\hat \Delta_1.
    \end{equs}
    Since both maps are multiplicative for the product $ \star $ and the identity obviously holds when applied
    to $\one$, $X_i$ or $\Xi_{\mfl}$, it suffices to verify that it also holds for elements of the
    form $\CI^{\Labhom}_k(\tau)$. For this, note first that $ \hat \Delta_1$ has the following properties. 
    For $\sigma \tau \in \CF_{\rho}$ where $ \tau \in \Trees $ and $ \sigma = \prod_i \mathcal{C}(\tau_i) $, $ \tau_i \in \Trees $, one has by definition
    \begin{equs}
    \hat \Delta_1 \Co(\sigma \tau) & =  \DeltaQ \sigma \Co(\tau)\one = \DeltaQ(\sigma\Co(\tau)) \\ & 
    = (\DeltaQ \sigma)(\Co\otimes \Co)\DeltaQ \tau = (\Co\otimes \Co) \DeltaQ (\sigma\tau)\;.
    \end{equs}
    Furthermore, one has the identity
    \begin{equs}
    \DeltaQ \CI^{\Labhom}_k(\sigma \tau) &=  \DeltaQ \sigma \CI^{\Labhom}_k(\tau) = (\DeltaQ \sigma) \DeltaQ \CI^{\Labhom}_k(\tau) \\ & 
    = (\DeltaQ \sigma)(\CI^{\Labhom}_k \otimes \id + \sum_{\ell \in \N^{d+1}} \frac{X^{\ell}}{\ell !} \Co \otimes \CI^{\Labhom}_{k+ \ell})\DeltaQ \tau \\
    & = (\CI_k^{\Labhom} \otimes \id + \sum_{\ell \in \N^{d+1}} \frac{X^{\ell}}{\ell !} \Co \otimes \CI^{\Labhom}_{k+ \ell})(\DeltaQ \sigma \DeltaQ \tau)\\ &  = (
    \CI^{\Labhom}_k \otimes \id + \sum_{\ell \in \N^{d+1}} \frac{X^{\ell}}{\ell !} \Co \otimes \CI^{\Labhom}_{k+ \ell})( \DeltaQ \sigma\tau)\;.
    \end{equs}
    It follows that for any $\tau \in  \Trees$ one has the identity
    \begin{equs}
    ( \DeltaQ \otimes \id)\DeltaQ \CI^{\Labhom}_k(\tau)
    &= ( \DeltaQ \CI^{\Labhom}_k \otimes \id + \DeltaQ  \sum_{\ell \in \N^{d+1}} \frac{X^{\ell}}{\ell !}  \Co \otimes \CI^{\Labhom}_{k+\ell})\DeltaQ \tau \\
    &= ((\CI^{\Labhom}_k \otimes \id + \sum_{\ell \in \N^{d+1}} \frac{X^{\ell}}{\ell !} \Co \otimes \CI^{\Labhom}_{k+\ell}) \DeltaQ \otimes \id \\ &  + \sum_{\ell,m \in \N^{d+1}} ( \frac{X^{\ell}}{\ell !} \Co \otimes  \frac{X^{m}}{m !} \Co) \DeltaQ  \otimes \CI^{\Labhom}_{k + \ell + m })\DeltaQ \tau \\
    &= (\CI^{\Labhom}_k \otimes \id \otimes \id + \sum_{\ell \in \N^{d+1}} \frac{X^{\ell}}{\ell !} \Co \otimes \CI^{\Labhom}_{k+\ell} \otimes \id \\ & + \sum_{\ell,m \in \N^{d+1}}  \frac{X^{\ell}}{\ell !} \Co\otimes  \frac{X^{m}}{ m!} \Co \otimes \CI^{\Labhom}_{k + \ell + m})( \DeltaQ \otimes \id)\DeltaQ \tau\;.
    \end{equs}
   On the other hand, we have
    \begin{equs}
    (\id \otimes \DeltaQ)\DeltaQ \CI^{\Labhom}_k(\tau)
    & = (\CI^{\Labhom}_k \otimes \id \otimes \id  +\sum_{\ell \in \N^{d+1}} \frac{X^{\ell}}{\ell !} \Co \otimes \CI^{\Labhom}_{k+ \ell} \otimes \id \\ & + \sum_{\ell,m \in \N^{d+1}}  \frac{X^{\ell}}{\ell !} \Co\otimes  \frac{X^{m}}{m !} \Co \otimes \CI^{\Labhom}_{k + \ell + m})(\id \otimes \DeltaQ)\DeltaQ \tau\;,
    \end{equs}
    the claim follows by induction.
    \end{proof}

\endappendix

\bibliographystyle{./Martin}
% Ecrire la biblio ici
\bibliography{refs_recursive_formulas}

\end{document}